\numberwithin{equation}{section}
\numberwithin{figure}{section}
\theoremstyle{plain}
\newtheorem{thm}{\protect\theoremname}
\theoremstyle{remark}
\newtheorem{rem}[thm]{\protect\remarkname}
\theoremstyle{plain}
\newtheorem{prop}[thm]{\protect\propositionname}
\theoremstyle{plain}
\newtheorem{lem}[thm]{\protect\lemmaname}
\theoremstyle{plain}
\newtheorem{cor}[thm]{\protect\corollaryname}
\theoremstyle{definition}
\newtheorem{defn}[thm]{\protect\definitionname}
\theoremstyle{definition}
\newtheorem{example}[thm]{\protect\examplename}
\providecommand{\corollaryname}{Corollary}
\providecommand{\definitionname}{Definition}
\providecommand{\examplename}{Example}
\providecommand{\lemmaname}{Lemma}
\providecommand{\propositionname}{Proposition}
\providecommand{\remarkname}{Remark}
\providecommand{\theoremname}{Theorem}
\begin{document}
\title{On (pre)-approach spaces within convergence approach spaces}
\author{Frédéric Mynard}
\email{fmynard@njcu.edu}
\address{Department of Mathematics, 2039 JF Kennedy Blvd, Jersey City, NJ 07305}
\subjclass[2000]{54A20, 18F99}
\keywords{convergence approach spaces, pre-approach spaces, approach spaces,
closure, closure function, adherence, reflection}

\maketitle
\global\long\def\G{\mathcal{G}}%
 
\global\long\def\F{\mathcal{F}}%
 
\global\long\def\H{\mathcal{H}}%
\global\long\def\Z{\mathcal{Z}}%
 
\global\long\def\L{\mathcal{L}}%
\global\long\def\U{\mathcal{U}}%
\global\long\def\W{\mathcal{W}}%
 
\global\long\def\E{\mathcal{E}}%
\global\long\def\B{\mathcal{B}}%
 
\global\long\def\A{\mathcal{A}}%
\global\long\def\D{\mathcal{D}}%
\global\long\def\O{\mathcal{O}}%
 
\global\long\def\N{\mathcal{N}}%
 
\global\long\def\X{\mathcal{X}}%
 
\global\long\def\lm{\lim\nolimits}%
 
\global\long\def\then{\Longrightarrow}%
\global\long\def\BaseD{\operatorname{B}_{\mathbb{D}}}%

\global\long\def\V{\mathcal{V}}%
\global\long\def\C{\operatorname{C}}%
\global\long\def\R{\operatorname{R}}%
\global\long\def\adh{\operatorname{adh}}%
\global\long\def\Seq{\operatorname{Seq}}%
\global\long\def\intr{\operatorname{int}}%
\global\long\def\cl{\operatorname{cl}}%
\global\long\def\inh{\operatorname{inh}}%
\global\long\def\id{\operatorname{id}}%
\global\long\def\diam{\operatorname{diam}\ }%
\global\long\def\card{\operatorname{card}}%
\global\long\def\T{\operatorname{T}}%
\global\long\def\S{\operatorname{S}}%
\global\long\def\I{\operatorname{I}}%
\global\long\def\AdhD{\operatorname{A}_{\mathbb{D}}}%
\global\long\def\UD{\operatorname{U}_{\mathbb{D}}}%
\global\long\def\K{\operatorname{K}}%
\global\long\def\LD{\operatorname{L}_{\mathbb{D}}}%
\global\long\def\BRD{\operatorname{B}_{\mathcal{R}(\mathbb{D})}}%
\global\long\def\Dis{\operatorname{Dis}}%
\global\long\def\Cha{\operatorname{Cha}}%
\global\long\def\Reg{\operatorname{Reg}}%

\global\long\def\fix{\operatorname{fix}}%
\global\long\def\Epi{\operatorname{Epi}}%
 
\global\long\def\cont{\mathscr{C}}%
\global\long\def\conv{\mathsf{Conv}}%
\global\long\def\prtop{\mathsf{PrTop}}%
\global\long\def\top{\mathsf{Top}}%
\global\long\def\pstop{\mathsf{PsTop}}%
\global\long\def\Cap{\mathsf{Cap}}%
\global\long\def\prap{\mathsf{PrAp}}%
\global\long\def\ap{\mathsf{Ap}}%
\global\long\def\psap{\mathsf{PsAp}}%
\global\long\def\Cconv{\mathsf{C^{Conv}}}%

\begin{abstract}
The purpose of this note is to illustrate a parallel between (pre)topologies
when seen among convergence spaces and (pre)approach spaces when seen
among convergence approach spaces, that appears to be a more complete
parallel than in the traditional presentation of these approach structures.
This sheds some light on the reflector from the category of convergence
approach spaces to that of approach spaces and even on the structure
of approach spaces as such. This point of view allows for a characterization
of approach spaces among convergence approach spaces represented as
pointfree convergence frames as in \cite{myn.ptfreeAP}.
\end{abstract}

\section{Introduction and preliminaries}

The purpose of this note is to illustrate a parallel between (pre)topologies
when seen among convergence spaces and (pre)approach spaces when seen
among convergence approach spaces, that appears to be a more complete
parallel than in the traditional presentation of these approach structures.
This sheds some light on the reflector from the category of convergence
approach spaces to that of approach spaces and even on the structure
of approach spaces as such. To spell this out, let us walk through
some somewhat lengthy preliminaries (because a very important aspect
is a reframing of the traditional presentation of convergence approach
space, we take some pain in doing this reframing in some details,
even for some known results).

\subsection{Convergence spaces}

If $X$ is a set, we denote its powerset by $\mathbb{P}X$ and its
set of finite subsets by $[X]^{<\infty}$. The set of (set-theoretic)
filters on $X$ is denoted $\mathbb{F}X$ and the set of ultrafilters
on $X$ is denoted $\mathbb{U}X$. If $\A\subset\mathbb{P}X$ is a
family of subsets of $X$, we will consider the following associated
families: its \emph{isotone hull}
\[
\A^{\uparrow}=\{B\subset X:\exists A\in\A,A\subset B\},
\]
its \emph{grill
\[
\A^{\#}=\left\{ B\subset X:\forall A\in\A,B\cap A\neq\emptyset\right\} ,
\]
}its \emph{completion by finite intersections
\[
\A^{\cap}=\{\bigcap\B:\B\in[\A]^{<\infty}\}.
\]
}

When $\A=\{A\}$, we abridge $A^{\uparrow}$ for the principal filter
$\{A\}^{\uparrow}$ of $A$. In particular, if $x\in X$ then $\{x\}^{\uparrow}$
denotes the principal ultrafilter generated by $x$. 

If $\A,\B\subset\mathbb{P}X$, we say that $\A$ and $\B$ \emph{mesh},
in symbols $\A\#\B$, if $\A\subset\B^{\#}$, equivalently, $\B\subset\A^{\#}$.
We say that $\A$ \emph{is finer than $\B$ }or that $\B$ \emph{is
coarser than $\A$, }in symbols\emph{ $\A\geq\B$, }if for every $B\in\B$
there is $A\in\A$ with $A\subset B$, that is, if $\B\subset\A^{\uparrow}$.
For every filter $\F$ on $X$, $\beta(\F)=\{\U\in\mathbb{U}X:\U\geq\F\}$
denotes the set of finer ultrafilters. When $\F=A^{\uparrow}$ is
principal, we abridge to $\beta A=\{\U\in\mathbb{U}X:A\in\U\}$.

A \emph{convergence }$\xi$ on a set $X$ is a relation between points
of $X$ and filters on $X$, denoted $x\in\lm_{\xi}\F$ if $(x,\F)\in\xi$
and interpreted as $x$ is a limit point for $\F$ in $\xi$, satisfying
\begin{equation}
\tag{centered}x\in\lm_{\xi}\{x\}^{\uparrow}\label{eq:centeredconv}
\end{equation}
for every $x\in X$ and 
\begin{equation}
\tag{monotone}\F\leq\G\then\lm_{\xi}\F\subset\lm_{\xi}\G,\label{eq:convmontone}
\end{equation}
for all $\F,\G\in\mathbb{F}X$. The pair $(X,\xi)$ is then called
a \emph{convergence space.}

A map $f:X\to Y$ between two convergence spaces $(X,\xi)$ and $(Y,\tau)$
is \emph{continuous}, in symbols $f\in\cont(\xi,\tau)$, if $f(x)\in\lm_{\tau}f[\F]$
whenever $x\in\lm_{\xi}\F$, where $f[\F]=\{f(F):F\in\F\}^{\uparrow}$
is the image filter. Let $\conv$ denote the category of convergence
spaces and continuous maps. If $\xi,\sigma$ are two convergences
on $X$, we say that $\xi$ is \emph{finer than $\sigma$ }or that
$\sigma$ is \emph{coarser than $\xi$ }if the identity map $\id_{X}\in\cont(\xi,\sigma)$,
that is, if $\lim_{\xi}\F\subset\lim_{\sigma}\F$ for every filter
$\F$ on $X$.

If a convergence additionally satisfies
\begin{equation}
\tag{finite depth}\lm_{\xi}(\F\cap\G)=\lm_{\xi}\F\cap\lm_{\xi}\G\label{eq:fintedepth}
\end{equation}
 for every $\F,\G\in\mathbb{F}X$, we say that $\xi$ \emph{has finite
depth} (\footnote{Many authors include (\ref{eq:fintedepth}) in the axioms of a convergence
space. Here we follow \cite{DM.book}.}). A convergence satisfying the stronger condition that 
\begin{equation}
\tag{pretopology}\lm_{\xi}(\bigcap_{\D\in\mathbb{D}}\D)=\bigcap_{\D\in\mathbb{D}}\lm_{\xi}\D\label{eq:pretopdeep}
\end{equation}
 for every $\mathbb{D}\subset\mathbb{F}X$ is called a \emph{pretopology}
(and $(X,\xi)$ is a \emph{pretopological space}). 

A subset $A$ of a convergence space $(X,\xi)$ is \emph{$\xi$-open
}if 
\[
\lm_{\xi}\F\cap A\neq\emptyset\then A\in\F,
\]
and $\xi$-\emph{closed} if it is closed for limits, that is,
\[
A\in\F\then\lm_{\xi}\F\subset A,
\]
equivalently,
\[
A\in\F^{\#}\then\lm_{\xi}\F\subset A.
\]

Let $\O_{\xi}$ denote the set of open subsets of $(X,\xi)$ and let
$\O_{\xi}(x)=\{U\in\O_{\xi}:x\in U\}$. Similarly, let $\mathcal{C}_{\xi}$
denote the set of closed subsets of $(X,\xi)$. It turns out that
$\O_{\xi}$ is a topology on $X$\emph{. }Moreover,\emph{ }a topology
$\tau$ on a set $X$ determines a convergence $\xi_{\tau}$ on $X$
by
\[
x\in\lm_{\xi_{\tau}}\F\iff\F\geq\N_{\tau}(x),
\]
where $\N_{\tau}(x)$ denotes the neighborhood filter of $x$ for
$\tau$. In turn, $\xi_{\tau}$ completely determines $\tau$ because
$\tau=\O_{\xi_{\tau}}$, so that we do not distinguish between $\tau$
and $\xi_{\tau}$ and identify topologies with special convergences.
Moreover, a convergence $\xi$ on $X$ determines the topology $\O_{\xi}$
on $X$ which turns out to be the finest among the topologies on $X$
that are coarser than $\xi$. We call it the \emph{topological modification
of $\xi$ }and denote it $\T\xi$. Hence the category $\top$ of topological
spaces and continuous maps is a concretely reflective subcategory
of $\conv$. Let $\cl_{\xi}$ denote the closure operator in $\T\xi$.

A convergence $\xi$ determines an \emph{adherence} operator on filters,
namely,
\begin{equation}
\adh_{\xi}\F=\bigcup_{\G\#\F}\lm_{\xi}\G=\bigcup_{\G\geq\F}\lm_{\xi}\G=\bigcup_{\U\in\beta(\F)}\lm_{\xi}\U.\label{eq:adherence}
\end{equation}

A convergence $\xi$ on $X$ is a \emph{pseudotopology }if 
\begin{equation}
\tag{pseudotopology}\lm_{\xi}\F=\bigcap_{\U\in\beta(\F)}\lm_{\xi}\U\label{eq:pseudotop}
\end{equation}
 for every filter $\F\in\mathbb{F}X$. The full subcategory of $\conv$
formed by pseudotopological spaces (and continuous maps), denoted
$\pstop$, is concretely reflective and the reflector $\S$ can be
described on objects by
\begin{equation}
\lm_{\S\xi}\F=\bigcap_{\U\in\beta(\F)}\lm_{\xi}\U=\bigcap_{\G\#\F}\adh_{\xi}\G,\label{eq:Sreflector}
\end{equation}
so that adherences determine the pseudotopological reflection.

When restricted to principal filters, (\ref{eq:adherence}) defines
a \emph{principal adherence operator }$\adh_{\xi}:\mathbb{P}X\to\mathbb{P}X$
given by
\[
\adh_{\xi}A=\adh_{\xi}\{A\}^{\uparrow}=\bigcup_{A\in\G^{\#}}\lm_{\xi}\G=\bigcup_{A\in\G}\lm_{\xi}\G=\bigcup_{\U\in\beta A}\lm_{\xi}\U.
\]

The full subcategory of $\conv$ formed by pretopological spaces (and
continuous maps), denoted $\prtop$, is concretely reflective and
the reflector $\S_{0}$ can be described on objects by
\begin{equation}
\lm_{\S_{0}\xi}\F=\bigcap_{A\in\F^{\#}}\adh_{\xi}A,\label{eq:S0adh}
\end{equation}
so that the principal adherence determines the pretopological reflection.
In general,
\[
\adh_{\xi}A\subset\cl_{\xi}A
\]
but not conversely. In contrast to $\cl_{\xi}$, the principal adherence
is in general non-idempotent because $\adh_{\xi}A$ need not be closed.
A pretopology is a topology if and only if its principal adherence
operator is idempotent, in which case $\adh_{\xi}=\cl_{\xi}$. Moreover,
the reflector $\T$ from $\conv$ onto $\top$ is given by
\begin{equation}
\lm_{\T\xi}\F=\bigcap_{A\in\F^{\#}}\cl_{\xi}A.\label{eq:Tclosure}
\end{equation}

We refer the reader to \cite{DM.book} for a systematic study of convergence
spaces.

\subsection{Convergence approach spaces }

Traditionally, a \emph{convergence approach space }$(X,\lambda)$
is given by a function $\lambda:\mathbb{F}X\to[0,\infty]^{X}$ satisfying
\[
\F\leq\G\then\lambda(\F)\geq\lambda(\G)
\]
for the pointwise order of $[0,\infty]^{X}$ induced by the usual
order on $[0,\infty]$, and 
\[
\lambda(\{x\}^{\uparrow})(x)=0,
\]
for every $x\in X$. The interpretation is that $\lambda(\F)(x)$
measures how well $\F$ converges to $x$, with $\lambda(\F)(x)=0$
indicating full convergence while $\lambda(\F)(x)=\infty$ indicating
no convergence. Additionally, an analog of (\ref{eq:fintedepth})
is usually required, namely that
\begin{equation}
\lambda(\F\cap\G)=\lambda(\F)\vee\lambda(\G),\label{eq:capfinitedepth}
\end{equation}
for every $\F,\G\in\mathbb{F}X$, though we will treat it as an additional
property just like in $\conv$. 

A \emph{contraction} is a function $f:(X,\lambda_{X})\to(Y,\lambda_{Y})$
satisfying 
\[
\lambda_{Y}(f[\F])\circ f\leq\lambda_{X}(\F)
\]
 for the pointwise order induced by that of $[0,\infty]$, for every
$\F\in\mathbb{F}X$. 

In order to more easily draw a complete parallel with $\top$, $\prtop$
and $\conv$, it is more convenient to consider the opposite order
on $[0,\infty]$ so that $0$ is the greatest element and $\infty$
is the smallest element. It is then more intuitive to think of the
values of $\lambda(\F)$ taken in a complete lattice $V$ where $0$
and $1$ denote the least and greatest elements, where $\lambda(\F)(x)=1$
means full convergence and $\lambda(\F)(x)=0$ means no convergence,
and for the purpose of this note, we may restrict ourselves to $V=[0,1]$.
We will also use a multiplicative notation $\otimes$ for the operation
on $[0,1]\sim[0,\infty]^{op}$ instead of the additive traditional
operation in $[0,\infty]$. Indeed, $-\ln:([0,1],\times)\to([0,\infty],+)^{op}$
is a quantale isomorphism. Hence, we prefer to reframe the structure
as a monotone map $\lambda:\mathbb{F}X\to V^{X}$ for the pointwise
order, with 
\begin{equation}
\lambda(\{x\}^{\uparrow})(x)=1\label{eq:CAPcentered}
\end{equation}
for all $x\in X$ and morphisms $f:(X,\lambda_{X})\to(Y,\lambda_{Y})$
satisfying 
\[
\lambda_{Y}(f[\F])\circ f\geq\lambda_{X}(\F)
\]
for every $\F\in\mathbb{F}X$. Let $\cont(X,Y)$ denote the set of
all such morphisms. Let $\Cap$ denote the resulting category with
these morphisms and convergence approach spaces as objects.
\begin{rem}
Though several results here can be easily extended to more general
quantales of values $V$, we will assume in this paper that $V=[0,1]$
or $V=[0,\infty]^{op}$ where the quantale operation $\otimes$ is
$\times$ in the former case and $+$ in the latter. The reader interested
in a full fledged theory of convergence for general quantales $V$
is referred to the extensive work of M.M. Clementino, D. Hofmann,
W. Tholen and their co-authors, e.g. \cite{hofmann2014monoidal,lai2017quantale},
though most of this work allows for a unified handling of categories
with a built-in diagonality, like that of topological spaces, that
of approach spaces, that of closure spaces and more, but rarely ventures
into analogs of convergence spaces. On the other hand, G. J\"ager's
work includes extensive studies of various kinds of quantale-valued
convergence spaces, e.g. \cite{jager2012convergence,jager2018quantale,jager2019quantale,jager2021quantale},
though generally at a higher level of generality involving the convergence
of different types of stratified filters. 
\end{rem}

Note that $\Cap$ is a topological category, where initial structures
are as follows: If $f_{i}:X\to(Y_{i},\lambda_{i})$ then the $\Cap$-initial
structure on $X$ is given by 
\begin{equation}
\lambda_{X}(\F)(x)=\bigwedge_{i\in I}\lambda_{i}(f_{i}[\F])(f(x)).\label{eq:initialCAP}
\end{equation}

Mirroring (\ref{eq:adherence}), if $(X,\lambda)$ is a convergence
approach space and $\F\in\mathbb{F}X$ then the \emph{adherence function
of $\F$ }is the function $\adh_{\lambda}\F:X\to V$ (or $\adh\F$
if $\lambda$ is unambiguous) defined by
\[
\adh_{\lambda}\F(\cdot)=\bigvee_{\G\#\F}\lambda(\G)(\cdot)=\bigvee_{\U\in\beta(\G)}\lambda\U(\cdot),
\]
and in particular, if $A\subset X$,
\[
\adh_{\lambda}A(\cdot):=\adh_{\lambda}\{A\}^{\uparrow}(\cdot)=\bigvee_{A\in\F^{\#}}\lambda\F(\cdot)=\bigvee_{\U\in\beta A}\lambda\U(\cdot).
\]

\begin{rem}
Note that in an approach space, $\adh A(\cdot)$ has traditionally
been denoted $\delta(\cdot,A)$ and thought of as the distance from
$A$ (valued in $[0,\infty]$ with $0$ being our $1$). We prefer
to think of it as a measure of adherence, and as measure of closure
when we have an analog of idempotency, which is an essential aspect
of this paper to be clarified in Section \ref{sec:Closure}.
\end{rem}

Analogously to (\ref{eq:pretopdeep}), a convergence approach space
$(X,\lambda)$ is a called a \emph{pre-approach space }(e.g., \cite{Lowen97})
if for any $\mathbb{D}\subset\mathbb{F}X$,
\begin{equation}
\lambda(\bigcap_{\D\in\mathbb{D}}\D)(\cdot)=\bigwedge_{\D\in\mathbb{D}}\lambda\D(\cdot).\tag{PrAp}\label{eq:preAP}
\end{equation}

If (\ref{eq:preAP}) is only true for subsets $\mathbb{D}$ of $\mathbb{U}X$
of the form $\beta(\F)$, that is, if 
\begin{equation}
\tag{PsAp}\lambda(\F)(\cdot)=\bigwedge_{\U\in\beta(\F)}\lambda\U(\cdot)\label{eq:psap}
\end{equation}
for every $\F\in\mathbb{F}X$, then $(X,\lambda)$ is a called a \emph{pseudo-approach
space }(e.g.,\cite{lowe88}), generalizing pseudotopologies (compare
(\ref{eq:pseudotop})).

Note that:
\begin{prop}
\label{prop:preimageadh} If $(X,\lambda_{X})$ and $(Y,\lambda_{Y})$
are two convergence approach spaces and $f\in\cont(X,Y)$ and $\G\in\mathbb{F}Y$
then 
\begin{equation}
\adh_{\lambda_{X}}(f^{-}[\G])\leq\adh_{\lambda_{Y}}\G\circ f.\label{eq:adhcontraction}
\end{equation}

In particular, if $A\subset Y$, 
\begin{equation}
\adh_{\lambda_{X}}(f^{-}A)\leq\adh_{\lambda_{Y}}A\circ f.\label{eq:princadhcontraction}
\end{equation}
\end{prop}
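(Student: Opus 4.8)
The plan is to evaluate both sides of \eqref{eq:adhcontraction} at an arbitrary $x\in X$ and, along $f$, to transport each filter witnessing the adherence on the left-hand side to one witnessing the adherence on the right-hand side; the natural candidate for this transport is the image filter $f[\cdot]$.

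The key preliminary observation is an elementary property of meshing: for every $\H\in\mathbb{F}X$,
\[
\H\#f^{-}[\G]\iff f[\H]\#\G .
\]
Indeed, since the grill of any family is isotone (upward closed) and $f^{-}[\G]$ is generated by $\{f^{-}(G):G\in\G\}$, the relation $\H\#f^{-}[\G]$ unfolds to $f^{-}(G)\cap H\neq\emptyset$ for all $G\in\G$ and $H\in\H$, which says exactly that $f(H)\cap G\neq\emptyset$ for all such $G,H$, i.e.\ that each generator $f(H)$ of $f[\H]$ belongs to $\G^{\#}$; this in turn is $f[\H]\#\G$. Note that if $f^{-}[\G]$ is not a proper filter then no filter meshes with it, so $\adh_{\lambda_{X}}(f^{-}[\G])=\bigvee\emptyset$ is the least element of $V^{X}$ and \eqref{eq:adhcontraction} holds trivially; hence we may assume $f^{-}[\G]\in\mathbb{F}X$.

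Fixing $x\in X$ and combining the defining formula $\adh_{\lambda}\F(\cdot)=\bigvee_{\H\#\F}\lambda(\H)(\cdot)$ with the contraction inequality $\lambda_{Y}(f[\H])\circ f\geq\lambda_{X}(\H)$, we then get
\begin{align*}
\adh_{\lambda_{X}}(f^{-}[\G])(x) & =\bigvee_{\H\#f^{-}[\G]}\lambda_{X}(\H)(x)\\
 & \leq\bigvee_{\H\#f^{-}[\G]}\lambda_{Y}(f[\H])(f(x))\\
 & \leq\bigvee_{\mathcal{K}\#\G}\lambda_{Y}(\mathcal{K})(f(x))=\adh_{\lambda_{Y}}\G(f(x)),
\end{align*}
where the second inequality is justified by the meshing observation, which shows that $\H\mapsto f[\H]$ sends $\{\H:\H\#f^{-}[\G]\}$ into $\{\mathcal{K}:\mathcal{K}\#\G\}$. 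This establishes \eqref{eq:adhcontraction}, and \eqref{eq:princadhcontraction} is the special case $\G=\{A\}^{\uparrow}$, for which $f^{-}[\G]=\{f^{-}(A)\}^{\uparrow}$. I do not anticipate any real obstacle here: the argument is essentially an unwinding of definitions, the only points calling for a little care being the isotonicity bookkeeping in the meshing observation and the degenerate case where $f^{-}[\G]$ fails to be a proper filter.
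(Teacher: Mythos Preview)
Your proof is correct and follows essentially the same approach as the paper's: transport along $f[\cdot]$ and apply the contraction inequality. The only cosmetic difference is that the paper indexes the adherence over $\U\in\beta(f^{-}[\G])$ and uses the inclusion $f[\U]\in\beta(\G)$, whereas you index over $\H\#f^{-}[\G]$ and use the equivalence $\H\#f^{-}[\G]\iff f[\H]\#\G$; these are two formulations of the same argument via the identity $\adh\F=\bigvee_{\G\#\F}\lambda(\G)=\bigvee_{\U\in\beta(\F)}\lambda(\U)$.
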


\begin{proof}
We have
\[
\adh_{\lambda_{X}}(f^{-}[\G])(x)=\bigvee_{\U\in\beta(f^{-}[\G])}\lambda_{X}\U(x)
\]
 and $\lambda_{X}(\U)(x)\leq\lambda_{Y}(f[\U])(f(x))$ because $f\in\cont(X,Y)$,
with 
\[
f[\U]\in\beta(f(f^{-}[\G]))\subset\beta(\G),
\]
 so that 
\[
\adh_{\lambda_{X}}(f^{-}[\G])(x)\leq\bigvee_{\U\in\beta(f^{-}[\G])}\lambda_{Y}(f[\U])(f(x))\leq\bigvee_{\W\in\beta(\G)}\lambda_{Y}\W(f(x))=\adh_{\lambda_{Y}}\G(f(x)).
\]
\end{proof}
\begin{thm}
\label{thm:praprefl}\cite[Theorem 1 and Proposition 2]{mynardmeasureCAP}
\emph{(}\footnote{Recall that we assume $V=([0,1],\times)$ or equivalently $([0,\infty]^{op},+)$.}\emph{)}
A convergence approach space $(X,\lambda)$ is a pre-approach space
if and only if 
\begin{equation}
\lambda(\F)(\cdot)=\bigwedge_{A\in\F^{\#}}\adh_{\lambda}A(\cdot)\label{eq:preAPadh-1}
\end{equation}
for every $\F\in\mathbb{F}X$. More generally, given a convergence
approach space $(X,\lambda)$, the map
\[
\lambda_{\S_{0}}(\F)=\bigwedge_{A\in\F^{\#}}\adh_{\lambda}A
\]
defines the pre-approach reflection of $(X,\lambda)$ and 
\begin{equation}
\adh_{\lambda}A=\adh_{\lambda_{\S_{0}}}A\label{eq:sameadhS0}
\end{equation}
for every $A\subset X$.
\end{thm}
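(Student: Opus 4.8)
### Proof plan

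The plan is to mirror the classical argument for the pretopological reflector $\S_0$ in $\conv$ (formula (\ref{eq:S0adh})), adapting it to the $V$-valued setting. First I would verify that $\lambda_{\S_0}$ as defined is actually a convergence approach structure: monotonicity in $\F$ is immediate since $\F\le\G$ gives $\G^{\#}\subset\F^{\#}$, hence the infimum over $\F^{\#}$ is no larger; and (\ref{eq:CAPcentered}) holds because $\{x\}\in(\{x\}^{\uparrow})^{\#}$ and $\adh_\lambda\{x\}(x)\ge\lambda(\{x\}^{\uparrow})(x)=1$ while trivially $\adh_\lambda A(x)\le 1$ for every $A$, so the infimum equals $1$. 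I would also note $\lambda_{\S_0}\ge\lambda$, i.e. $\id_X\in\cont((X,\lambda),(X,\lambda_{\S_0}))$, since for each $A\in\F^{\#}$ we have $\adh_\lambda A\ge\lambda(\F)$ by definition of the adherence (taking $\G=\F$ among the filters with $A\in\G^{\#}$, as $A\in\F^{\#}$ means $\F$ is among those), so the infimum dominates $\lambda(\F)$.

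Next I would show $\lambda_{\S_0}$ is a pre-approach structure, i.e. satisfies (\ref{eq:preAP}). Given $\mathbb{D}\subset\mathbb{F}X$ and writing $\F=\bigcap_{\D\in\mathbb{D}}\D$, the key combinatorial fact is that $A\in\F^{\#}$ iff $A\in\D^{\#}$ for some $\D\in\mathbb{D}$ — this is the standard identity $(\bigcap_{\D}\D)^{\#}=\bigcup_{\D}\D^{\#}$ for the grill of an intersection of filters. Consequently $\lambda_{\S_0}(\F)=\bigwedge_{A\in\F^{\#}}\adh_\lambda A=\bigwedge_{\D\in\mathbb{D}}\bigwedge_{A\in\D^{\#}}\adh_\lambda A=\bigwedge_{\D\in\mathbb{D}}\lambda_{\S_0}(\D)$, which is exactly (\ref{eq:preAP}). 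Then I would establish the universal property: if $(Y,\mu)$ is any pre-approach space and $f\in\cont((X,\lambda),(Y,\mu))$, I must check $f\in\cont((X,\lambda_{\S_0}),(Y,\mu))$, i.e. $\mu(f[\F])(f(x))\ge\lambda_{\S_0}(\F)(x)$ for all $\F,x$. Using the characterization (\ref{eq:preAPadh-1}) for $\mu$ (valid since $(Y,\mu)$ is pre-approach), $\mu(f[\F])(f(x))=\bigwedge_{B\in(f[\F])^{\#}}\adh_\mu B(f(x))$; for each such $B$, $f^{-}B\in\F^{\#}$ (since $B\in(f[\F])^{\#}$ forces $f^{-}B$ to mesh with $\F$), so $\adh_\lambda(f^{-}B)\le\adh_\mu B\circ f$ by (\ref{eq:princadhcontraction}) of Proposition \ref{prop:preimageadh}, and $\lambda_{\S_0}(\F)(x)\le\adh_\lambda(f^{-}B)(x)\le\adh_\mu B(f(x))$. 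Taking the infimum over $B$ gives the desired inequality. Combined with the first paragraph, this shows $\lambda_{\S_0}$ is the pre-approach reflection, which in particular yields the characterization (\ref{eq:preAPadh-1}): a pre-approach space satisfies $\lambda=\lambda_{\S_0}$ because the reflection of an object already in the subcategory is itself (and conversely, anything of the form $\lambda_{\S_0}$ is pre-approach by the above).

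Finally, for (\ref{eq:sameadhS0}), I would show $\adh_{\lambda}A=\adh_{\lambda_{\S_0}}A$ for every $A\subset X$. The inequality $\adh_{\lambda}A\le\adh_{\lambda_{\S_0}}A$ follows from $\lambda\le\lambda_{\S_0}$ pointwise: $\adh_{\lambda}A=\bigvee_{\U\in\beta A}\lambda\U\le\bigvee_{\U\in\beta A}\lambda_{\S_0}\U=\adh_{\lambda_{\S_0}}A$. For the reverse, fix $x$ and an ultrafilter $\U\in\beta A$; then $\lambda_{\S_0}(\U)(x)=\bigwedge_{C\in\U^{\#}}\adh_\lambda C(x)$, and since $\U$ is an ultrafilter, $\U^{\#}=\U$, so $A\in\U\subset\U^{\#}$ and therefore $\lambda_{\S_0}(\U)(x)\le\adh_\lambda A(x)$; taking the supremum over $\U\in\beta A$ yields $\adh_{\lambda_{\S_0}}A(x)\le\adh_\lambda A(x)$.

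The step I expect to be the main obstacle is the universal property in the second paragraph — specifically, making sure the interaction between the grill condition $B\in(f[\F])^{\#}$, the preimage $f^{-}B$, and the contraction inequality for adherences lines up correctly, and that invoking (\ref{eq:preAPadh-1}) for $\mu$ is legitimate (it is, since that equivalence is part of the \emph{hypothesis-free} statement of this very theorem applied to pre-approach spaces, so there is no circularity provided I prove the "if and only if" and the reflector formula together rather than in sequence). Everything else is a routine transcription of the $\conv$ argument into $V$-valued language, using that $\bigwedge$ and $\bigvee$ in $[0,1]$ behave like the set-theoretic $\bigcap$ and $\bigcup$ used in (\ref{eq:S0adh}).
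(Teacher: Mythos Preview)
The paper does not actually prove this theorem: it is quoted from \cite[Theorem 1 and Proposition 2]{mynardmeasureCAP} and no proof is supplied in the present paper, so there is nothing to compare against on that front. Your overall strategy is the natural one and the individual computations (monotonicity, centeredness, the grill identity $(\bigcap_{\D}\D)^{\#}=\bigcup_{\D}\D^{\#}$, the argument for $\adh_{\lambda}=\adh_{\lambda_{\S_0}}$) are all fine.

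There is, however, a genuine circularity in your plan that your parenthetical remark does not dissolve. In the universal-property step you invoke the identity $\mu(f[\F])(f(x))=\bigwedge_{B\in (f[\F])^{\#}}\adh_{\mu}B(f(x))$ for an arbitrary pre-approach target $(Y,\mu)$; this is precisely the ``only if'' half of (\ref{eq:preAPadh-1}), which you then propose to \emph{deduce} from the reflector property (``a pre-approach space satisfies $\lambda=\lambda_{\S_0}$ because the reflection of an object already in the subcategory is itself''). But the reflector property was established using that very identity for $\mu$, so the argument loops. Proving ``if and only if and the reflector together'' does not help unless one of the implications is established independently.

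The clean fix is to prove the ``only if'' direction directly, before touching the reflector. Assume $\mu$ is pre-approach, fix $x$ and $\G$, and let $v<\bigwedge_{B\#\G}\adh_{\mu}B(x)$. For each $B\#\G$ there is $\H_B$ with $B\in\H_B^{\#}$ and $\mu(\H_B)(x)>v$; set $\V=\bigcap\{\H:\mu(\H)(x)>v\}$. By (\ref{eq:preAP}) one has $\mu(\V)(x)\ge v$, and since each $\H_B\ge\V$ we get $B\in\H_B^{\#}\subset\V^{\#}$ for all $B\#\G$, hence $\G^{\#}\subset\V^{\#}$, so $\V\le\G$ and $\mu(\G)(x)\ge\mu(\V)(x)\ge v$. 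Letting $v\nearrow\bigwedge_{B\#\G}\adh_{\mu}B(x)$ gives the missing inequality. With this in hand, your universal-property argument via Proposition~\ref{prop:preimageadh} goes through cleanly and the rest of your outline is correct.
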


Theorem \ref{thm:praprefl} allows for a converse of sort (which I
believe to be folklore, but that I prove as I do not have a reference
for it) to Proposition \ref{prop:preimageadh}:
\begin{prop}
Let $(Y,\lambda_{Y})$ be a convergence approach space and $f:X\to Y$.
If $(Y,\lambda_{Y})$ is a pseudo-approach space and \emph{(\ref{eq:adhcontraction})}
for every $\G\in\mathbb{F}Y$ then $f\in\cont(X,Y).$ If $(Y,\lambda_{Y})$
is a pre-approach space and \emph{(\ref{eq:princadhcontraction})}
for every $A\subset Y$, then $f\in\cont(X,Y)$.
\end{prop}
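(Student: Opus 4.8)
The plan is to establish the contraction inequality $\lambda_{X}(\F)(x)\le\lambda_{Y}(f[\F])(f(x))$ for every proper filter $\F$ on $X$ and every $x\in X$, expanding the right-hand side through whichever adherence formula is available: (\ref{eq:psap}) in the pseudo-approach case and Theorem \ref{thm:praprefl} in the pre-approach case. In both cases one restricts harmlessly to proper filters $\F$, so that $f[\F]$ is proper as well.

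\emph{Pseudo-approach case.} Assume that (\ref{eq:adhcontraction}) holds for all $\G\in\mathbb{F}Y$ and that $(Y,\lambda_{Y})$ is a pseudo-approach space. By (\ref{eq:psap}), $\lambda_{Y}(f[\F])(f(x))=\bigwedge_{\W\in\beta(f[\F])}\lambda_{Y}(\W)(f(x))$, so it is enough to prove that $\lambda_{X}(\F)(x)\le\lambda_{Y}(\W)(f(x))$ for a fixed ultrafilter $\W\ge f[\F]$. The key observation is that $f^{-}[\W]$ is then a genuine filter on $X$ that meshes with $\F$. Indeed, since $\F$ is proper we have $f(X)\in f[\F]\subseteq\W$, so for each $W\in\W$ the set $W\cap f(X)$ is a nonempty member of $\W$ and hence $f^{-}(W)\neq\emptyset$; and for $F\in\F$, $W\in\W$ we have $f(F)\in\W$, so $f(F)\cap W\neq\emptyset$, which is equivalent to $F\cap f^{-}(W)\neq\emptyset$. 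Thus $\F$ is one of the filters meshing with $f^{-}[\W]$, giving $\lambda_{X}(\F)(x)\le\adh_{\lambda_{X}}(f^{-}[\W])(x)$, and (\ref{eq:adhcontraction}) applied to $\G=\W$ yields $\adh_{\lambda_{X}}(f^{-}[\W])(x)\le\adh_{\lambda_{Y}}(\W)(f(x))$. Since $\W$ is an ultrafilter, $\beta(\W)=\{\W\}$, whence $\adh_{\lambda_{Y}}(\W)=\lambda_{Y}(\W)$; taking the infimum over $\W\in\beta(f[\F])$ completes this half.

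\emph{Pre-approach case.} Assume that (\ref{eq:princadhcontraction}) holds for all $A\subseteq Y$ and that $(Y,\lambda_{Y})$ is a pre-approach space. By Theorem \ref{thm:praprefl}, $\lambda_{Y}(f[\F])(f(x))=\bigwedge_{B\in(f[\F])^{\#}}\adh_{\lambda_{Y}}(B)(f(x))$, so I would fix $B\in(f[\F])^{\#}$ and prove $\lambda_{X}(\F)(x)\le\adh_{\lambda_{Y}}(B)(f(x))$. The condition $B\cap f(F)\neq\emptyset$ for all $F\in\F$ reads $f^{-}(B)\cap F\neq\emptyset$ for all $F\in\F$, i.e. $f^{-}(B)\in\F^{\#}$, so $\F$ meshes with the principal filter of $f^{-}(B)$ and therefore $\lambda_{X}(\F)(x)\le\adh_{\lambda_{X}}(f^{-}(B))(x)$. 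Now (\ref{eq:princadhcontraction}) gives $\adh_{\lambda_{X}}(f^{-}(B))(x)\le\adh_{\lambda_{Y}}(B)(f(x))$, and taking the infimum over $B\in(f[\F])^{\#}$ concludes.

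I expect the only genuinely delicate point to be the verification, in the pseudo-approach half, that $f^{-}[\W]$ is a proper filter meshing with $\F$ whenever $\W$ is an ultrafilter finer than $f[\F]$ — this is precisely where properness of $\F$ (hence $f(X)\in\W$) enters. Everything else is a routine chaining of the definition of the adherence function with the two cited reflection formulas.
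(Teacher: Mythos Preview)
Your proof is correct and follows essentially the same approach as the paper: the pre-approach half is identical, and the pseudo-approach half differs only in that you index over ultrafilters $\W\in\beta(f[\F])$ on $Y$ and pull back, whereas the paper indexes over ultrafilters $\U\in\beta(\F)$ on $X$ and pushes forward via $\G=f[\U]$. Both variants reduce the contraction inequality to the adherence hypothesis on an ultrafilter and then invoke (\ref{eq:psap}); your explicit check that $f^{-}[\W]$ is a proper filter meshing with $\F$ is a detail the paper leaves implicit.
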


\begin{proof}
If (\ref{eq:adhcontraction}) for every $\G\in\mathbb{F}Y$, then
in particular, for every ultrafilter $\U$ on $X$, $\G=f[\U]\in\mathbb{U}Y$
so that 
\[
\adh_{\lambda_{X}}(f^{-}[\G])=\adh_{\lambda_{X}}(f^{-}[f[\U]])\leq\adh_{\lambda_{Y}}f[\U]\circ f
\]
and $\adh_{\lambda_{Y}}f[\U]=\lambda_{Y}(f[\U])$ because $f[\U]\in\mathbb{U}Y$.
Moreover, $f^{-}[f[\U]]\leq\U$ so that 
\[
\lambda_{X}\U\leq\adh_{\lambda_{X}}(f^{-}[f[\U]])\leq\lambda_{Y}(f[\U])\circ f
\]
for every ultrafilter $\U$ on $X$. If $\F\in\mathbb{F}X$, 
\[
\lambda_{X}(\F)\leq\bigwedge_{\U\in\beta(\F)}\lambda_{X}(\U)\leq\bigwedge_{\U\in\beta(\F)}\lambda_{Y}(f[\U])\circ f=\lambda_{Y}\Big(\bigwedge_{\U\in\beta(\F)}f[\U]\Big)\circ f=\lambda_{Y}(f[\F])\circ f,
\]
where the first equality follows from (\ref{eq:psap}) in $Y$.

Now, assume $(Y,\lambda_{Y})$ is a pre-approach space and (\ref{eq:princadhcontraction})
for every $A\subset Y$, and let $\F\in\mathbb{F}X$. Since $f^{-}(A)\#\F$
for every $A\#f[\F]$, 
\[
\lambda_{X}(\F)\leq\adh_{\lambda_{X}}(f^{-}A)\leq\adh_{\lambda_{Y}}A\circ f.
\]
Hence, 
\[
\lambda_{X}(\F)\leq\bigwedge_{A\#f[\F]}\adh_{\lambda_{Y}}A\circ f=\lambda_{Y}(f[\F])\circ f
\]
because of (\ref{eq:preAPadh-1}) in $Y$.
\end{proof}
\begin{rem}
In $(V,\otimes)$, for each $v\in V$, the map $v\otimes-:V\to V$
preserves all suprema, hence has a right adjoint $-\oslash v:V\to V$
defined by 
\begin{equation}
v\otimes x\leq y\iff x\leq y\oslash v,\label{eq:adjointtensor-1}
\end{equation}
which preserves all infima. In $V=([0,1],\times)$, 
\begin{equation}
t\oslash x=\begin{cases}
\frac{t}{x}\wedge1 & x\neq0\\
1 & x=0
\end{cases},\label{eq:divin01-1}
\end{equation}
while in $V=([0,\infty]^{op},+)$, $t\oslash x=(t-x)\wedge0$ (note
that here $\wedge$ is the maximum for the usual order of $[0,\infty]$).

Note that (\ref{eq:adjointtensor-1}) yields
\begin{equation}
v\oslash\bigvee_{a\in A}a=\bigwedge_{a\in A}(v\oslash a),\label{eq:oslahofsup}
\end{equation}
for every $v\in V$ and $A\subset V$ and 
\begin{equation}
(v\oslash x)\oslash(v\oslash t)\geq t\oslash x,\label{eq:doublediv}
\end{equation}
for every $t,v,x\in V$.
\end{rem}

Recall that as $V$ is completely distributive, if $\F\in\mathbb{F}X$
and $\phi:X\to V$ then
\begin{equation}
\bigvee_{A\in\F^{\#}}\bigwedge_{x\in A}\phi(x)=\bigwedge_{F\in\F}\bigvee_{x\in F}\phi(x),\label{eq:CD}
\end{equation}
so that in particular, for an ultrafilter $\U\in\mathbb{U}X$, we
have 
\begin{equation}
\bigvee_{U\in\U}\bigwedge_{x\in U}\phi(x)=\bigwedge_{U\in\U}\bigvee_{x\in U}\phi(x).\label{eq:CDultra}
\end{equation}

We will also use the notations
\[
\liminf_{\F}\phi=\bigvee_{F\in\F}\bigwedge_{x\in F}\phi(x)
\]
and 
\[
\limsup_{\F}\phi=\bigwedge_{F\in\F}\bigvee_{x\in F}\phi(x),
\]
where $\F$ is a family of subsets of $X$ and $\phi:X\to V$.

Note that 
\begin{equation}
\F\subset\G\then\liminf_{\F}\phi\leq\liminf_{\G}\phi\label{eq:liminfmonotone}
\end{equation}
and 
\[
\F\subset\G\then\limsup_{\G}\phi\leq\limsup_{\F}\phi.
\]

If $\star$ is a binary operation on $V$ and $f,g\in V^{X}$ then
$f\star g$ is defined pointwise by 
\[
(f\star g)(x)=f(x)\star g(x).
\]
If $g(x)=v$ for all $x$, we write $f\star v$ for $f\star g$. 

For our $V$ the operation $\otimes$ preserves both infima and suprema
and $\oslash$ does as well in its first variable, so that
\begin{equation}
(\limsup_{\F}\phi)\star v=\limsup_{\F}(\phi\star v)\label{eq:limsupstarv}
\end{equation}
and 
\begin{equation}
(\liminf_{\F}\phi)\star v=\liminf_{\F}(\phi\star v)\label{eq:liminfstarv}
\end{equation}
 for these operations. 

If $A\subset X$, where $(X,\lambda)$ is a convergence approach space
and $\epsilon\in V$, let 
\[
A^{(\epsilon)}:=\left\{ x\in X:\adh A(x)\geq\epsilon\right\} =(\adh A)^{-1}[\epsilon,1].
\]
 A pre-approach space $(X,\lambda)$ is an \emph{approach space }(e.g,
\cite{Lowen89,AP.book,indextheory}) if 
\begin{equation}
\adh A(x)\geq\adh A^{(\epsilon)}(x)\otimes\epsilon\label{eq:diagonaladh}
\end{equation}
for every $A\subset X$, every $x\in X$ and every $\epsilon\in V$
(\footnote{In its traditional form valued in $([0,\infty],+)$, 
\[
\delta(A,x)\leq\delta(A^{(\epsilon)},x)+\epsilon.
\]
}).

Let $\psap$, $\prap$ and $\ap$ denote the full subcategories of
$\Cap$ of pseudo-approach, pre-approach and approach spaces respectively.

Recall as well (e.g., \cite{Lowen88}) that $c:\Cap\rightarrow\conv$
and $r:\Cap\rightarrow\conv$ defined on objects by 
\begin{eqnarray*}
x & \in & \lim\nolimits_{c(\lambda)}\mathcal{F\Longleftrightarrow\lambda F}(x)=1\\
x & \in & \lim\nolimits_{r(\lambda)}\mathcal{F\Longleftrightarrow\lambda F}(x)>0
\end{eqnarray*}
extend to a concrete coreflector and a concrete reflector respectively.
The functors $c$ and $r$ restrict to coreflectors and reflectors
from $\psap$ to $\pstop$, $\mathbf{\prap}$ to $\prtop$ and $\ap$
to $\top$.

\subsection{Main results}

There is a well known approach structure on $V$, which, though classical,
we revisit in some details in Section \ref{subsec:The-approach-structure}
below. A key observation (Theorem \ref{prop:adhcont}) is that for
this structure (\ref{eq:diagonaladh}) is equivalent to $\adh A\in\cont(X,V)$.
A second observation (Section \ref{sec:Closure}) is that the $\Cap$-analog
of the closed subsets of a convergence space are elements of $\cont(X,V)$
and that $\cont:V^{X}\to\cont(X,V)$ where $\cont(f)$ is the smallest
element of $\cont(X,V)$ larger than $f$ is the lower-hull operator
associated with $(X,\lambda)$ and determines the approach reflection
of $(X,\lambda)$. In fact, special elements suffice: it turns out
that if $\theta_{A}:X\to V$ defined by $\theta_{A}(x)=1$ if $x\in A$
and $\theta_{A}(x)=0$ otherwise denotes the \emph{indicator function
of $A$}, then the functions of the form $\cont(\theta_{A})=\cont(\adh_{A})$
play the role of closure. With the notation $\cl A=\cont(\theta_{A})$,
we obtain a $\Cap$-analog (Theorem \ref{thm:approachreflection})
of (\ref{eq:Tclosure}).

As $\cont(X,V)$ is the natural $\Cap$-analog of $\mathcal{C}_{\xi}$,
$\Cap$ and $\ap$ generalizations of hyperspace theory should be
carried out on this set, rather than on the set $CL(X)$ of closed
subsets of the $\top$-reflection or $\top$-coreflection of an approach
space $(X,\lambda)$, as it has traditionally been done. As $\theta_{A}\in\cont(X,V)$
if and only $A$ is $r(\lambda)$-closed (Theorem \ref{thm:adhcont}),
$CL(X)$ should be chosen to be the set of closed subsets in the $\top$-reflection
of $(X,\lambda)$ so that $CL(X)$ can naturally be identified with
a subset of $\cont(X,V)$. Hence, $\Cap$ and $\ap$ structures on
$\cont(X,V)$--the natural hyperstructures in $\Cap$--also induce
$\Cap$ structures on $CL(X)$ and traditional results using approach
structures in the study of hyperspaces can be recovered. This is the
subject of forthcoming research.

On the other hand, this point of view on approach spaces provides
a natural way to characterize those convergence frames $(V^{X},\lim)$
(in the sense of \cite{FredetJean}) that are images of an approach
space under the functor $V^{(-)}:\Cap\to(\Cconv)^{op}$ introduced
in \cite{myn.ptfreeAP}, thus answering (via Theorem \ref{thm:VAp})
a question left in \cite{myn.ptfreeAP}.

\subsection{The approach structure on $V$\protect\label{subsec:The-approach-structure}}

$V$ can be turned into a convergence approach space (in fact, an
approach space) in a standard way (See e.g., \cite[Example 1.8.33]{AP.book},
\cite[p. 64]{indextheory}) by defining
\begin{eqnarray}
\lambda_{V}(\F)(v) & = & v\oslash(\bigvee_{A\in\F^{\#}}\bigwedge A)=v\oslash\liminf_{\F^{\#}}\id_{V}\label{eq:lambdaV1}
\end{eqnarray}
for every $\F\in\mathbb{F}V$ and $v\in V$. Note that in particular,
$\lambda_{V}(\F)(v)=1$ whenever $v\geq\bigvee_{A\in\F^{\#}}\bigwedge A$
because $v\oslash t=1$ if and only if $v\geq t$. Additionally
\begin{equation}
\lambda_{V}(\F)(v)=\bigwedge_{A\in\F^{\#}}v\oslash\bigwedge A,\label{eq:lambdaV}
\end{equation}
by (\ref{eq:oslahofsup}). Since $(\bigcap_{\D\in\mathbb{D}}\D)^{\#}=\bigcup_{\D\in\mathbb{D}}\D^{\#}$,
we conclude that 
\[
\lambda_{V}(\bigcap_{\D\in\mathbb{D}}\D)(v)=\bigwedge_{\D\in\mathbb{D}}\bigwedge_{A\in\D^{\#}}v\oslash\bigwedge A=\bigwedge_{\D\in\mathbb{D}}\lambda_{V}\D(v),
\]
so that $\lambda_{V}$ is a pre-approach limit. This is actually an
approach limit, as is well known. We will nevertheless provide proofs
in order to build familiarity with $(V,\lambda_{V})$. 

For $V$ completely distributive (such as $V=[0,1]$),
\begin{eqnarray*}
\lambda_{V}(\F)(v) & = & v\oslash(\bigvee_{A\in\F^{\#}}\bigwedge A)=\bigwedge_{A\in\F^{\#}}v\oslash\bigwedge A\\
 & = & v\oslash(\bigwedge_{F\in\F}\bigvee F)=v\oslash\limsup_{\F}\id_{V}.
\end{eqnarray*}

\begin{lem}
\label{lem:adhinV} Let $v\in V$ and $A$ be a non-empty subset of
$V$. Then 
\[
\adh_{V}A(v)=v\oslash\bigwedge A.
\]
\end{lem}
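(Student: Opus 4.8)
The plan is to compute $\adh_V A(v)$ directly from its definition as a supremum over filters (equivalently ultrafilters) meshing with the principal filter $A^{\uparrow}$, and then recognize the resulting expression as $v \oslash \bigwedge A$ using the explicit formula \eqref{eq:lambdaV} for $\lambda_V$. Recall that
\[
\adh_V A(v) = \bigvee_{A \in \G^{\#}} \lambda_V \G(v) = \bigvee_{\U \in \beta A} \lambda_V \U(v),
\]
so the task is to evaluate $\bigvee_{\U \in \beta A} \lambda_V(\U)(v)$. By \eqref{eq:lambdaV}, for each ultrafilter $\U$ on $V$ we have $\lambda_V(\U)(v) = \bigwedge_{B \in \U^{\#}} v \oslash \bigwedge B$, and since $\U$ is an ultrafilter, $\U^{\#} = \U$, so $\lambda_V(\U)(v) = \bigwedge_{B \in \U} v \oslash \bigwedge B$.

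First I would prove the inequality $\adh_V A(v) \le v \oslash \bigwedge A$. For any $\U \in \beta A$ we have $A \in \U$, hence $A$ is one of the sets $B$ in the infimum defining $\lambda_V(\U)(v)$, so $\lambda_V(\U)(v) \le v \oslash \bigwedge A$; taking the supremum over $\U \in \beta A$ gives the bound. (Alternatively, this follows from the principal-adherence expression $\bigvee_{A \in \G^{\#}} \lambda_V(\G)(v)$ together with $\A \subset \A^{\#}$ for a filter, giving $\lambda_V(\G)(v) \le v \oslash \bigwedge A$ whenever $A \in \G^{\#}$.)

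For the reverse inequality $v \oslash \bigwedge A \le \adh_V A(v)$, I would exhibit an ultrafilter witnessing the bound. Since $A$ is non-empty, pick any point $a \in A$ and consider $\U = \{a\}^{\uparrow}$, the principal ultrafilter at $a$; then $a \in A$ means $A \in \U$, so $\U \in \beta A$. Now $\lambda_V(\U)(v) = \bigwedge_{B \in \U} v \oslash \bigwedge B = v \oslash a$, since the smallest member of $\{a\}^{\uparrow}$ is $\{a\}$ and $\bigwedge\{a\} = a$, and $v \oslash -$ is order-reversing so the infimum over $B \ni a$ is attained at $B = \{a\}$. Hence $\adh_V A(v) \ge \bigvee_{a \in A} (v \oslash a) = v \oslash \bigwedge_{a \in A} a = v \oslash \bigwedge A$, where the last equality is \eqref{eq:oslahofsup}. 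Combining the two inequalities yields the claim.

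The main obstacle, such as it is, is bookkeeping about grills versus ultrafilters: one must be careful that $\adh_V A$ can be computed over $\beta A$ rather than over all filters $\G$ with $A \in \G^{\#}$, but this is exactly the definition recalled in the excerpt. A secondary point to verify cleanly is that $v \oslash -$ being order-reversing lets the infimum $\bigwedge_{B \in \{a\}^{\uparrow}} v \oslash \bigwedge B$ collapse to $v \oslash a$; this is immediate since $\{a\} \subset B$ forces $a = \bigwedge\{a\} \ge \bigwedge B$ for all such $B$, wait—rather $\bigwedge B \le a$, so $v \oslash \bigwedge B \ge v \oslash a$, and equality holds at $B = \{a\}$. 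Everything else is a routine application of \eqref{eq:lambdaV} and the adjunction identity \eqref{eq:oslahofsup}, so I do not anticipate any real difficulty.
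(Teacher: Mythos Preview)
Your proof is correct, and for the upper bound $\adh_V A(v)\le v\oslash\bigwedge A$ it coincides with the paper's argument. For the lower bound, however, you take a genuinely different and more elementary route. The paper argues by contradiction: assuming $\adh_V A(v)<v\oslash\bigwedge A$, it extracts for each $\U\in\beta A$ an element $U_\U\in\U$ with $v\oslash\bigwedge U_\U<v\oslash\bigwedge A$, uses compactness of $\beta A$ to cover $A$ by finitely many such $U_\U$, and then exploits that in $[0,1]$ a finite infimum is attained, reaching a contradiction. Your argument instead plugs in the principal ultrafilters $\{a\}^\uparrow$ for $a\in A$, computes $\lambda_V(\{a\}^\uparrow)(v)=v\oslash a$ directly, and concludes via $\bigvee_{a\in A}(v\oslash a)=v\oslash\bigwedge A$ from \eqref{eq:oslahofsup}. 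This is shorter and avoids both the compactness step and the appeal to total order of $V$; it uses only the adjunction \eqref{eq:adjointtensor-1}, so in principle it generalizes beyond $V=[0,1]$. The minor hesitation in your write-up (the ``wait---rather'' correction) lands in the right place: $a\in B$ gives $\bigwedge B\le a$, hence $v\oslash\bigwedge B\ge v\oslash a$, with equality at $B=\{a\}$.
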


Recall that here $V=([0,1],\times)$, equivalently, $V=([0,\infty]^{op},+)$,
in which case this is classical. See, e.g., \cite[p.63]{indextheory},
where the structure is given in terms of $\delta_{\mathbb{P}}(x,A)$
which is our $\adh_{V}A(x)$. The corresponding $\lambda_{V}$ ($\lambda_{\mathbb{P}}$
in the notations of \cite{indextheory}) is then derived from general
formulas. Here we started from $\lambda_{V}$ and include a proof
of the formula for $\adh_{V}A$ and of Corollary \ref{cor:Vap} for
the sake of developing familiarity with $(V,\lambda_{V})$.
\begin{proof}
In view of (\ref{eq:lambdaV}), for every $\F$ with $A\in\F^{\#}$,
$\lambda_{V}(\F)(v)\leq v\oslash\bigwedge A$ so that 
\[
\adh_{V}A(v)=\bigvee_{\F\#A}\lambda_{V}\F(v)\leq v\oslash\bigwedge A.
\]

Moreover, if 
\begin{eqnarray*}
\adh_{V}A(v) & = & \bigvee_{\U\in\beta A}\lambda_{V}(\U)(v)<v\oslash\bigwedge A
\end{eqnarray*}
 then 
\[
\lambda_{V}(\U)(v)=v\oslash(\bigvee_{U\in\U}\bigwedge U)=\bigwedge_{U\in\U}v\oslash\bigwedge U<v\oslash\bigwedge A
\]
 for every $\U\in\beta A$. Hence for every $\U\in\beta A$, there
is $U_{\U}\in\U$ with $(v\oslash\bigwedge U_{\U})<v\oslash\bigwedge A$.
Moreover, there is a finite subset $\mathbb{D}$ of $\beta A$ with
$A\subset\bigcup_{\U\in\mathbb{D}}U_{\U}$, hence $\bigwedge A\geq\bigwedge(\bigcup_{\U\in\mathbb{D}}U_{\U})=\bigwedge_{\U\in\mathbb{D}}\bigwedge U_{\U}$.
Moreover, in $[0,1]$ infima of finite sets are minima, so that there
is $\U_{0}\in\mathbb{D}$ with $\bigwedge_{\U\in\mathbb{D}}\bigwedge U_{\U}=\bigwedge U_{\U_{0}}$,
so that by (\ref{eq:doublediv}), 
\[
v\oslash\bigwedge A\leq v\oslash\bigwedge U_{\U_{0}}
\]
 in contradiction to $(v\oslash\bigwedge U_{\U_{0}})<v\oslash\bigwedge A$.
\end{proof}
\begin{cor}
\label{cor:Vap}\cite[Example 1.8.33]{AP.book}, \cite[p. 64]{indextheory}
The convergence approach space $(V,\lambda_{V})$ is an approach space.
\end{cor}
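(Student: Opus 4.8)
The plan is to apply the definition of approach space directly. The computation preceding Lemma~\ref{lem:adhinV} already establishes that $\lambda_{V}$ is a pre-approach limit, so all that remains is the diagonal inequality (\ref{eq:diagonaladh}), namely $\adh_{V}A(v)\geq\adh_{V}A^{(\epsilon)}(v)\otimes\epsilon$ for all $A\subset V$, $v\in V$ and $\epsilon\in V$. First I would dispose of the degenerate case $A=\emptyset$: then $\adh_{V}\emptyset\equiv 0$, so $\emptyset^{(\epsilon)}$ is $\emptyset$ when $\epsilon>0$ and $V$ when $\epsilon=0$, and in either case the right-hand side is $0$, so the inequality holds trivially. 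For $A\neq\emptyset$, set $m:=\bigwedge A$; by Lemma~\ref{lem:adhinV}, $\adh_{V}A(w)=w\oslash m$ for every $w\in V$.

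The next step is to identify $A^{(\epsilon)}$. Using the adjunction (\ref{eq:adjointtensor-1}) together with commutativity of $\otimes$, one has $w\oslash m\geq\epsilon\iff\epsilon\otimes m\leq w$, so
\[
A^{(\epsilon)}=\{w\in V:\epsilon\otimes m\leq w\}.
\]
This set contains $1$, hence is non-empty with $\bigwedge A^{(\epsilon)}=\epsilon\otimes m$, so Lemma~\ref{lem:adhinV} applies again and yields $\adh_{V}A^{(\epsilon)}(v)=v\oslash(\epsilon\otimes m)$. Thus (\ref{eq:diagonaladh}) reduces to
\[
\bigl(v\oslash(\epsilon\otimes m)\bigr)\otimes\epsilon\leq v\oslash m,
\]
which, by (\ref{eq:adjointtensor-1}) applied with the element $m$ and then associativity of $\otimes$, is equivalent to $\bigl(v\oslash(\epsilon\otimes m)\bigr)\otimes(\epsilon\otimes m)\leq v$; this last inequality is just the counit of the adjunction $(\epsilon\otimes m)\otimes(-)\dashv(-)\oslash(\epsilon\otimes m)$. (Alternatively, the displayed inequality is an instance of (\ref{eq:doublediv}).)

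I do not expect a genuine obstacle: once Lemma~\ref{lem:adhinV} is in hand the argument is a short chase through the adjunction $\otimes\dashv\oslash$. The only points that need a little care are the empty case, the remark that $A^{(\epsilon)}$ is non-empty whenever $A\neq\emptyset$ (so that Lemma~\ref{lem:adhinV} can legitimately be reapplied to it), and keeping straight the direction of the adjunction when passing between $\otimes$ and $\oslash$.
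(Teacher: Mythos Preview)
Your proof is correct and follows essentially the same route as the paper's: both use Lemma~\ref{lem:adhinV} to rewrite $\adh_{V}A$, identify $A^{(\epsilon)}$ via the adjunction (\ref{eq:adjointtensor-1}), and then finish with a short $\otimes/\oslash$ manipulation (you invoke the counit directly, the paper phrases it through (\ref{eq:doublediv})). Your explicit handling of the case $A=\emptyset$ and the observation that $A^{(\epsilon)}\neq\emptyset$ before reapplying Lemma~\ref{lem:adhinV} are good hygiene that the paper leaves implicit.
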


\begin{proof}
Noting that, given (\ref{eq:adjointtensor-1}) and Lemma \ref{lem:adhinV},
if $A\subset V$ then 
\[
A^{(\epsilon)}=\{v\in V:v\oslash\bigwedge A\geq\epsilon\}=\{v\in V:v\geq\bigwedge A\otimes\epsilon\},
\]
we show (\ref{eq:diagonaladh}) in this case, that is,
\begin{equation}
v\oslash\bigwedge A\geq(v\oslash\bigwedge A^{(\epsilon)})\otimes\epsilon.\label{eq:APV}
\end{equation}
Indeed, $\bigwedge A^{(\epsilon)}\geq(\bigwedge A)\otimes\epsilon$
and thus $\bigwedge A^{(\epsilon)}\in A^{(\epsilon)}$ so that $\bigwedge A^{(\epsilon)}\oslash\bigwedge A\geq\epsilon$.
In view of (\ref{eq:doublediv}), with $x=\bigwedge A$ and $t=\bigwedge A^{(\epsilon)}$,
we have 
\[
\left(v\oslash\bigwedge A\right)\oslash(v\oslash\bigwedge A^{(\epsilon)})\geq\bigwedge A^{(\epsilon)}\oslash\bigwedge A\geq\epsilon,
\]
equivalently (\ref{eq:APV}), using (\ref{eq:adjointtensor-1}).
\end{proof}

\section{\protect\label{sec:Closure}Closure}

Up to this point, we just revisited classical facts about the structure
of $V$ from the viewpoint of convergence. Here is now the first key
new observation.
\begin{thm}
\label{prop:adhcont} Let $(X,\lambda)$ be a convergence approach
space and let $A\subset X$. Then $\adh A\in\cont(X,V)$ if and only
if \emph{(\ref{eq:diagonaladh})} for every $\epsilon\in V$.
\end{thm}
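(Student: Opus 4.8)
The plan is to reduce membership in $\cont(X,V)$ to a single pointwise inequality, and then verify the two implications directly against it.

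\textbf{Step 1: reformulating contractions into $V$.} First I would record what $\phi\in\cont(X,V)$ means concretely for an arbitrary $\phi\in V^{X}$. By (\ref{eq:lambdaV1}), $\lambda_{V}(\phi[\F])(v)=v\oslash\bigl(\bigvee_{B\in(\phi[\F])^{\#}}\bigwedge B\bigr)$; since $\phi[\F]$ is generated by $\{\phi(F):F\in\F\}$ and $C\mapsto\bigvee_{v\in C}v$ is isotone, applying (\ref{eq:CD}) to the filter $\phi[\F]$ on $V$ and to $\id_{V}$ gives
\[
\bigvee_{B\in(\phi[\F])^{\#}}\bigwedge B=\bigwedge_{C\in\phi[\F]}\bigvee_{v\in C}v=\bigwedge_{F\in\F}\bigvee_{x\in F}\phi(x)=\limsup_{\F}\phi.
\]
Hence $\lambda_{V}(\phi[\F])(\phi(x))=\phi(x)\oslash\limsup_{\F}\phi$, and by the adjunction (\ref{eq:adjointtensor-1}),
\[
\phi\in\cont(X,V)\iff\forall\F\in\mathbb{F}X,\ \forall x\in X:\quad\lambda(\F)(x)\otimes\limsup_{\F}\phi\leq\phi(x).
\]
The theorem then amounts to checking that, for the choice $\phi=\adh A$, this right-hand condition is equivalent to (\ref{eq:diagonaladh}).

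\textbf{Step 2 ($\adh A\in\cont(X,V)\Rightarrow$ (\ref{eq:diagonaladh})).} Fix $x\in X$ and $\epsilon\in V$. If $A^{(\epsilon)}=\emptyset$ then $\adh A^{(\epsilon)}(x)=0$ and there is nothing to prove, so assume $A^{(\epsilon)}\neq\emptyset$ and pick $\U\in\beta A^{(\epsilon)}$. Since $A^{(\epsilon)}\in\U$, every $U\in\U$ meets $A^{(\epsilon)}$, hence $\bigvee_{y\in U}\adh A(y)\geq\epsilon$ and therefore $\limsup_{\U}(\adh A)\geq\epsilon$. Applying the inequality of Step 1 to $\U$ (with $\phi=\adh A$) and using monotonicity of $\otimes$,
\[
\lambda(\U)(x)\otimes\epsilon\leq\lambda(\U)(x)\otimes\limsup_{\U}(\adh A)\leq\adh A(x).
\]
Taking the supremum over $\U\in\beta A^{(\epsilon)}$ and using that $-\otimes\epsilon$ preserves suprema yields $\adh A^{(\epsilon)}(x)\otimes\epsilon\leq\adh A(x)$, i.e. (\ref{eq:diagonaladh}). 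It is essential here to run over all of $\beta A^{(\epsilon)}$, not merely the principal filter of $A^{(\epsilon)}$, since $\adh A^{(\epsilon)}(x)$ is by definition a supremum over $\beta A^{(\epsilon)}$.

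\textbf{Step 3 ((\ref{eq:diagonaladh}) $\Rightarrow$ $\adh A\in\cont(X,V)$).} Fix $\F\in\mathbb{F}X$ and $x\in X$, and put $\epsilon:=\limsup_{\F}(\adh A)$. For each $\epsilon'<\epsilon$ and each $F\in\F$, $\bigvee_{y\in F}\adh A(y)\geq\epsilon>\epsilon'$, so some $y\in F$ satisfies $\adh A(y)>\epsilon'$, i.e. $y\in A^{(\epsilon')}\cap F$; thus $A^{(\epsilon')}\in\F^{\#}$, and since the defining supremum of $\adh A^{(\epsilon')}(x)$ contains the term $\lambda(\F)(x)$, we get $\adh A^{(\epsilon')}(x)\geq\lambda(\F)(x)$. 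Applying (\ref{eq:diagonaladh}) with $\epsilon'$,
\[
\adh A(x)\geq\adh A^{(\epsilon')}(x)\otimes\epsilon'\geq\lambda(\F)(x)\otimes\epsilon'.
\]
Since in $V=[0,1]$ every element is the supremum of the elements strictly below it, and $\otimes$ preserves suprema, taking the supremum over $\epsilon'<\epsilon$ gives $\adh A(x)\geq\lambda(\F)(x)\otimes\epsilon=\lambda(\F)(x)\otimes\limsup_{\F}(\adh A)$, which is exactly the inequality of Step 1 for $\phi=\adh A$; hence $\adh A\in\cont(X,V)$. The one non-formal point is this passage to the limit, forced by the fact that the sublevel sets $A^{(\epsilon)}$ are defined by a non-strict inequality whereas a supremum $\bigvee_{y\in F}\adh A(y)\geq\epsilon$ need not be attained: this is precisely where completeness of $V$ (every element a sup of strictly smaller ones) together with the quantale law for $\otimes$ enter. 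Everything else is a bookkeeping computation with (\ref{eq:CD}) and the adjunction (\ref{eq:adjointtensor-1}).
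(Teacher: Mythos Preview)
Your proof is correct. Step~2 is essentially the paper's argument for the same implication: the paper takes an arbitrary filter meshing with $A^{(\epsilon)}$ and stays in the $\oslash$-form, whereas you run over ultrafilters in $\beta A^{(\epsilon)}$ and use the $\otimes$-form of the adjunction, but the content is identical.

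The genuine difference is in Step~3. You first collapse the contraction condition to the single inequality $\lambda(\F)(x)\otimes\limsup_{\F}(\adh A)\leq\adh A(x)$, set $\epsilon=\limsup_{\F}(\adh A)$, and then---because $A^{(\epsilon)}$ need not mesh with $\F$ when the $\limsup$ is not attained---approximate by $\epsilon'<\epsilon$ and pass to the supremum, invoking that in $[0,1]$ every element is the supremum of strictly smaller ones and that $\otimes$ preserves suprema. The paper avoids this limiting step by not collapsing: it keeps the form $\lambda_{V}(\adh A[\F])(\adh A(x))=\bigwedge_{B\#\adh A[\F]}\adh A(x)\oslash\bigwedge B$ from (\ref{eq:lambdaV}) and applies (\ref{eq:diagonaladh}) term by term with $\epsilon=\bigwedge B$. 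Since $(\adh A)^{-1}(B)\subset A^{(\bigwedge B)}$ and $(\adh A)^{-1}(B)\#\F$, one gets $A^{(\bigwedge B)}\#\F$ directly, hence $\adh A^{(\bigwedge B)}(x)\geq\lambda(\F)(x)$, and the infimum is bounded below by $\lambda(\F)(x)$ without any approximation. The paper's route is therefore a bit more robust (it does not use the order-density of $[0,1]$), while yours makes the reduction to a single pointwise inequality in Step~1 more explicit.
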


\begin{proof}
Assume (\ref{eq:diagonaladh}). Let $\F\in\mathbb{F}X$, $A\subset X$
and $x\in X$. Then 
\begin{eqnarray*}
\lambda_{V}(\adh A[\F])(\adh A(x)) & = & \bigwedge_{B\#\adh A[\F]}\adh A(x)\oslash\bigwedge B\\
 & = & \bigwedge_{(\adh A)^{-1}(B)\#\F}\adh A(x)\oslash\bigwedge B\\
 & \geq & \bigwedge_{(\adh A)^{-1}(B)\#\F}\adh A^{(\wedge B)}(x)
\end{eqnarray*}
by (\ref{eq:diagonaladh}). $B\subset[\bigwedge B,1]$ so that $(\adh A)^{-1}(B)\subset A^{(\wedge B)}$
and thus 

\[
\bigwedge_{(\adh A)^{-1}(B)\#\F}\adh A^{(\wedge B)}(x)\geq\bigwedge_{C\#\F}\adh C(x)\geq\lambda_{X}(\F)(x).
\]
 Thus $\adh A\in\cont(X,V)$.

Conversely, suppose $\adh A\in\cont(X,V)$, so that
\[
\lambda_{V}(\adh A[\F])(\adh A(x))=\bigwedge_{(\adh A)^{-1}(B)\#\F}\adh A(x)\oslash\bigwedge B\geq\lambda(\F)(x),
\]
for every $\F\in\mathbb{F}X$ and $x\in X$, so that in particular,
for $B=[\epsilon,1]$, we obtain that 
\[
\adh A(x)\oslash\epsilon\geq\lambda(\F)(x)
\]
whenever $(\adh A)^{-1}(B)=A^{(\epsilon)}\in\F^{\#}$. Since 
\[
\adh A^{(\epsilon)}(x)=\bigvee_{A^{(\epsilon})\in\F^{\#}}\lambda(\F)(x),
\]
we conclude that 
\[
\adh A(x)\oslash\epsilon\geq\adh A^{(\epsilon)}(x),
\]
equivalently, (\ref{eq:diagonaladh}).
\end{proof}
\begin{cor}
\label{cor:approachadh} A convergence approach space $(X,\lambda)$
is an approach space if and only if it is a pre-approach space and
$\adh A\in\cont(X,V)$ for every $A\subset X$.
\end{cor}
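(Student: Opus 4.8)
The plan is to prove Corollary~\ref{cor:approachadh} by simply assembling the two characterizations already in hand: the definition of approach space as a pre-approach space satisfying (\ref{eq:diagonaladh}) for every $A\subset X$, $x\in X$, $\epsilon\in V$, and Theorem~\ref{prop:adhcont}, which says that for a fixed $A$, condition (\ref{eq:diagonaladh}) holding for all $\epsilon$ is equivalent to $\adh A\in\cont(X,V)$.

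For the forward direction, suppose $(X,\lambda)$ is an approach space. By definition it is a pre-approach space, and (\ref{eq:diagonaladh}) holds for every $A\subset X$ and every $\epsilon\in V$. Fixing $A$, the instance of (\ref{eq:diagonaladh}) for all $\epsilon$ is exactly the hypothesis of (the ``if'' direction of) Theorem~\ref{prop:adhcont}, so $\adh A\in\cont(X,V)$. Since $A$ was arbitrary, $\adh A\in\cont(X,V)$ for every $A\subset X$.

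For the converse, suppose $(X,\lambda)$ is a pre-approach space with $\adh A\in\cont(X,V)$ for every $A\subset X$. Fix any $A\subset X$; by the ``only if'' direction of Theorem~\ref{prop:adhcont}, $\adh A\in\cont(X,V)$ gives (\ref{eq:diagonaladh}) for this $A$ and every $\epsilon\in V$. Letting $A$ range over all subsets of $X$, we get (\ref{eq:diagonaladh}) for all $A$, $x$, $\epsilon$, so $(X,\lambda)$ is an approach space.

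I do not expect any genuine obstacle here: the corollary is a direct unpacking of the definition of approach space together with Theorem~\ref{prop:adhcont}, quantifying that theorem's per-$A$ equivalence over all $A\subset X$. The only point worth a word is making explicit that the definition of approach space packages ``pre-approach'' plus ``(\ref{eq:diagonaladh}) for all $A$'' as separate conjuncts, so that the pre-approach hypothesis is carried along unchanged on both sides and Theorem~\ref{prop:adhcont} handles precisely the diagonal axiom.
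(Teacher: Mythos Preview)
Your proposal is correct and matches the paper's approach exactly: the paper states this corollary without proof, as it is an immediate consequence of the definition of approach space (pre-approach plus (\ref{eq:diagonaladh}) for all $A$) together with the per-$A$ equivalence of Theorem~\ref{prop:adhcont}. Your write-up is precisely the unpacking the paper leaves implicit.
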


For a general convergence approach space $(X,\lambda)$ and $A\subset X$,
we have seen that $\adh A:X\to V$ may or may not be a morphism. A
function $f\in V^{X}$ belongs to $\cont(X,V)$ if 
\[
\lambda_{V}(f[\F])(f(x))=f(x)\oslash(\bigwedge_{F\in\F}\bigvee f(F))\geq\lambda_{X}(\F)(x),
\]
that is, if
\begin{equation}
f(x)\oslash\limsup_{\F}f\geq\lambda_{X}(\F)(x)\label{eq:contXtoV}
\end{equation}
for every $x\in X$ and $\F\in\mathbb{F}X$. 

Note that in view of (\ref{eq:limsupstarv}),
\begin{prop}
\label{prop:constantoperationpointwise} If $f\in\cont(X,V)$, and
$v\in V$ then $f\otimes v\in\cont(X,V)$.
\end{prop}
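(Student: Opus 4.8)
The plan is to verify the characterization \eqref{eq:contXtoV} directly for the function $f \otimes v$. Concretely, I would fix $x \in X$ and $\F \in \mathbb{F}X$ and aim to show that
\[
(f(x) \otimes v) \oslash \limsup_{\F}(f \otimes v) \geq \lambda_X(\F)(x),
\]
knowing by hypothesis that $f(x) \oslash \limsup_{\F} f \geq \lambda_X(\F)(x)$.

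First I would rewrite the left-hand side using the identity \eqref{eq:limsupstarv}, which gives $\limsup_{\F}(f \otimes v) = (\limsup_{\F} f) \otimes v$, so the expression becomes $(f(x) \otimes v) \oslash ((\limsup_{\F} f) \otimes v)$. The key inequality I need is then
\[
(f(x) \otimes v) \oslash ((\limsup_{\F} f) \otimes v) \geq f(x) \oslash \limsup_{\F} f,
\]
which is precisely an instance of \eqref{eq:doublediv} with $t = f(x)$, $x = \limsup_{\F} f$ (the dummy variable of \eqref{eq:doublediv}, not our point), and $v = v$: indeed \eqref{eq:doublediv} reads $(v \oslash x) \oslash (v \oslash t) \geq t \oslash x$, and here we want $(v \otimes t) \oslash (v \otimes x) \geq t \oslash x$ — so I should double-check the direction, but this is exactly the adjoint form of the statement that $v \otimes -$ is monotone and sub-distributes over $\oslash$. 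If the indices of \eqref{eq:doublediv} do not line up directly in the multiplicative form, I would instead argue from scratch via the adjunction \eqref{eq:adjointtensor-1}: letting $w := f(x) \oslash \limsup_{\F} f$, we have $w \otimes \limsup_{\F} f \leq f(x)$, hence $w \otimes (\limsup_{\F} f) \otimes v \leq f(x) \otimes v$, and applying \eqref{eq:adjointtensor-1} again yields $w \leq (f(x) \otimes v) \oslash ((\limsup_{\F} f) \otimes v)$, which is what is wanted.

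Chaining these gives $(f(x) \otimes v) \oslash \limsup_{\F}(f \otimes v) \geq f(x) \oslash \limsup_{\F} f \geq \lambda_X(\F)(x)$, so $f \otimes v$ satisfies \eqref{eq:contXtoV} and hence lies in $\cont(X,V)$. The only mild subtlety — really the only place to be careful — is getting the direction of \eqref{eq:doublediv} right when transcribed into multiplicative notation; everything else is a routine manipulation of the quantale adjunction, and the $\limsup$ identity \eqref{eq:limsupstarv} handles the passage through the pointwise operation cleanly. Since no continuity hypothesis on $v$ is needed (it is a constant function, automatically a contraction into $V$ from any space), there is no hidden obstacle here.
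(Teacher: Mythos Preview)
Your proof is correct and essentially the same as the paper's: both use \eqref{eq:limsupstarv} to pull $v$ through the $\limsup$ and then the adjunction \eqref{eq:adjointtensor-1} (counit, multiply by $v$, unit) to push $\lambda_X(\F)(x)$ through. The only cosmetic difference is that you isolate the general inequality $(a\otimes v)\oslash(b\otimes v)\geq a\oslash b$ as a separate step, whereas the paper inlines it by first passing from $f(x)\oslash\limsup_{\F}f\geq\lambda_X(\F)(x)$ to $f(x)\geq\lambda_X(\F)(x)\otimes\limsup_{\F}f$ and then tensoring with $v$; your detour through \eqref{eq:doublediv} is unnecessary (as you yourself note), but harmless.
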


\begin{proof}
Since $f\in\cont(X,V)$, 
\[
\lambda_{V}(f[\F])(f(x))=f(x)\oslash\limsup_{\F}f\geq\lambda_{X}(\F)(x),
\]
equivalently, $f(x)\geq$$\lambda_{X}(\F)(x)\otimes\limsup_{\F}f$
and thus 
\[
f(x)\otimes v\geq\lambda_{X}(\F)(x)\otimes(\limsup_{\F}f)\otimes v=\lambda_{X}(\F)(x)\otimes\limsup_{\F}(f\otimes v)
\]
by (\ref{eq:limsupstarv}), so that 
\[
(f(x)\otimes v)\oslash\limsup_{\F}(f\otimes v)\geq\lambda_{X}(\F)(x)
\]
and thus $f\otimes v\in\cont(X,V)$.
\end{proof}
In view of (\ref{eq:contXtoV}), if $S\subset\cont(X,V)$ then the
pointwise infimum (in $V^{X}$) satisfies
\begin{eqnarray*}
\lambda_{V}((\bigwedge_{f\in S}f)[\F])(\bigwedge_{f\in S}f(x)) & = & \left(\bigwedge_{f\in S}f(x)\right)\oslash(\bigwedge_{F\in\F}\bigvee_{t\in F}\bigwedge_{f\in S}f(t))\\
 & \geq & \left(\bigwedge_{f\in S}f(x)\right)\oslash(\bigwedge_{f\in S}\bigwedge_{F\in\F}\bigvee_{t\in F}f(t))
\end{eqnarray*}
because $\bigwedge_{F\in\F}\bigvee_{t\in F}\bigwedge_{f\in S}f(t)\leq\bigwedge_{f\in S}\bigwedge_{F\in\F}\bigvee_{t\in F}f(t)$
and (\ref{eq:oslahofsup}). Hence,
\[
\lambda_{V}((\bigwedge_{f\in S}f)[\F])(\bigwedge_{f\in S}f(x))\geq\bigwedge_{f\in S}\left(f(x)\oslash\limsup_{\F}f\right)\geq\lambda_{X}(\F)(x).
\]
In other words, $\bigwedge_{f\in S}f\in\cont(X,V)$ and thus $\cont(X,V)$
is a complete lattice. The supremum in $\cont(X,V)$ does not coincide
with that in $V^{X}$ in general: it is the pointwise infimum of the
upper bounds in $\cont(X,V)$. Note however, that if $f,g\in\cont(X,V)$
then the pointwise supremum $f\vee g$ in $V^{X}$ belongs to $\cont(X,V)$
(\footnote{\label{fn:finitesup}Indeed, 
\[
\lambda_{V}(f\vee g)[\F](f(x)\vee g(x))=(f(x)\vee g(x))\oslash(\bigwedge_{F\in\F}\bigvee_{t\in F}f(t)\vee g(t))
\]
and $f(x)\vee g(x)=f(x)$ or $f(x)\vee g(x)=g(x)$. In the first case,
\[
\lambda_{V}(f\vee g)[\F](f(x)\vee g(x))=f(x)\oslash(\bigwedge_{F\in\F}\bigvee_{t\in F}f(t)\vee g(t))\geq f(x)\oslash(\bigwedge_{F\in\F}\bigvee_{t\in F}f(t))\geq\lambda(\F)(x)
\]
because $f\in\cont(X,V)$ and similarly in the second case.}).

Moreover, given $f\in V^{X}$, 
\[
\uparrow f\cap\cont(X,V)=\{g\in\cont(X,V):f\leq g\}\neq\emptyset,
\]
for the constant function $1$ belongs to this set. Hence, there exists
the smallest morphism 
\begin{equation}
\cont(f)=\bigwedge\{g\in\cont(X,V):f\leq g\}\label{eq:C(f)}
\end{equation}
that is larger (for the pointwise order) than $f$.

Note that 
\begin{equation}
f\leq g\then\cont(f)\leq\cont(g)\label{eq:Cmonotone}
\end{equation}
for every $f,g\in V^{X}$ because $\cont(g)$ is a morphism greater
than $f$. Moreover,
\begin{prop}
\label{prop:Cregfunctionframe} Let $(X,\lambda)$ be a convergence
approach space and let $\cont:V^{X}\to\cont(X,V)\subset V^{X}$ be
given by \emph{(\ref{eq:C(f)})}. Then $\cont$ is a lower-hull operator
(in the sense of \cite[Def. 1.1.26]{indextheory} \footnote{where the case $V=[0,\infty]^{op}$ with $1=0$ and $0=\infty$ and
$\otimes=+$ is the one treated. Of course, as we are turning the
order of \cite{indextheory} around, it could make sense to call this
\emph{upper-hull operator }instead.}), that is, 
\begin{enumerate}
\item $f\leq\cont(f)$ for all $f\in V^{X}$;
\item $\cont(f\vee g)=\cont(f)\vee\cont(g)$ for all $f,g\in V^{X}$;
\item $\cont(\cont(f))=\cont(f)$ for all $f\in V^{X}$;
\item $\cont(f\otimes v)=\cont(f)\otimes v$ for every $f\in V^{X}$ and
$v\in V$.
\end{enumerate}
\end{prop}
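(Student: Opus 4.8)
The plan is to verify the four lower-hull axioms for $\cont$ in turn, exploiting the fact — established just before the statement — that $\cont(X,V)$ is closed under pointwise infima, under finite pointwise suprema, and under tensoring by a constant (Proposition \ref{prop:constantoperationpointwise}).

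\textbf{Axiom (1)} is immediate: by the very definition (\ref{eq:C(f)}), $\cont(f)$ is an infimum of functions each of which is $\geq f$, so $f\leq\cont(f)$.

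\textbf{Axiom (3)}: since $\cont(f)\in\cont(X,V)$, it is itself a morphism above $f$, hence the smallest morphism above $\cont(f)$ is $\cont(f)$ itself; that is, $\cont(\cont(f))=\cont(f)$. (Equivalently, $\cont$ restricted to $\cont(X,V)$ is the identity.)

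\textbf{Axiom (4)}: I would prove the two inequalities separately. For $\leq$, note $f\leq\cont(f)$ implies $f\otimes v\leq\cont(f)\otimes v$, and $\cont(f)\otimes v\in\cont(X,V)$ by Proposition \ref{prop:constantoperationpointwise}; hence $\cont(f\otimes v)\leq\cont(f)\otimes v$ by minimality. For $\geq$, I want $\cont(f)\otimes v\leq\cont(f\otimes v)$. Here the natural move is to use the right adjoint: $\cont(f\otimes v)\in\cont(X,V)$ with $f\otimes v\leq\cont(f\otimes v)$, so $f\leq\cont(f\otimes v)\oslash v$ pointwise (by (\ref{eq:adjointtensor-1})). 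If I can show $\cont(f\otimes v)\oslash v\in\cont(X,V)$, then $\cont(f)\leq\cont(f\otimes v)\oslash v$, which rearranges to $\cont(f)\otimes v\leq\cont(f\otimes v)$ as desired. So the remaining point is that $\cont(X,V)$ is stable under $(-)\oslash v$; this follows from (\ref{eq:doublediv}): if $g\in\cont(X,V)$, then for any $\F,x$ one has $g(x)\oslash\limsup_\F g\geq\lambda_X(\F)(x)$, and $(g(x)\oslash v)\oslash\limsup_\F(g\oslash v)=(g(x)\oslash v)\oslash(\limsup_\F g\oslash v)\geq g(x)\oslash\limsup_\F g\geq\lambda_X(\F)(x)$, using $\limsup_\F(g\oslash v)=(\limsup_\F g)\oslash v$ from (\ref{eq:limsupstarv}) and then (\ref{eq:doublediv}) with $x\rightsquigarrow\limsup_\F g$, $t\rightsquigarrow g(x)$. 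Hence $g\oslash v\in\cont(X,V)$.

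\textbf{Axiom (2)} is the main obstacle, since $\cont$ need not preserve arbitrary suprema. The inequality $\cont(f)\vee\cont(g)\leq\cont(f\vee g)$ is the easy half: $f\leq f\vee g\leq\cont(f\vee g)$ and likewise for $g$, so $\cont(f),\cont(g)\leq\cont(f\vee g)$ by minimality, whence their pointwise supremum is too. For the reverse $\cont(f\vee g)\leq\cont(f)\vee\cont(g)$, the key fact is the one recorded in footnote \ref{fn:finitesup}: $\cont(X,V)$ is closed under \emph{finite} pointwise suprema. Thus $\cont(f)\vee\cont(g)\in\cont(X,V)$, and it lies above $f\vee g$ by (1) and monotonicity; by minimality of $\cont(f\vee g)$ we conclude $\cont(f\vee g)\leq\cont(f)\vee\cont(g)$. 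Combining both inequalities gives (2). Thus all four axioms hold and $\cont$ is a lower-hull operator.
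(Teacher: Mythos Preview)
Your argument follows the paper's line essentially verbatim for (1)--(3), and for (4) both you and the paper obtain $\leq$ from Proposition~\ref{prop:constantoperationpointwise} and $\geq$ via the adjunction (\ref{eq:adjointtensor-1}) together with the fact that $g\oslash v\in\cont(X,V)$ whenever $g\in\cont(X,V)$.

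One small slip in your justification of that last fact: the appeal to (\ref{eq:doublediv}) does not fit. Inequality (\ref{eq:doublediv}) has the shape $(v\oslash x)\oslash(v\oslash t)\geq t\oslash x$, with a common \emph{first} argument in the two inner $\oslash$'s, whereas what you need is $(a\oslash v)\oslash(b\oslash v)\geq a\oslash b$, with a common \emph{second} argument; no substitution into (\ref{eq:doublediv}) produces this. The inequality you need is nonetheless true: from the counits $(a\oslash b)\otimes b\leq a$ and $(b\oslash v)\otimes v\leq b$ one gets $(a\oslash b)\otimes(b\oslash v)\otimes v\leq a$, hence $(a\oslash b)\otimes(b\oslash v)\leq a\oslash v$ by (\ref{eq:adjointtensor-1}), hence $a\oslash b\leq(a\oslash v)\oslash(b\oslash v)$ by (\ref{eq:adjointtensor-1}) again. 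With this correction your proof is complete --- and in fact more explicit than the paper's, which simply invokes Proposition~\ref{prop:constantoperationpointwise} for the claim $g\oslash v\in\cont(X,V)$ even though that proposition, as stated and proved, covers only $g\otimes v$.
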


\begin{proof}
(1) and (3) are clearly true. To see (2), note that $f\vee g\leq\cont(f)\vee\cont(g)$
and a finite pointwise supremum of morphism is a morphism, as we have
seen (see Footnote \ref{fn:finitesup}). Hence $\cont(f\vee g)\leq\cont(f)\vee\cont(g)$.
In view of (\ref{eq:Cmonotone}), $\cont(f)\leq\cont(f\vee g)$ and
$\cont(g)\leq\cont(f\vee g)$ so that $\cont(f\vee g)=\cont(f)\vee\cont(g)$. 

To see (4), note that $f(x)\otimes v\leq\cont(f)(x)\otimes v$ and
$\cont(f)(x)\otimes v\in\cont(X,V)$ by Proposition \ref{prop:constantoperationpointwise},
so that $\cont(f\otimes v)\leq\cont(f)\otimes v$. Moreover, if $g\in\cont(X,V)$
and $f\otimes v\leq g$ then $f\leq g\oslash v$ and $g\oslash v\in\cont(X,V)$
by Proposition \ref{prop:constantoperationpointwise}, so that $\cont(f)\leq g\oslash v$,
equivalently, $\cont(f)\otimes v\leq g$. Hence $\cont(f)\otimes v=\cont(f\otimes v)$.
\end{proof}
Note that $\theta_{A}:X\to V$ defined by $\theta_{A}(x)=\begin{cases}
1 & x\in A\\
0 & x\notin A
\end{cases}$ satisfies 
\begin{equation}
A\in\F^{\#}\iff\limsup_{\F}\theta_{A}=1\iff\limsup_{\F}\theta_{A}>0,\label{eq:grilltheta}
\end{equation}
for every $\F\in\mathbb{F}X$ and $A\subset X$. Similarly,
\begin{equation}
A\in\F\iff\liminf_{\F}\theta_{A}=1\iff\liminf_{\F}\theta_{A}>0.\label{eq:belongthetaA}
\end{equation}
More generally, for every $\epsilon\in V\setminus\{1\}$, the map
$\theta_{A}^{\epsilon}:X\to V$ defined by $\theta_{A}^{\epsilon}(x)=\begin{cases}
1 & x\in A\\
\epsilon & x\notin A
\end{cases}$ satisfies
\[
A\in\F^{\#}\iff\limsup_{\F}\theta_{A}^{\epsilon}=1\iff\limsup_{\F}\theta_{A}^{\epsilon}>\epsilon,
\]
and 
\[
A\in\F\iff\liminf_{\F}\theta_{A}^{\epsilon}=1\iff\liminf_{\F}\theta_{A}^{\epsilon}>\epsilon.
\]

\begin{lem}
\label{lem:tauepsilonclosed} If $(X,\lambda)$ is a convergence space,
$A\subset X$ and $\epsilon\in V$ then $\theta_{A}^{\epsilon}\in\cont(X,V)$
if and only if $A$ is $\tau_{\epsilon}$-closed where $x\in\lm_{\tau_{\epsilon}}\F$
if and only if $\lambda(\F)(x)>\epsilon$.
\end{lem}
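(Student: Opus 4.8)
The statement to prove is Lemma \ref{lem:tauepsilonclosed}: for a convergence approach space $(X,\lambda)$, a subset $A\subset X$, and $\epsilon\in V$, we have $\theta_A^\epsilon\in\cont(X,V)$ if and only if $A$ is $\tau_\epsilon$-closed, where $x\in\lm_{\tau_\epsilon}\F$ iff $\lambda(\F)(x)>\epsilon$.

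The plan is to unwind both sides via the explicit membership criterion (\ref{eq:contXtoV}) for $\cont(X,V)$ and the definition of closedness in a convergence space. By (\ref{eq:contXtoV}), $\theta_A^\epsilon\in\cont(X,V)$ means that for every $\F\in\mathbb{F}X$ and every $x\in X$,
\[
\theta_A^\epsilon(x)\oslash\limsup_{\F}\theta_A^\epsilon\geq\lambda_X(\F)(x).
\]
First I would split on whether $x\in A$. If $x\in A$, then $\theta_A^\epsilon(x)=1$, so the left side is $1$ (since $1\oslash t=1$ for all $t$), and the inequality holds automatically. So the only constraint is on points $x\notin A$, where $\theta_A^\epsilon(x)=\epsilon$ and the condition becomes $\epsilon\oslash\limsup_{\F}\theta_A^\epsilon\geq\lambda_X(\F)(x)$.

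Next I would use the displayed equivalence just above the lemma: $A\in\F^\#\iff\limsup_{\F}\theta_A^\epsilon=1$, and otherwise (i.e. $A\notin\F^\#$, equivalently $X\setminus A\in\F$) one has $\limsup_{\F}\theta_A^\epsilon=\epsilon$ — indeed on a set $F\in\F$ with $F\cap A=\emptyset$ the function $\theta_A^\epsilon$ is constantly $\epsilon$, and $\bigvee_{t\in F'}\theta_A^\epsilon(t)\geq\epsilon$ on every $F'\in\F$, with equality forced along $F$. (Here I use $\epsilon\neq 1$, which is part of the hypothesis guaranteeing $\theta_A^\epsilon$ is well-defined as stated; if $\epsilon=1$ then $\theta_A^\epsilon\equiv 1\in\cont(X,V)$ and $A$ is automatically $\tau_1$-closed since $\lm_{\tau_1}\F=\emptyset$, so that boundary case is trivial.) Thus, for $x\notin A$: if $A\in\F^\#$ the constraint reads $\epsilon\oslash 1=\epsilon\geq\lambda_X(\F)(x)$, i.e. $\lambda_X(\F)(x)\leq\epsilon$; if $A\notin\F^\#$ the constraint reads $\epsilon\oslash\epsilon=1\geq\lambda_X(\F)(x)$, which is vacuous. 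So $\theta_A^\epsilon\in\cont(X,V)$ is equivalent to: for every $\F$ with $A\in\F^\#$ and every $x\notin A$, $\lambda_X(\F)(x)\leq\epsilon$; equivalently, $\lambda_X(\F)(x)>\epsilon\Rightarrow x\in A$ whenever $A\in\F^\#$. By definition of $\lm_{\tau_\epsilon}$, this says exactly: $A\in\F^\#\Rightarrow\lm_{\tau_\epsilon}\F\subset A$, which is precisely the (grill-form) definition of $A$ being $\tau_\epsilon$-closed recalled in the preliminaries. For the reverse direction, these equivalences are already biconditional, so $\tau_\epsilon$-closedness of $A$ gives back membership of $\theta_A^\epsilon$ in $\cont(X,V)$ by reversing the chain.

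The main (and only real) obstacle is the computation of $\limsup_{\F}\theta_A^\epsilon$ in the case $A\notin\F^\#$: one must check that it equals $\epsilon$ exactly, not something strictly larger. This follows because $\theta_A^\epsilon\geq\epsilon$ pointwise gives $\limsup_{\F}\theta_A^\epsilon\geq\epsilon$, while a set $F\in\F$ disjoint from $A$ gives $\bigvee_{t\in F}\theta_A^\epsilon(t)=\epsilon$, hence $\limsup_{\F}\theta_A^\epsilon\leq\epsilon$. Everything else is routine bookkeeping with $\oslash$, using only that $1\oslash t=1$ and $\epsilon\oslash\epsilon=1$ and $\epsilon\oslash 1=\epsilon$, all immediate from (\ref{eq:adjointtensor-1}). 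One should also note that we never use any axiom on $\lambda$ beyond monotonicity, so the lemma is stated correctly for an arbitrary convergence approach space.
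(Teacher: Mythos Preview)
Your proof is correct and follows essentially the same approach as the paper's own proof: both split on whether $x\in A$, compute $\limsup_{\F}\theta_A^\epsilon$ according to whether $A\in\F^{\#}$ or not, and reduce the contraction inequality to the condition $\lambda(\F)(x)\leq\epsilon$ whenever $x\notin A$ and $A\in\F^{\#}$, which is exactly $\tau_\epsilon$-closedness. Your write-up is slightly more detailed (you justify $\limsup_{\F}\theta_A^\epsilon=\epsilon$ when $A\notin\F^{\#}$ and handle the trivial boundary case $\epsilon=1$), but the argument is the same.
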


\begin{proof}
The condition 
\[
\lambda_{V}(\theta_{A}^{\epsilon}[\F])(\theta_{A}^{\epsilon}(x))=\theta_{A}^{\epsilon}(x)\oslash\limsup_{\F}\theta_{A}^{\epsilon}\geq\lambda\F(x)
\]
 is empty if $x\in A$ because $1\oslash v=1$. Suppose $x\notin A$.
If $A\notin\F^{\#}$ then $\limsup_{\F}\theta_{A}^{\epsilon}=\epsilon$
and $\theta_{A}^{\epsilon}(x)\oslash\limsup_{\F}\theta_{A}^{\epsilon}=\epsilon\oslash\epsilon=1\geq\lambda\F(x)$
is satisfied. If $A\in\F^{\#}$ then $\limsup_{\F}\theta_{A}^{\epsilon}=1$
and $\theta_{A}^{\epsilon}(x)\oslash\limsup_{\F}\theta_{A}^{\epsilon}=\epsilon\oslash1=\epsilon$,
so that $\theta_{A}^{\epsilon}\in\cont(X,V)$ if and only if for every
$x\notin A$ and every $\F$ with $A\in\F^{\#}$, $\lambda\F(x)\leq\epsilon$,
that is, $A$ is $\tau_{\epsilon}$-closed.
\end{proof}
\begin{thm}
\label{thm:adhcont} If $(X,\lambda)$ is a convergence space and
$A\subset X$ then
\[
\cont(\adh A)=\cont(\theta_{A}).
\]
Moreover, the following are equivalent:
\begin{enumerate}
\item $\theta_{A}\in\cont(X,V);$ 
\item $\theta_{A}^{\epsilon}\in\cont(X,V)$ for every $\epsilon\in V\setminus\{0\}$;
\item $A$ is $r(\lambda)$-closed. 
\end{enumerate}
\end{thm}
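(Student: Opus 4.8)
The statement bundles two things: the identity $\cont(\adh A)=\cont(\theta_A)$ and the three-way equivalence. I would dispatch the equivalence first, since it is the cleanest, and then use it (together with the formula for $\adh A$) to get the identity.

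For the equivalence, the implication $(2)\Rightarrow(1)$ is trivial by taking $\epsilon\to 0$ — more precisely, $\theta_A=\theta_A^{0}$ in the notation of the paper, so $(2)$ with the closure of $V\setminus\{0\}$ under infima (or just: $(2)$ for a sequence $\epsilon_n\to 0$ and the fact that a pointwise infimum of morphisms is a morphism, which was established right before $\cont$ was introduced) yields $\theta_A=\bigwedge_{\epsilon>0}\theta_A^{\epsilon}\in\cont(X,V)$. Actually the cleanest route is: each $\theta_A^\epsilon\geq\theta_A$, so $\theta_A=\bigwedge_{\epsilon\in V\setminus\{0\}}\theta_A^\epsilon$, and since $\cont(X,V)$ is closed under arbitrary pointwise infima, $(2)\Rightarrow(1)$. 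For the reverse directions, invoke Lemma~\ref{lem:tauepsilonclosed}: $\theta_A^\epsilon\in\cont(X,V)$ iff $A$ is $\tau_\epsilon$-closed, where $\tau_\epsilon$ is the convergence $x\in\lm\F\iff\lambda(\F)(x)>\epsilon$. Now $\theta_A=\theta_A^{0}$, so $(1)$ says $A$ is $\tau_0$-closed, i.e. $A$ is closed for the convergence $\lambda(\F)(x)>0$, which is exactly $r(\lambda)$; that gives $(1)\Leftrightarrow(3)$ directly from the lemma. Finally $(3)\Rightarrow(2)$: if $A$ is $r(\lambda)$-closed and $\F$ meshes with $A$, then every $x$ with $\lambda(\F)(x)>\epsilon\geq 0$ lies in $A$ (using $r(\lambda)$-closedness and $\lambda(\F)(x)>0$), so $A$ is $\tau_\epsilon$-closed for every $\epsilon\in V\setminus\{0\}$, hence $\theta_A^\epsilon\in\cont(X,V)$ by the lemma. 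So all three are equivalent.

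For the identity $\cont(\adh A)=\cont(\theta_A)$: since $\adh_\lambda A(x)=\bigvee_{A\in\F^{\#}}\lambda(\F)(x)\geq\lambda(\{x\}^\uparrow)(x)=1$ for $x\in A$ and is some value in $V$ for $x\notin A$, we always have $\theta_A\leq\adh A$ pointwise (on $A$ both equal $1$; off $A$, $\theta_A=0$). By monotonicity~(\ref{eq:Cmonotone}) of $\cont$, $\cont(\theta_A)\leq\cont(\adh A)$. For the reverse inequality it suffices to show $\adh A\leq\cont(\theta_A)$, because then $\cont(\adh A)\leq\cont(\cont(\theta_A))=\cont(\theta_A)$ by idempotency (Proposition~\ref{prop:Cregfunctionframe}(3)). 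So fix $x\in X$ and a filter $\F$ with $A\in\F^{\#}$; I must bound $\lambda(\F)(x)$ by $\cont(\theta_A)(x)$. Since $\cont(\theta_A)\in\cont(X,V)$ and $\theta_A\leq\cont(\theta_A)$, the contraction inequality~(\ref{eq:contXtoV}) gives $\cont(\theta_A)(x)\oslash\limsup_\F\cont(\theta_A)\geq\lambda(\F)(x)$, equivalently $\cont(\theta_A)(x)\geq\lambda(\F)(x)\otimes\limsup_\F\cont(\theta_A)$. Now $\limsup_\F\cont(\theta_A)\geq\limsup_\F\theta_A=1$ by~(\ref{eq:grilltheta}) since $A\in\F^{\#}$; hence $\cont(\theta_A)(x)\geq\lambda(\F)(x)\otimes 1=\lambda(\F)(x)$. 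Taking the supremum over all $\F$ meshing with $A$ gives $\cont(\theta_A)(x)\geq\adh_\lambda A(x)$, as needed.

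The main obstacle, such as it is, is bookkeeping around the boundary value $\epsilon$ and the convention $\theta_A^{0}=\theta_A$ (the paper only defines $\theta_A^\epsilon$ for $\epsilon\in V\setminus\{1\}$, which includes $0$, so this is fine), and making sure Lemma~\ref{lem:tauepsilonclosed} is being applied with the right $\epsilon$. The genuinely substantive step is $\adh A\leq\cont(\theta_A)$, but as shown it falls out immediately from the defining inequality~(\ref{eq:contXtoV}) for morphisms into $V$ together with the observation $\limsup_\F\theta_A=1$ whenever $A\in\F^{\#}$ — nothing deeper is required, and no new estimates beyond what Section~\ref{sec:Closure} already provides.
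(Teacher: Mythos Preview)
Your proof is correct and follows essentially the same approach as the paper. The paper proves the identity first and the equivalence second, and for $\adh A\leq\cont(\theta_A)$ it works with ultrafilters $\U\in\beta A$ (noting $\theta_A[\U]=\{1\}$) rather than general filters meshing with $A$, but the underlying idea---that the contraction inequality for $\cont(\theta_A)$ collapses because $\limsup$ along any such filter equals $1$---is identical, and the equivalence argument via Lemma~\ref{lem:tauepsilonclosed} and the infimum $\theta_A=\bigwedge_{\epsilon>0}\theta_A^{\epsilon}$ matches the paper's exactly.
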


\begin{proof}
As $\theta_{A}\leq\adh A$, $\cont(\theta_{A})\leq\cont(\adh A)$.
Moreover, $adhA\leq\cont(\theta_{A})$ and the equality follows. Indeed,
\[
\adh A(x)=\bigvee_{\U\in\beta A}\lambda\U(x)
\]
and $\theta_{A}(\U)=\{1\}$ so that $\cont(\theta_{A})[\U]=\{1\}^{\uparrow}$
and 
\[
\lambda_{V}(\cont(\theta_{A}))[\U](\cont(\theta_{A})(x))=\lambda_{V}(\{1\}^{\uparrow})(\cont(\theta_{A})(x))=\cont(\theta_{A})(x)\oslash1=\cont(\theta_{A})(x),
\]
so that $\cont(\theta_{A})(x)\geq\lambda\U(x)$ because $\cont(\theta_{A})\in\cont(X,V)$.
Thus $adhA\leq\cont(\theta_{A})$.

For the second part, $(1)\iff(3)$ is Lemma \ref{lem:tauepsilonclosed}
for $\epsilon=0$. 

$(3)\iff(2)$: If $A$ is $r(\lambda)$-closed, and $A\in\F$ with
$x\in\lm_{\tau_{\epsilon}}\F\subset\lim_{r(\lambda)}\F$ then $x\in A$.
Thus $A$ is $\tau_{\epsilon}$-closed, that is, $\theta_{A}^{\epsilon}\in\cont(X,V)$
by Lemma \ref{lem:tauepsilonclosed}. Conversely, if $\theta_{A}^{\epsilon}\in\cont(X,V)$
for every $\epsilon>0$ then $\theta_{A}=\bigwedge_{\epsilon>0}\theta_{A}^{\epsilon}\in\cont(X,V)$.
\end{proof}

\begin{defn}
If $(X,\lambda)$ is a convergence approach space and $A\subset X$,
we call \emph{closure function of $A$, }the morphism $\cont(\theta_{A})=\cont(\adh A)\in\cont(X,V)$
and we denote it $\cl A$.
\end{defn}

Closure functions behave like topological closures in the following
sense:
\begin{prop}
\label{prop:closurelikeKuratowski} If $(X,\lambda)$ is a convergence
approach space, the operator $\cl:\mathbb{P}X\to\cont(X,V)$ satisfies
\begin{enumerate}
\item $\cl\emptyset=\theta_{\emptyset}$ is the constant zero function;
\item $A\subset B\then\cl A\leq\cl B$;
\item $\cl(A\cup B)=\cl A\vee\cl B$;
\item $\cl(\{\cl A=1\})=\cl A$.
\end{enumerate}
\end{prop}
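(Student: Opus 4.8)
The plan is to verify the four properties of $\cl$ essentially by reducing each to a statement about the lower-hull operator $\cont$ applied to indicator functions, using $\cl A=\cont(\theta_A)$ together with Proposition \ref{prop:Cregfunctionframe}.

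For (1), observe that $\theta_{\emptyset}$ is already the constant zero function, and the constant zero function is trivially in $\cont(X,V)$ (the condition \eqref{eq:contXtoV} reads $0\oslash 0=1\geq\lambda_X(\F)(x)$, which holds); hence $\cl\emptyset=\cont(\theta_{\emptyset})=\theta_{\emptyset}$. For (2), $A\subset B$ gives $\theta_A\leq\theta_B$ pointwise, so monotonicity of $\cont$ \eqref{eq:Cmonotone} yields $\cl A\leq\cl B$. For (3), the key identity is $\theta_{A\cup B}=\theta_A\vee\theta_B$ (pointwise max of indicators), so $\cl(A\cup B)=\cont(\theta_A\vee\theta_B)=\cont(\theta_A)\vee\cont(\theta_B)=\cl A\vee\cl B$ by part (2) of Proposition \ref{prop:Cregfunctionframe}.

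The only part requiring real work is (4). Write $C=\{x:\cl A(x)=1\}=\{x:\cont(\theta_A)(x)=1\}$; we must show $\cont(\theta_C)=\cont(\theta_A)$. One inclusion is easy: since $\cont(\theta_A)\in\cont(X,V)$ and $\cont(\theta_A)\geq\theta_A$ (so $\cont(\theta_A)=1$ wherever $\theta_A=1$, i.e. $A\subset C$), monotonicity gives $\cl A=\cont(\theta_A)\leq\cont(\theta_C)=\cl C$. For the reverse inequality $\cont(\theta_C)\leq\cont(\theta_A)$, it suffices to show $\theta_C\leq\cont(\theta_A)$, because then $\cont(\theta_A)$ is a morphism dominating $\theta_C$, whence $\cont(\theta_C)\leq\cont(\theta_A)$. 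But $\theta_C(x)=1$ exactly when $\cont(\theta_A)(x)=1$ and $\theta_C(x)=0$ otherwise, so $\theta_C\leq\cont(\theta_A)$ pointwise is immediate. Combining the two inequalities gives $\cl(\{\cl A=1\})=\cl A$.

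The step I expect to be the main obstacle is (4), specifically making sure the argument $\theta_C\leq\cont(\theta_A)$ is watertight at the level $0$: one needs that $\theta_C(x)=0\le\cont(\theta_A)(x)$ always holds, which it does since $0$ is the least element of $V$. So in fact (4) also reduces to monotonicity of $\cont$ plus the trivial bound $\theta_C\leq\cont(\theta_A)$, and no finer property of the convergence approach structure is needed. I would present the proof in that order: (1), (2), (3) as one-line reductions to Proposition \ref{prop:Cregfunctionframe} and \eqref{eq:Cmonotone}, then (4) via the two-sided squeeze above.

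\begin{proof}
Throughout we use $\cl A=\cont(\theta_{A})$ and the properties of the lower-hull operator $\cont$ from Proposition \ref{prop:Cregfunctionframe}, together with \eqref{eq:Cmonotone}.

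(1) Since $\theta_{\emptyset}$ is the constant zero function and, by \eqref{eq:contXtoV}, $0\oslash\limsup_{\F}\theta_{\emptyset}=0\oslash 0=1\geq\lambda_{X}(\F)(x)$, the constant zero function lies in $\cont(X,V)$. Hence $\cl\emptyset=\cont(\theta_{\emptyset})=\theta_{\emptyset}$.

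(2) If $A\subset B$ then $\theta_{A}\leq\theta_{B}$ pointwise, so $\cl A=\cont(\theta_{A})\leq\cont(\theta_{B})=\cl B$ by \eqref{eq:Cmonotone}.

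(3) Pointwise, $\theta_{A\cup B}=\theta_{A}\vee\theta_{B}$. Therefore, using Proposition \ref{prop:Cregfunctionframe}(2),
\[
\cl(A\cup B)=\cont(\theta_{A}\vee\theta_{B})=\cont(\theta_{A})\vee\cont(\theta_{B})=\cl A\vee\cl B.
\]

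(4) Set $C=\{x\in X:\cl A(x)=1\}$. Since $\theta_{A}\leq\cl A=\cont(\theta_{A})$, we have $\cl A(x)=1$ for every $x\in A$, so $A\subset C$ and, by (2), $\cl A\leq\cl C$. Conversely, $\theta_{C}(x)=1$ precisely when $\cl A(x)=1$, and $\theta_{C}(x)=0\leq\cl A(x)$ otherwise; hence $\theta_{C}\leq\cl A=\cont(\theta_{A})$. As $\cont(\theta_{A})\in\cont(X,V)$ dominates $\theta_{C}$, minimality of $\cont(\theta_{C})$ gives $\cl C=\cont(\theta_{C})\leq\cont(\theta_{A})=\cl A$. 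Combining the two inequalities, $\cl(\{\cl A=1\})=\cl C=\cl A$.
\end{proof}
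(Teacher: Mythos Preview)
Your proof is correct and follows essentially the same route as the paper's: (1)--(3) are handled via the definition, \eqref{eq:Cmonotone}, and Proposition \ref{prop:Cregfunctionframe}(2) together with $\theta_{A\cup B}=\theta_A\vee\theta_B$, while (4) is the two-sided squeeze $A\subset\{\cl A=1\}$ and $\theta_{\{\cl A=1\}}\leq\cl A$. You spell out a little more detail (e.g.\ verifying $\theta_\emptyset\in\cont(X,V)$), but the argument is the same.
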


\begin{proof}
(1) and (2) follow directly from the definition and (3) follows from
Proposition \ref{prop:Cregfunctionframe}(2) because $\theta_{A}\vee\theta_{B}=\theta_{A\cup B}$.
To see (4), note that as $A\subset\{\cl A=1\}$, so that $\cl A\leq\cl(\{\cl A=1\})$.
On the other hand, $\cl A\in\cont(X,V)$ and $\theta_{\{\cl A=1\}}\leq\cl A$
so that $\cl(\{\cl A=1\})\leq\cl A$.
\end{proof}
Note that in view of Theorem \ref{thm:adhcont}, $A\subset X$ is
$r(\lambda)$-closed if and only if $\theta_{A}=\cl A$. Hence, 
\[
\theta_{\cl_{r(\lambda)}A}=\cl(\cl_{r(\lambda)}A)\geq\cl A
\]
 but the inequality may be strict, because the function $\cl A$ may
belong to $\cont(X,V)\setminus\theta(\mathbb{P}X)$:
\begin{example}[A closure function that is not an indicator function]
\label{exa:closurenonindicator} Let $(X,d)$ be a non-trivial metric
space (say, $\mathbb{R}$ with its usual metric) then $(X,\lambda_{d})$
given by 
\[
\lambda_{d}(\F)(x)=\bigwedge_{F\in\F}\bigvee_{t\in F}d(t,x),
\]
is an approach space with $V=([0,\infty]^{op},+)$, in which 
\[
\adh A(\cdot)=d(A,\cdot)=\cl A(\cdot)
\]
 can be a closure function that is not an indicator function, as it
takes a spectrum of values (say $A=[1,2]$ then $\adh A(4)=2$).
\end{example}

Note that in view of Corollary \ref{cor:approachadh}, an approach
space $(X,\lambda)$ satisfies 
\[
\lambda(\F)(x)=\bigwedge_{B\#\F}\adh B(x)
\]
because it is a pre-approach space, and $\adh B\in\cont(X,V)$ so
that $\adh B=\cl B$. Thus, for an approach space, we obtain the following
analogs of the facts that if $\F$ is a filter on a topological space,
then 
\[
\lim\F=\bigcap_{B\#\F}\cl B\text{ and }\adh\F=\bigcap_{F\in\F}\cl F.
\]

\begin{prop}
If $(X,\lambda)$ is an approach space and $\F\in\mathbb{F}X$ then

\begin{equation}
\lambda(\F)(x)=\bigwedge_{B\#\F}\cl B(x),\label{eq:APcl}
\end{equation}
and
\begin{equation}
\adh_{\lambda}\F(\cdot)=\bigwedge_{F\in\F}\cl F(\cdot).\label{eq:APadhclosure}
\end{equation}
\end{prop}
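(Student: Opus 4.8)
The first identity (\ref{eq:APcl}) is already essentially recorded in the paragraph preceding the statement, so the plan is simply to spell it out: since $(X,\lambda)$ is an approach space it is a pre-approach space, whence $\lambda(\F)(x)=\bigwedge_{B\#\F}\adh_\lambda B(x)$ by Theorem~\ref{thm:praprefl}, and $\adh_\lambda B\in\cont(X,V)$ by Corollary~\ref{cor:approachadh}, so $\adh_\lambda B=\cont(\adh_\lambda B)=\cl B$. For later use I would also note the specialization of (\ref{eq:APcl}) to an ultrafilter $\U$ (for which $B\#\U\iff B\in\U$), namely $\lambda(\U)(x)=\bigwedge_{B\in\U}\cl B(x)$, and record that $A\subset B\Rightarrow\adh_\lambda A\le\adh_\lambda B$ and $\adh_\lambda(A\cup B)=\adh_\lambda A\vee\adh_\lambda B$ (because $\beta A\subset\beta B$, resp. $\beta(A\cup B)=\beta A\cup\beta B$), which then hold for $\cl$ as well since $\cl=\adh_\lambda$ here.

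For (\ref{eq:APadhclosure}) I would fix $x\in X$, put $c=\bigwedge_{F\in\F}\cl F(x)$, and prove the two inequalities separately. The inequality $\adh_\lambda\F(x)\le c$ is immediate: for $\U\in\beta(\F)$ and $F\in\F$ one has $F\in\U$, hence $\lambda(\U)(x)=\bigwedge_{B\in\U}\cl B(x)\le\cl F(x)$; taking the infimum over $F\in\F$ and then the supremum over $\U\in\beta(\F)$ gives $\adh_\lambda\F(x)=\bigvee_{\U\in\beta(\F)}\lambda(\U)(x)\le c$.

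The reverse inequality carries the real content. Since the case $c=0$ is trivial (as $0$ is the least element of $V$) and $V$ is $[0,1]$ or $[0,\infty]^{op}$, so that $c=\bigvee\{\epsilon\in V:\epsilon<c\}$, it suffices to show that for each $\epsilon<c$ there is an ultrafilter $\U\in\beta(\F)$ with $\lambda(\U)(x)\ge\epsilon$: then $\adh_\lambda\F(x)\ge\lambda(\U)(x)\ge\epsilon$, and letting $\epsilon$ run up to $c$ finishes. I would produce such a $\U$ by a compactness argument in the Stone space $\mathbb{U}X$ (basic open sets $\beta A$, $A\subset X$; recall $\mathbb{U}X$ is compact). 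The set $K_\epsilon=\{\U\in\mathbb{U}X:\lambda(\U)(x)\ge\epsilon\}$ is closed, because if $\lambda(\U)(x)=\bigwedge_{B\in\U}\adh_\lambda B(x)<\epsilon$ then some $B_0\in\U$ has $\adh_\lambda B_0(x)<\epsilon$, and $\beta B_0$ is then a neighbourhood of $\U$ on which $\lambda(\cdot)(x)\le\adh_\lambda B_0(x)<\epsilon$. For each $F\in\F$, $\cl F(x)=\adh_\lambda F(x)=\bigvee_{\U\in\beta F}\lambda(\U)(x)\ge c>\epsilon$ forces $\beta F\cap K_\epsilon\neq\emptyset$; and since $\beta F_1\cap\cdots\cap\beta F_n\cap K_\epsilon=\beta(F_1\cap\cdots\cap F_n)\cap K_\epsilon$ with $F_1\cap\cdots\cap F_n\in\F$, the family $\{\beta F\cap K_\epsilon:F\in\F\}$ consists of nonempty closed subsets of $\mathbb{U}X$ with the finite intersection property. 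By compactness $\bigcap_{F\in\F}(\beta F\cap K_\epsilon)=\beta(\F)\cap K_\epsilon\neq\emptyset$, yielding the desired $\U$. (A self-contained variant avoiding $\mathbb{U}X$: take, by Zorn, a maximal filter $\U\ge\F$ with $\adh_\lambda H(x)>\epsilon$ for all $H\in\U$ --- nonempty since $\adh_\lambda F(x)=\cl F(x)\ge c>\epsilon$ for $F\in\F$ --- and use $\adh_\lambda(S\cup T)=\adh_\lambda S\vee\adh_\lambda T$ to check it is an ultrafilter.) The main obstacle is exactly this interchange of $\bigwedge_{F}$ and $\bigvee_{\U}$, i.e. the upper semicontinuity of $\U\mapsto\lambda(\U)(x)$ on $\mathbb{U}X$; everything else is bookkeeping, and the role of the approach (rather than merely pre-approach) hypothesis is only to guarantee $\adh_\lambda B=\cl B$ for every $B$.
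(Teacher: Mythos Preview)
Your proof is correct and follows essentially the same Stone-space compactness argument as the paper: both hinge on the compactness of $\beta(\F)$ in $\mathbb{U}X$ together with the finite additivity $\cl(A\cup B)=\cl A\vee\cl B$. The paper packages the hard inequality as a one-step contradiction at level $c$ (assume $\adh_\lambda\F(x)<c$, pick $U_\U\in\U$ with $\cl U_\U(x)<c$ for each $\U\in\beta(\F)$, extract a finite subfamily whose union $F_0$ lies in $\F$, and observe $\cl F_0(x)<c$), thereby bypassing your $\epsilon$-approximation, but the underlying mechanism is identical.
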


\begin{proof}
(\ref{eq:APcl}) follows directly from the previous considerations.
To see (\ref{eq:APadhclosure}), note that for every $F\in\F$, $\adh\F\leq\adh F\leq\cl F$
because $\beta(\F)\subset\beta(F)$, so that $\adh\F\leq\bigwedge_{F\in\F}\cl F$.
On the other hand, if $\adh\F<\bigwedge_{F\in\F}\cl F$ then $\lambda\U=\bigwedge_{U\in\U}\cl U<\cl F$
for every $\U\in\beta(\F)$ and $F\in\F$, hence there is $U_{\U}\in\U$
with $\cl U_{\U}<\cl F$ for all $F\in\F$. However, there is a finite
subset $\mathbb{D}$ of $\beta(\F)$ with $\bigcup_{\U\in\mathbb{D}}U_{\U}=F_{0}\in\F$
and 
\[
\cl(F_{0})=\bigvee_{\U\in\mathbb{D}}\cl(U_{\U})
\]
by Proposition \ref{prop:closurelikeKuratowski}(3), which is a contradiction.
Hence $\adh\F=\bigwedge_{F\in\F}\cl F$.
\end{proof}
\begin{cor}
In an approach space $(X,\lambda),$
\[
\adh\F\in\cont(X,V)
\]
for every $\F\in\mathbb{F}X$.
\end{cor}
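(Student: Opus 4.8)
The plan is to read off the statement from the identity (\ref{eq:APadhclosure}) together with the lattice-theoretic properties of $\cont(X,V)$ already established. First I would recall that, for any convergence approach space, the discussion preceding Proposition \ref{prop:constantoperationpointwise} shows that whenever $S\subset\cont(X,V)$ the pointwise infimum $\bigwedge_{f\in S}f$ computed in $V^{X}$ again lies in $\cont(X,V)$; in particular $\cont(X,V)$ is closed under arbitrary pointwise infima, not merely a complete lattice in its own order.

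Next I would invoke (\ref{eq:APadhclosure}): since $(X,\lambda)$ is an approach space, for every $\F\in\mathbb{F}X$ we have
\[
\adh_{\lambda}\F(\cdot)=\bigwedge_{F\in\F}\cl F(\cdot),
\]
where the infimum is taken pointwise in $V^{X}$. By definition $\cl F=\cont(\theta_{F})=\cont(\adh F)\in\cont(X,V)$ for each $F\in\F$, so $\{\cl F:F\in\F\}$ is a subset of $\cont(X,V)$. Applying the closure of $\cont(X,V)$ under pointwise infima to this family yields $\adh_{\lambda}\F=\bigwedge_{F\in\F}\cl F\in\cont(X,V)$, which is exactly the assertion.

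There is essentially no obstacle here; the only point worth flagging is \emph{where} the approach-space hypothesis is used: it enters solely through (\ref{eq:APadhclosure}), which in turn rests on Corollary \ref{cor:approachadh} (so that $\adh B=\cl B$ for all $B$) and on Proposition \ref{prop:closurelikeKuratowski}(3) in the proof of (\ref{eq:APadhclosure}). For a merely pre-approach, or general, convergence approach space the equality $\adh\F=\bigwedge_{F\in\F}\cl F$ may fail, and with it the conclusion, so the corollary is genuinely a fact about $\ap$ rather than $\Cap$. One could also record, as an immediate variant, that the same argument applied to the principal filter $\{A\}^{\uparrow}$ recovers the statement $\adh A=\cl A\in\cont(X,V)$ from Corollary \ref{cor:approachadh}, so the present corollary is the filter-level strengthening of that fact.
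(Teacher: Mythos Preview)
Your proof is correct and follows exactly the route the paper intends: the corollary is stated without proof because it is immediate from (\ref{eq:APadhclosure}) together with the closure of $\cont(X,V)$ under pointwise infima. One tiny correction: the passage showing that $\bigwedge_{f\in S}f\in\cont(X,V)$ for $S\subset\cont(X,V)$ appears \emph{after} Proposition~\ref{prop:constantoperationpointwise}, not before it.
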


\begin{prop}
\label{prop:preimageclosure} If $(X,\lambda_{X})$ and $(Y,\lambda_{Y})$
are two convergence approach spaces and $f\in\cont(X,Y)$ and $A\subset Y$
then 
\begin{equation}
\cl_{X}(f^{-}A)\leq\cl_{Y}A\circ f.\label{eq:closurecontraction}
\end{equation}

Conversely, if $(Y,\lambda_{Y})$ is an approach space and \emph{(\ref{eq:closurecontraction})}
for every $A\subset Y$, then $f\in\cont(X,Y)$.
\end{prop}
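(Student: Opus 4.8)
The plan is to derive the forward inequality by transporting its adherence-analog (Proposition~\ref{prop:preimageadh}) through the lower-hull operator and then to derive the converse from the description \eqref{eq:APcl} of the limit of an approach space by closure functions. For the forward direction I would start from the fact that, by Theorem~\ref{thm:adhcont}, $\cl_{X}(f^{-}A)$ is the smallest element of $\cont(X,V)$ that dominates $\adh_{\lambda_{X}}(f^{-}A)$, while \eqref{eq:princadhcontraction} gives $\adh_{\lambda_{X}}(f^{-}A)\leq\adh_{\lambda_{Y}}A\circ f$. The key remark is that $\cl_{Y}A\circ f$ lies in $\cont(X,V)$: indeed $\cl_{Y}A\in\cont(Y,V)$ and $f\in\cont(X,Y)$, and morphisms of $\Cap$ compose (a one-line check against the definition of a contraction). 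Since moreover $\adh_{\lambda_{Y}}A\leq\cl_{Y}A$, combining these yields $\adh_{\lambda_{X}}(f^{-}A)\leq\cl_{Y}A\circ f$, so $\cl_{Y}A\circ f$ is an element of $\cont(X,V)$ dominating $\adh_{\lambda_{X}}(f^{-}A)$; the minimality characterizing $\cl_{X}(f^{-}A)$ then forces $\cl_{X}(f^{-}A)\leq\cl_{Y}A\circ f$, which is \eqref{eq:closurecontraction}.

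For the converse I would assume that $(Y,\lambda_{Y})$ is an approach space and that \eqref{eq:closurecontraction} holds for every $A\subset Y$, fix $\F\in\mathbb{F}X$ and $x\in X$, and show $\lambda_{X}(\F)(x)\leq\lambda_{Y}(f[\F])(f(x))$. First record the elementary equivalence $B\#f[\F]\iff f^{-}B\#\F$, which reduces to the observation that $B\cap f(F)\neq\emptyset$ iff $f^{-}B\cap F\neq\emptyset$. Hence, for $B\#f[\F]$ we have $f^{-}B\in\F^{\#}$, so $\F$ occurs in the supremum defining the principal adherence, and therefore $\lambda_{X}(\F)(x)\leq\adh_{\lambda_{X}}(f^{-}B)(x)\leq\cl_{X}(f^{-}B)(x)\leq\cl_{Y}B(f(x))$, the last step by hypothesis. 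Taking the infimum over all $B\#f[\F]$ and invoking \eqref{eq:APcl} in the approach space $Y$, I obtain
\[
\lambda_{X}(\F)(x)\leq\bigwedge_{B\#f[\F]}\cl_{Y}B(f(x))=\lambda_{Y}(f[\F])(f(x)),
\]
which is the contraction condition; as $\F$ and $x$ were arbitrary, $f\in\cont(X,Y)$.

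Neither half presents a real obstacle: the forward part is essentially bookkeeping around Proposition~\ref{prop:preimageadh}, where the only point needing care is that post-composition with a contraction carries $\cont(Y,V)$ into $\cont(X,V)$ (functoriality of $\Cap$). In the converse the delicate point is that one genuinely uses that $Y$ is an approach space, not merely a pre-approach space: it is Corollary~\ref{cor:approachadh} (through $\adh_{\lambda_{Y}}B=\cl_{Y}B$) that makes \eqref{eq:APcl} available, and it is exactly this representation that converts the family of pointwise estimates $\lambda_{X}(\F)(x)\leq\cl_{Y}B(f(x))$ into the single contraction inequality.
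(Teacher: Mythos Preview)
Your proof is correct and follows essentially the same approach as the paper. The only minor difference is in the forward direction: the paper bypasses Proposition~\ref{prop:preimageadh} entirely by observing directly that $\cl_{Y}A\circ f\geq\theta_{A}\circ f=\theta_{f^{-}A}$, so that $\cl_{Y}A\circ f$ is a morphism dominating $\theta_{f^{-}A}$ and hence dominates $\cl_{X}(f^{-}A)=\cont(\theta_{f^{-}A})$; your detour through $\adh_{\lambda_{X}}(f^{-}A)$ works just as well (by Theorem~\ref{thm:adhcont} the two hulls coincide) but is slightly less direct. The converse is identical to the paper's argument.
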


\begin{proof}
As $\cl_{Y}A\circ f\in\cont(X,V)$ as a composite of morphisms and
$\cl_{Y}A\circ f\geq\theta_{A}\circ f=\theta_{f^{-}A}$, the conclusion
follows.

Conversely, assume $(Y,\lambda_{Y})$ is an approach space and (\ref{eq:closurecontraction})
for every $A\subset Y$, and let $\F\in\mathbb{F}X$. Since $f^{-}(A)\#\F$
for every $A\#f[\F]$, 
\[
\lambda_{X}(\F)\leq\adh_{\lambda_{X}}(f^{-}A)\leq\cl_{X}(f^{-}A)\leq\cl_{Y}A\circ f.
\]
Hence, 
\[
\lambda_{X}(\F)\leq\bigwedge_{A\#f[\F]}\cl_{Y}A\circ f=\lambda_{Y}(f[\F])\circ f
\]
because of (\ref{eq:APcl}) in $Y$.
\end{proof}
\begin{thm}
\label{thm:approachreflection} The $\ap$-reflection $\T X=(X,\lambda_{\T})$
of a convergence approach space $(X,\lambda)$ is given by
\[
\lambda_{\T}(\F)(x)=\bigwedge_{B\#\F}\cl_{\lambda}B(x).
\]
\end{thm}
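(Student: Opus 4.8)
The plan is to show that $\lambda_{\T}$ is an approach structure, that $\id_{X}\in\cont\big((X,\lambda),(X,\lambda_{\T})\big)$, and that every contraction from $(X,\lambda)$ into an approach space remains a contraction out of $(X,\lambda_{\T})$, thus identifying $(X,\lambda_{\T})$ with the $\ap$-reflection. The preliminary observations are routine: $\F\leq\G$ gives $\G^{\#}\subset\F^{\#}$, so $\lambda_{\T}$ is monotone; $B\in(\{x\}^{\uparrow})^{\#}$ iff $x\in B$, in which case $1=\theta_{B}(x)\leq\cl_{\lambda}B(x)\leq 1$, so (\ref{eq:CAPcentered}) holds and $(X,\lambda_{\T})\in\Cap$; $\big(\bigcap_{\D\in\mathbb{D}}\D\big)^{\#}=\bigcup_{\D\in\mathbb{D}}\D^{\#}$ gives (\ref{eq:preAP}), so $(X,\lambda_{\T})$ is a pre-approach space; and for $B\in\F^{\#}$ we have $\lambda(\F)(x)\leq\adh_{\lambda}B(x)\leq\cl_{\lambda}B(x)$, so $\lambda\leq\lambda_{\T}$ and $\id_{X}$ is a contraction.

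The first substantial step is that $\cont(X,V)$ is the same subset of $V^{X}$ for $\lambda$ and for $\lambda_{\T}$, whence so are the corresponding lower-hull operators and, in particular, the closure functions: $\cl_{\lambda}=\cl_{\lambda_{\T}}$. One inclusion is immediate from $\lambda\leq\lambda_{\T}$ and (\ref{eq:contXtoV}). For the other, let $g$ be a $\lambda$-morphism into $V$, so $\cont(g)=g$, and fix $\F\in\mathbb{F}X$, $x\in X$; write $s=\limsup_{\F}g$ (the case $s=0$ being trivial, as $g(x)\oslash 0=1$). For each $0<s'<s$, the set $B_{s'}=\{t:g(t)\geq s'\}$ meshes with $\F$ (each $F\in\F$ has $\bigvee_{t\in F}g(t)\geq s>s'$, hence meets $B_{s'}$) and $g\geq\theta_{B_{s'}}\otimes s'$ pointwise; applying $\cont$ and Proposition \ref{prop:Cregfunctionframe}(4) gives $\cl_{\lambda}B_{s'}\otimes s'=\cont(\theta_{B_{s'}}\otimes s')\leq\cont(g)=g$, that is, $\cl_{\lambda}B_{s'}\leq g\oslash s'$ by (\ref{eq:adjointtensor-1}). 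Since $B_{s'}\in\F^{\#}$, $\lambda_{\T}(\F)(x)\leq\cl_{\lambda}B_{s'}(x)\leq g(x)\oslash s'$; taking the infimum over $s'$ and using (\ref{eq:oslahofsup}) yields $\lambda_{\T}(\F)(x)\leq g(x)\oslash s=g(x)\oslash\limsup_{\F}g$, so $g$ satisfies (\ref{eq:contXtoV}) for $\lambda_{\T}$.

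The second substantial step is that $\adh_{\lambda_{\T}}A=\cl_{\lambda}A$ for every $A\subset X$. The inequality $\leq$ holds since for $\U\in\beta A$ one has $\lambda_{\T}(\U)(\cdot)=\bigwedge_{C\in\U}\cl_{\lambda}C(\cdot)\leq\cl_{\lambda}A(\cdot)$ (here $\U^{\#}=\U$). For $\geq$, fix $x$ and $v'<\cl_{\lambda}A(x)$, with $A\neq\emptyset$ (the empty case being trivial): the family $\{A\}\cup\{X\setminus C:\cl_{\lambda}C(x)<v'\}$ has the finite intersection property, because any finite subfamily contains $A\setminus(C_{1}\cup\dots\cup C_{n})$ for suitable $C_{i}$ with $\cl_{\lambda}C_{i}(x)<v'$, and this is nonempty since otherwise $A\subset C_{1}\cup\dots\cup C_{n}$ would give, by Proposition \ref{prop:closurelikeKuratowski}(3), $\cl_{\lambda}A(x)\leq\bigvee_{i}\cl_{\lambda}C_{i}(x)<v'$, a contradiction. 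Extending this family to an ultrafilter $\U\in\beta A$, every $C\in\U$ satisfies $\cl_{\lambda}C(x)\geq v'$, so $\adh_{\lambda_{\T}}A(x)\geq\lambda_{\T}(\U)(x)=\bigwedge_{C\in\U}\cl_{\lambda}C(x)\geq v'$; let $v'\uparrow\cl_{\lambda}A(x)$. Now assemble: $\adh_{\lambda_{\T}}A=\cl_{\lambda}A=\cl_{\lambda_{\T}}A$ is a $\lambda_{\T}$-morphism into $V$ for every $A$, so by Corollary \ref{cor:approachadh} the pre-approach space $(X,\lambda_{\T})$ is an approach space; and if $(Y,\mu)$ is an approach space and $f\in\cont\big((X,\lambda),(Y,\mu)\big)$, then Proposition \ref{prop:preimageclosure} gives $\cl_{\lambda_{\T}}(f^{-}A)=\cl_{\lambda}(f^{-}A)\leq\cl_{\mu}A\circ f$ for every $A\subset Y$, hence, by its converse part, $f\in\cont\big((X,\lambda_{\T}),(Y,\mu)\big)$. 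With $\id_{X}\in\cont\big((X,\lambda),(X,\lambda_{\T})\big)$ this exhibits $(X,\lambda_{\T})$ as $\T X$.

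I expect the main obstacle to be the bootstrapping in the two substantial steps: recognizing $(X,\lambda_{\T})$ as an approach space needs control of its adherence, which is only available once the point-free closed sets $\cont(X,V)$ --- hence the closure functions --- are seen to be unaffected by the passage from $\lambda$ to $\lambda_{\T}$; the two delicate moves are extracting the meshing set $B_{s'}=\{g\geq s'\}$ from the $\limsup$ in (\ref{eq:contXtoV}) and the finite-intersection-property construction that realizes $\cl_{\lambda}A$ as a value of $\adh_{\lambda_{\T}}$, after which everything else is bookkeeping with Propositions \ref{prop:Cregfunctionframe}, \ref{prop:closurelikeKuratowski}, \ref{prop:preimageclosure} and Corollary \ref{cor:approachadh}. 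As an alternative to the second step, one can verify directly that $\lambda_{\T}(\F)(x)=\bigwedge_{g}\lambda_{V}(g[\F])(g(x))$, the infimum being over all $\lambda$-morphisms $g:X\to V$ --- $\leq$ being the first step and $\geq$ following from $\limsup_{\F}\cl_{\lambda}B=1$ for $B\in\F^{\#}$ --- so that $\lambda_{\T}$ is the initial structure of a source into the approach space $(V,\lambda_{V})$ of Corollary \ref{cor:Vap}, and hence an approach structure since $\ap$ is closed under initial sources in $\Cap$.
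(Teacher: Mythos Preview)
Your proof is correct but takes a genuinely different, more self-contained route than the paper. The paper's argument is very short: it checks $\id_{X}\in\cont(\lambda,\lambda')$ and then, for any contraction $f:(X,\lambda)\to(Y,\lambda_{Y})$ into an approach space, runs the direct inequality chain
\[
\lambda'(\F)(x)=\bigwedge_{B\#\F}\cl_{\lambda}B(x)\leq\bigwedge_{A\#f[\F]}\cl_{\lambda}(f^{-}A)(x)\leq\bigwedge_{A\#f[\F]}\cl_{Y}A(f(x))=\lambda_{Y}(f[\F])(f(x)),
\]
using only Proposition~\ref{prop:preimageclosure} and (\ref{eq:APcl}) in $Y$; it then declares $\lambda'$ to be the reflection. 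Notably, the paper does \emph{not} explicitly verify that $\lambda'$ is itself an approach space --- that is left implicit, to be recovered from the a-priori existence of the $\ap$-reflection together with (\ref{eq:sameCasT}) and (\ref{eq:APcl}). Your two substantial steps fill exactly this gap from scratch: the invariance $\cont((X,\lambda),V)=\cont((X,\lambda_{\T}),V)$ (via the level sets $B_{s'}$ and Proposition~\ref{prop:Cregfunctionframe}(4)) and the identification $\adh_{\lambda_{\T}}A=\cl_{\lambda}A$ (via the finite-intersection-property construction and Proposition~\ref{prop:closurelikeKuratowski}(3)) together with Corollary~\ref{cor:approachadh} give an internal proof that $\lambda_{\T}\in\ap$, after which your factorization step uses the \emph{converse} part of Proposition~\ref{prop:preimageclosure} rather than the paper's direct chain. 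What your approach buys is independence from the existence of the reflector and, as a by-product, the explicit formula $\adh_{\lambda_{\T}}=\cl_{\lambda}$; what the paper's approach buys is brevity. Your alternative ending --- recognizing $\lambda_{\T}$ as the initial lift of the source $\{g:X\to V\mid g\in\cont(\lambda,V)\}$ --- is essentially the computation behind the paper's subsequent corollary on initial density of $V$, read in the other direction.
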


\begin{proof}
Let $\lambda'$ be given by $\lambda'(\F)(x)=\bigwedge_{B\#\F}\cl_{\lambda}B(x)$
where $\cl_{\lambda}B=\cont(\theta_{B})$ in $\cont((X,\lambda),V)$.
Note that $\id_{X}\in\cont(\lambda,\lambda')$: for every $B\in\F^{\#}$,
$\lambda(\F)(\cdot)\leq\adh B(\cdot)\leq\cl_{\lambda}B(\cdot)$.

Let $(Y,\lambda_{Y})$ be an approach space and let $f\in\cont(\lambda,\lambda_{Y})$,
that is, 
\[
\lambda(\F)(x)\leq\lambda_{Y}(f[\F])(f(x))=\bigwedge_{A\#f[\F]}\cl_{Y}A(f(x)).
\]
 Then $f\in\cont(\lambda',\lambda_{Y})$. Indeed, $f^{-}(A)\#\F$
and $\cl_{\lambda}(f^{-}A)\leq\cl_{Y}A\circ f$ by Proposition \ref{prop:preimageclosure},
so that 
\begin{eqnarray*}
\lambda(\F)(x)=\bigwedge_{B\#\F}\cl_{\lambda}B(x) & \leq & \bigwedge_{A\#f[\F]}\cl_{\lambda}(f^{-}A)(x)\\
 & \leq & \bigwedge_{A\#f[\F]}\cl_{Y}A(f(x))=\lambda_{Y}(f[\F])(f(x)).
\end{eqnarray*}
Hence $\lambda'$ is the approach reflection of $\lambda$. 
\end{proof}
Note that since $V$ is an approach space and $\ap$ is a concretely
reflective subcategory of $\Cap$ with reflector $\T$, 
\begin{equation}
\cont(X,V)=\cont(\T X,V).\label{eq:sameCasT}
\end{equation}

\begin{cor}
\cite{lowe88} $V$ is initially dense in $\ap$, hence the reflective
hull of $V$ in $\mathbf{\Cap}$ is $\mathsf{Ap}$.
\end{cor}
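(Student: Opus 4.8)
The plan is to derive both statements from Theorem \ref{thm:approachreflection} together with standard facts about reflective subcategories. Recall that $\cont(\T X,V)=\cont(X,V)$ by \eqref{eq:sameCasT}, and that $(V,\lambda_V)$ is an approach space by Corollary \ref{cor:Vap}. First I would argue initial density: we must show that for any approach space $(X,\lambda)$, the source $(f:X\to V)_{f\in\cont(X,V)}$ is initial in $\Cap$ (equivalently in $\ap$). By \eqref{eq:initialCAP}, the initial structure $\lambda_{\mathrm{init}}$ induced on $X$ by all morphisms $f\in\cont(X,V)$ satisfies
\[
\lambda_{\mathrm{init}}(\F)(x)=\bigwedge_{f\in\cont(X,V)}\lambda_V(f[\F])(f(x))=\bigwedge_{f\in\cont(X,V)}\bigl(f(x)\oslash\limsup_{\F}f\bigr).
\]
Since each $f\in\cont(X,V)$ is a morphism $\lambda\to\lambda_V$, we have $\lambda(\F)(x)\le f(x)\oslash\limsup_{\F}f$ for all such $f$, hence $\lambda(\F)(x)\le\lambda_{\mathrm{init}}(\F)(x)$. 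For the reverse inequality, the key observation is that for $(X,\lambda)$ an approach space, $\adh B\in\cont(X,V)$ for every $B\subset X$ by Corollary \ref{cor:approachadh}, and by \eqref{eq:APcl} (or directly from Corollary \ref{cor:approachadh} since approach spaces are pre-approach spaces) we have $\lambda(\F)(x)=\bigwedge_{B\#\F}\adh B(x)=\bigwedge_{B\#\F}\cl_\lambda B(x)$. Thus taking the infimum over the particular morphisms $f=\cl_\lambda B$ with $B\#\F$ already recovers $\lambda(\F)(x)$ on the side $\lambda_{\mathrm{init}}(\F)(x)\le\bigwedge_{B\#\F}\bigl(\cl_\lambda B(x)\oslash\limsup_{\F}\cl_\lambda B\bigr)$; but since $\cl_\lambda B$ is a morphism this last quantity is $\ge\lambda(\F)(x)$, which is not yet what we want, so instead I would directly bound $\lambda_{\mathrm{init}}(\F)(x)\le\bigwedge_{B\#\F}\lambda_V(\cl_\lambda B[\F])(\cl_\lambda B(x))$ and use $\cl_\lambda B[\F]\ge\{[\,\bigwedge\cl_\lambda B(F):F\in\F\,]\}$ — more cleanly, since $B\in\F^{\#}$ forces $\limsup_\F\theta_B=1$ hence $\limsup_\F\cl_\lambda B\ge 1$, giving $\lambda_V(\cl_\lambda B[\F])(\cl_\lambda B(x))=\cl_\lambda B(x)\oslash 1=\cl_\lambda B(x)$, so $\lambda_{\mathrm{init}}(\F)(x)\le\bigwedge_{B\#\F}\cl_\lambda B(x)=\lambda(\F)(x)$ by \eqref{eq:APcl}. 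This proves $\lambda_{\mathrm{init}}=\lambda$, i.e. $V$ is initially dense in $\ap$.

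The main obstacle is precisely this last computation: one must exploit that $B\#\F$ makes $\limsup_\F\cl_\lambda B=1$ (because $\theta_B\le\cl_\lambda B$ and $\limsup_\F\theta_B=1$ by \eqref{eq:grilltheta}), so that the $V$-valued convergence of the image filter $\cl_\lambda B[\F]$ to the point $\cl_\lambda B(x)$ is exactly $\cl_\lambda B(x)$ — no loss. Restricting the infimum defining $\lambda_{\mathrm{init}}$ to these particular morphisms therefore recovers $\bigwedge_{B\#\F}\cl_\lambda B(x)$, which equals $\lambda(\F)(x)$ by \eqref{eq:APcl}. Combined with the trivial inequality $\lambda\le\lambda_{\mathrm{init}}$ (valid whenever $(X,\lambda)$ is any object admitting these morphisms, since they are morphisms), this yields equality. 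Hence every approach space carries the initial structure induced by its $V$-valued morphisms, which is the definition of $V$ being initially dense in $\ap$.

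Finally, for the statement about the reflective hull in $\Cap$: it is a general categorical fact that if $\mathcal{B}$ is a concretely reflective subcategory of a topological category $\mathcal{C}$ and an object $V$ is initially dense in $\mathcal{B}$, then the reflective hull of $V$ in $\mathcal{C}$ equals $\mathcal{B}$. Indeed $\ap$ is concretely reflective in $\Cap$ with reflector $\T$ (stated in the excerpt, with the explicit formula in Theorem \ref{thm:approachreflection}), so the reflective hull of $V$ in $\Cap$ is contained in $\ap$; conversely, since $V\in\ap$ and $\ap$ is closed under initial sources and products in $\Cap$ (being reflective), and every approach space is an initial subspace of a power of $V$ by the initial density just established, the reflective hull of $\{V\}$ in $\Cap$ contains all of $\ap$. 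Therefore the reflective hull of $V$ in $\Cap$ is exactly $\ap$, as claimed.
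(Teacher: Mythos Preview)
Your proof is correct and follows essentially the same approach as the paper: the key computation is identical, namely that for $B\in\F^{\#}$ one has $\limsup_{\F}\cl_{\lambda}B=1$ (via $\theta_{B}\leq\cl_{\lambda}B$ and \eqref{eq:grilltheta}), so $\lambda_{V}(\cl_{\lambda}B[\F])(\cl_{\lambda}B(x))=\cl_{\lambda}B(x)$, whence the initial structure coincides with $\lambda$ by \eqref{eq:APcl}. The only cosmetic difference is that the paper takes $\{\cl_{\lambda}B:B\subset X\}$ as the initial source from the outset (using $v\oslash t\geq v$ to handle $B\notin\F^{\#}$), whereas you start with all of $\cont(X,V)$ and then restrict; the substance is the same.
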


\begin{proof}
Though the result is well-known, it is interesting to note that if
$(X,\lambda)$ is an approach space then $\{\cl_{\lambda}B:B\subset X\}$
is a specific initial family of maps to $V$. Indeed, given $\F\in\mathbb{F}X$
and $B\subset X$,
\begin{equation}
\lambda_{V}(\cl_{\lambda}B[\F])(\cl_{\lambda}B(x))\geq\cl_{\lambda}B(x),\label{eq:initial}
\end{equation}
with equality whenever $B\in\F^{\#}$. Hence, in view of (\ref{eq:APcl})
and (\ref{eq:initialCAP}),
\[
\lambda(\F)(x)=\bigwedge_{B\#\F}\cl_{\lambda}B(x)=\bigwedge_{B\subset X}\lambda_{V}(\cl_{\lambda}B[\F])(\cl_{\lambda}B(x)).
\]
To see (\ref{eq:initial}), assume $B\in\F^{\#}$ so that $\limsup_{\F}\theta_{B}=1$
by (\ref{eq:grilltheta}), and thus $\limsup_{\F}\cl B=1$ because
$\theta_{B}\leq\cl B$. Then 
\[
\lambda_{V}(\cl_{\lambda}B[\F])(\cl_{\lambda}B(x))=\cl_{\lambda}B(x)\oslash\limsup_{\F}\cl_{\lambda}B=\cl_{\lambda}B(x).
\]
On the other hand, $v\oslash t\geq v$ so that $\cl_{\lambda}B(x)\oslash\limsup_{\F}\cl_{\lambda}B\geq\cl_{\lambda}B(x)$.
\end{proof}
Therefore, given a convergence approach space $(X,\lambda)$, the
set $\cont(X,V)$ determines its $\ap$-reflection $(X,\T\lambda)$.
Specifically,
\begin{thm}
\label{thm:Clowerregframe} Given a convergence approach space $(X,\lambda)$
the map $\cont:V^{X}\to\cont(X,V)\subset V^{X}$ given by \emph{(\ref{eq:C(f)})}
is the lower-hull operator for $(X,\T\lambda)$ and the corresponding
lower regular function frame is $\cont(X,V)$. 
\end{thm}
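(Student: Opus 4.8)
The plan is to read the statement off from Proposition \ref{prop:Cregfunctionframe}, the discussion of $\cont(X,V)$ preceding (\ref{eq:C(f)}), and the classical correspondence between approach structures and lower-hull operators. Recall from \cite{indextheory} (see also \cite{AP.book}) that an approach structure on a set $Y$ is equivalent to a lower-hull operator on $V^{Y}$: the lower regular function frame of an approach space $(Y,\mu)$ is exactly the set of contractions $(Y,\mu)\to(V,\lambda_{V})$, i.e.\ $\cont((Y,\mu),V)$ in our notation, its lower-hull operator is the map sending $f\in V^{Y}$ to the least regular function above it, namely $f\mapsto\bigwedge\{g\in\cont((Y,\mu),V):f\leq g\}$, and this assignment is a bijection, so in particular an approach space is determined by its lower regular function frame. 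By Proposition \ref{prop:Cregfunctionframe}, together with the facts established above that $\cont(X,V)$ is closed under arbitrary pointwise infima, finite pointwise suprema and the operations $-\otimes v$, the map $\cont$ is itself a lower-hull operator on $V^{X}$ with associated frame $\cont(X,V)$; the substance of the theorem is therefore to identify $\cont$ with the lower-hull operator canonically attached to the \emph{particular} approach space $(X,\T\lambda)$.

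For that I would invoke (\ref{eq:sameCasT}): since $\ap$ is concretely reflective in $\Cap$ with reflector $\T$ and $V$ is an approach space, one has $\cont(X,V)=\cont(\T X,V)$. Applying the classical description above to the approach space $\T X=(X,\T\lambda)$ then shows that its lower regular function frame is $\cont(\T X,V)=\cont(X,V)$ and that its lower-hull operator is $f\mapsto\bigwedge\{g\in\cont(\T X,V):f\leq g\}=\bigwedge\{g\in\cont(X,V):f\leq g\}=\cont(f)$ by (\ref{eq:C(f)}), which is exactly the assertion. As a consistency check one may note that the approach limit recovered from this frame, $\F\mapsto\bigwedge_{B\#\F}\cont(\theta_{B})=\bigwedge_{B\#\F}\cl_{\lambda}B$, is $\lambda_{\T}$ by Theorem \ref{thm:approachreflection}, in accordance with $\{\cl_{\lambda}B:B\subset X\}$ being an initial source into $V$; equivalently, the associated distance $A\mapsto\cont(\theta_{A})=\cl_{\lambda}A$ agrees with $\adh_{\T\lambda}A$, and distance determines the approach space.

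The only genuine obstacle is conventional bookkeeping: \cite{indextheory} works in $V=([0,\infty]^{op},+)$ and phrases hull operators and regular functions in that order (indeed, in the order convention of \cite{indextheory} the operator $\cont$ is an \emph{upper}-hull operator), so the cited statements must be transported through the quantale isomorphism $-\ln$, keeping track that the value of the indicator function $\theta_{B}$ off $B$ is the bottom of $V$. No new analytic input beyond Section \ref{sec:Closure} is required.
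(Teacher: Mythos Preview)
Your argument is correct. It differs from the paper's proof mainly in where the identification with $\T\lambda$ happens. The paper first uses Proposition~\ref{prop:Cregfunctionframe} to see that $\cont$ is a lower-hull operator, reads off its fixed-point set $\cont(X,V)$ as the associated lower regular function frame via \cite[Theorem~1.2.24]{indextheory}, and then \emph{computes the induced distance}: by \cite[Theorem~1.2.45]{indextheory}, $\delta(x,A)=\bigwedge\{f(x):f\in\cont(X,V),\,f_{|A}=1\}$, and since $f_{|A}=1$ iff $\theta_{A}\leq f$, this infimum is $\cl_{\lambda}A(x)=\adh_{\T\lambda}A(x)$, pinning the approach structure down as $\T\lambda$. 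You instead go through (\ref{eq:sameCasT}) and \cite[Proposition~1.3.5]{indextheory} applied to the approach space $\T X$: since $\cont(X,V)=\cont(\T X,V)$, the hull operator built from $\cont(X,V)$ is literally the lower-hull operator of $\T\lambda$. Your route is a bit more categorical and leans on the black-box identification of the regular function frame with $\cont(\T X,V)$; the paper's route is more computational and makes the link with Theorem~\ref{thm:approachreflection} explicit through the distance. What you call a ``consistency check'' is essentially the paper's main step.
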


\begin{proof}
Proposition \ref{prop:Cregfunctionframe} establishes that $\cont$
is a lower-hull operator. The corresponding lower regular function
frame is formed by the fixed points of $\cont$ by , e.g., \cite[Theorem 1.2.24]{indextheory},
that is, $\cont(X,V)$. Moreover, the corresponding approach distance
is given by (e.g., \cite[Theorem 1.2.45]{indextheory})
\[
\delta(x,A)=\bigwedge_{f\in\cont(X,V),f_{|A}=1}f.
\]

As $f_{|A}=1$ if and only if $\theta_{A}\leq f$, we conclude that
$\delta(x,A)=\cl A(x)=\adh_{\T\lambda}A(x)$, which concludes the
proof.
\end{proof}
\begin{rem}
Note that in the case of an approach space, that the lower regular
function frame is $\cont(X,V)$ is \cite[Proposition 1.3.5]{indextheory}.
Moreover, by \cite[Proposition 1.1.30]{indextheory}, $\cont$ is
completely determined by $\cl:\mathbb{P}X\to V^{X}$ via 
\[
\cont(f)(\cdot)=\bigwedge_{\epsilon<1}\vee_{i=1}^{n(\epsilon)}m_{i}^{\epsilon}\otimes\cl(M_{i}^{\epsilon})(\cdot)
\]
where $(\mu_{\epsilon}=\vee_{i=1}^{n(\epsilon)}m_{i}^{\epsilon}\otimes\theta_{M_{i}^{\epsilon}}))_{\epsilon<1}$
\emph{is a development of} $f$, that is, $\mu_{\epsilon}\geq f\geq\mu_{\epsilon}\otimes\epsilon$
for every $\epsilon<1$.
\end{rem}

\section{$\protect\ap$ as pointfree convergence spaces}

\cite{myn.ptfreeAP} offered an embedding of $\Cap$ into the category
of pointfree convergence spaces (as introduced in \cite{FredetJean})
over various categories $\mathsf{C}$ of lattices. A missing aspect
of \cite{myn.ptfreeAP} is to offer a characterization of $\ap$ within
this representation. The first part of this paper puts us in a position
to do so. Let us first recall the setting developed in \cite{FredetJean}
for pointfree convergence. 

\subsection{Recap on pointfree convergence and $\protect\Cap$ as pointfree convergence}

Let us now use for the rest of the paper a different convention: given
a lattice $L$, $\mathbb{F}L$ now denotes the set of \emph{order
theoretic }filters on $L$ (so that the set-theoretic filters on $X$
are now order theoretic filters on the powerset $\mathbb{P}X$ and
thus denoted $\mathbb{FP}X$).

In this setting, the function 
\[
\lm_{\xi}:\mathbb{FP}X\to\mathbb{P}X
\]
determining a convergence structure on $X$ is abstracted away to
a monotone function 
\[
\lim:\mathbb{F}L\to L
\]
from (order-theoretic) filters on a lattice $L$ to $L$. (\ref{eq:centeredconv})
is not part of the axiomatic in this pointfree version of convergence
spaces, though the notion can also be recovered (as so-called \emph{centered
}convergence lattices) in an abstract order-theoretic form, which
will naturally appear in the case of convergence approach spaces. 
\begin{defn}
\cite{FredetJean} Given a category $\mathsf{C}$ of lattices, a\emph{
convergence $\mathsf{C}$-object $(L,\lim)$ }is a $\mathsf{C}$-object
$L$ together with a monotone map $\lim:\mathbb{F}L\to L$. The objects
of the category $\Cconv$ are the convergence $\mathsf{C}$-objects
and the morphisms $\varphi:L\to L'$ are the $\mathsf{C}$-morphisms
that are \emph{continuous }in the sense that for every $\F\in\mathbb{F}L'$,
\begin{equation}
\lm_{L'}\F\leq\varphi(\lm_{L}\varphi^{-1}(\F)),\tag{ptfree continuity}\label{eq:ptfreeCont}
\end{equation}
 where $\varphi^{-1}(\F)=\{\ell\in L:\varphi(\ell)\in\F\}$.
\global\long\def\pt{\operatorname{pt}}%
\end{defn}

The category $\conv$ embeds coreflectively into $(\Cconv)^{op}$
when $\mathsf{C}$ is the category of frames or of coframes: the \emph{powerset
functor} $\mathbb{P}:\conv\to(\Cconv)^{op}$ sending $(X,\xi)$ to
$(\mathbb{P}X,\lm_{\xi})$ and $f:(X,\xi)\to(Y,\tau)$ to $\text{\ensuremath{\mathbb{P}f=}}f^{-1}:\mathbb{P}Y\to\mathbb{P}X$
(in $\Cconv$) is then right-adjoint to the \emph{point-functor} $\pt:(\Cconv)^{op}\to\conv$
(the coreflector) where the underlying set of $\pt L$, the set of
``points'' of $L$, is the set of $\Cconv$-morphisms from $L$
to $\mathbb{P}(1)$, hence depends on the choice of $\mathsf{C}$.
The convergence structure on $\pt L$ is given by 
\begin{equation}
\tag{Conv on pt}\lm_{\pt L}\F=\left(\lm_{L}\F^{\circ}\right)^{\bullet},\label{eq:ptconv}
\end{equation}
where $\ell^{\bullet}=\{\varphi\in\pt L:\varphi(\ell)=\{1\}\}$ and
$\F^{\circ}=\left\{ \ell\in L:\ell^{\bullet}\in\F\right\} $. Finally,
if $\varphi\in\Cconv(L,L')$ then $\pt\varphi:\pt L'\to\pt L$ is
defined by $\pt(\varphi)(f)=f\circ\varphi$.

Given a category $\mathsf{C}$ of inf-semilattices and a $\mathsf{C}$-object
$V$, the \emph{contravariant} \emph{function space functor $V^{(-)}:\mathsf{Set}\to\mathsf{C}$
}defined on object by $V^{(-)}X=V^{X}$ (ordered pointwise) and on
morphisms $f:X\to Y$ by $V^{(-)}f:V^{Y}\to V^{X}$ is defined by
$V^{(-)}(f)(h)=h\circ f$ lifts to a (contravariant) embedding functor
$V^{(-)}:\Cap\to(\Cconv)^{op}$. Namely, If $\mathsf{C}$ is a category
of inf-semilattices and $V$ is a $\mathsf{C}$-object, we say that
$\mathsf{C}$ is $V^{(-)}$\emph{-compatible }if $V^{(-)}(f)\in\mathbf{\mathsf{C}}(V^{Y},V^{X})$
whenever $f\in\mathsf{C}(X,Y)$. In particular, the categories of
lattices, of frames, of coframes, are all admissible hence $V^{(-)}$-compatible
for any choice of object $V$. Note that $\theta:\mathbb{P}X\to V^{X}$
defined by $\theta(A)=\theta_{A}$ is injective, so that we can consider
$\mathbb{P}X$ a subset of $V^{X}$.
\begin{prop}
\label{prop:Vfunctor}\cite[Proposition 4]{myn.ptfreeAP} Let $\mathsf{C}$
be a $V^{(-)}$-compatible category of complete lattices and $V$
a $\mathsf{C}$-object with $\card V\geq2$. If $(X,\lambda)$ is
a $\Cap$-object, define on $V^{X}$ the limit $\lim_{\lambda}:\mathbb{F}V^{X}\to V^{X}$
by 
\begin{equation}
\lm_{\lambda}\F=\lambda(\theta^{-}(\F)),\label{eq:limlambda}
\end{equation}
and if $f\in\Cap(X,Y)$ then $V^{(-)}(f):(V^{Y},\lim_{\lambda_{Y}})\to(V^{X},\lim_{\lambda_{X}})$
satisfies \emph{(}\ref{eq:ptfreeCont}\emph{)}. This defines a functor
$V^{(-)}:\Cap\to(\Cconv)^{op}$, which is an embedding (in the sense
of \cite{Categories}). 
\end{prop}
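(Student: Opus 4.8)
The plan is to verify, in turn: (i) that $\lm_{\lambda}$ is a legitimate convergence $\mathsf{C}$-structure on $V^{X}$; (ii) that $V^{(-)}(f)$ satisfies (\ref{eq:ptfreeCont}) whenever $f$ is a contraction; (iii) that these assignments are functorial; and (iv) that the resulting functor is injective on objects and faithful, hence an embedding in the sense of \cite{Categories}. Everything will rest on the three elementary identities $\theta_{A}\leq\theta_{B}\iff A\subset B$, $\theta_{A}\wedge\theta_{B}=\theta_{A\cap B}$ in $V^{X}$, and $\theta_{B}\circ f=\theta_{f^{-1}(B)}$.

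For (i): given $\F\in\mathbb{F}V^{X}$, I would first check that $\theta^{-}(\F)=\{A\subset X:\theta_{A}\in\F\}$ is an order-theoretic filter on $\mathbb{P}X$: upward closure of $\F$ together with $\theta_{A}\leq\theta_{B}\iff A\subset B$ gives upward closure of $\theta^{-}(\F)$, while closure of $\F$ under finite meets together with $\theta_{A}\wedge\theta_{B}=\theta_{A\cap B}$ gives closure of $\theta^{-}(\F)$ under finite intersections. Hence $\lm_{\lambda}=\lambda\circ\theta^{-}\colon\mathbb{F}V^{X}\to V^{X}$ is well defined; it is monotone because $\theta^{-}$ is inclusion-preserving and $\lambda$ is monotone, and since $V^{X}$ is a $\mathsf{C}$-object by $V^{(-)}$-compatibility and monotonicity of the limit is the only further axiom, $(V^{X},\lm_{\lambda})$ is a convergence $\mathsf{C}$-object.

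For (ii) and (iii): write $\varphi=V^{(-)}(f)\colon V^{Y}\to V^{X}$, $\varphi(h)=h\circ f$. The crux is the identity $\theta^{-}(\varphi^{-1}(\F))=f[\theta^{-}(\F)]$ for $\F\in\mathbb{F}V^{X}$: from $\theta_{B}\circ f=\theta_{f^{-1}(B)}$ one gets $B\in\theta^{-}(\varphi^{-1}(\F))\iff f^{-1}(B)\in\theta^{-}(\F)$, and $\{B\subset Y:f^{-1}(B)\in\G\}=f[\G]$ for every set-theoretic filter $\G$ on $X$ (one inclusion uses $f(f^{-1}(B))\subset B$, the other uses $G\subset f^{-1}(f(G))$ and upward closure). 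By the very definition of $\varphi$, $\varphi\big(\lm_{\lambda_{Y}}\varphi^{-1}(\F)\big)=\lambda_{Y}\big(\theta^{-}(\varphi^{-1}(\F))\big)\circ f=\lambda_{Y}\big(f[\theta^{-}(\F)]\big)\circ f$, so the inequality $\lm_{\lambda_{X}}\F\leq\varphi\big(\lm_{\lambda_{Y}}\varphi^{-1}(\F)\big)$ demanded by (\ref{eq:ptfreeCont}) reads $\lambda_{X}(\theta^{-}(\F))\leq\lambda_{Y}(f[\theta^{-}(\F)])\circ f$, which is exactly the defining contraction inequality for $f$ applied to the filter $\theta^{-}(\F)$. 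Thus $\varphi$ is continuous, and being a $\mathsf{C}$-morphism by $V^{(-)}$-compatibility it is a $\Cconv$-morphism; functoriality then follows because $V^{(-)}$ is already a contravariant functor $\mathsf{Set}\to\mathsf{C}$ with $V^{(-)}(\id_{X})=\id_{V^{X}}$ and $V^{(-)}(g\circ f)=V^{(-)}(f)\circ V^{(-)}(g)$, whose terms are now all $\Cconv$-morphisms.

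For (iv): faithfulness is immediate from $\card V\geq2$ --- if $f(x)\neq g(x)$, choose $h\colon Y\to V$ with $h(f(x))\neq h(g(x))$, and then $V^{(-)}(f)(h)$ and $V^{(-)}(g)(h)$ differ at $x$. For injectivity on objects, the underlying set is recoverable from $V^{X}$ (for instance as the common domain of its elements, a non-degenerate set since $\card V\geq2$), so equal images force $X=X'$ and $\lm_{\lambda_{X}}=\lm_{\lambda_{X'}}$; and for any $\G\in\mathbb{FP}X$ the filter $\F$ on $V^{X}$ generated by $\{\theta_{A}:A\in\G\}$ satisfies $\theta^{-}(\F)=\G$ (once more by $\theta_{A}\wedge\theta_{B}=\theta_{A\cap B}$ and the filter axioms on $\G$), whence $\lambda_{X}(\G)=\lm_{\lambda_{X}}\F=\lm_{\lambda_{X'}}\F=\lambda_{X'}(\G)$, i.e.\ $\lambda_{X}=\lambda_{X'}$. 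I expect the only delicate step to be (ii): it amounts to carefully unwinding the several order-reversed conventions and spotting $\theta^{-}(\varphi^{-1}(\F))=f[\theta^{-}(\F)]$, after which continuity of $V^{(-)}(f)$ is literally the contraction condition and the rest is bookkeeping.
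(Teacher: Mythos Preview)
The paper does not supply its own proof of this proposition: it is quoted verbatim as \cite[Proposition 4]{myn.ptfreeAP} and left unproved here, so there is no in-paper argument to compare against. Your proof is correct and self-contained; the pivotal observation is exactly the one you isolate, namely that $\theta^{-}\big((V^{(-)}f)^{-1}(\F)\big)=f[\theta^{-}(\F)]$ via $\theta_{B}\circ f=\theta_{f^{-1}(B)}$, which collapses (\ref{eq:ptfreeCont}) to the contraction inequality for $f$. The only place one might ask for a word more is injectivity on objects: your recovery of $X$ from $V^{X}$ as ``the common domain'' presupposes that equality of $\mathsf{C}$-objects is literal set equality of function spaces, which is the intended concrete reading here but is worth making explicit; once $X=X'$ is secured, your use of the filter generated by $\{\theta_{A}:A\in\G\}$ to recover $\lambda$ from $\lm_{\lambda}$ is clean and matches the inverse construction (\ref{eq:lambdalim}) the paper records later.
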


We will now focus on the case were $\mathsf{C}$ is a category of
frames. 
\begin{thm}
\cite[Theorem 7]{myn.ptfreeAP} If $(X,\lambda)$ is a $\Cap$-object
and $\mathsf{C}$ is the category of frames then $\pt\circ V^{(-)}(X)$
can be identified with the convergence space $(X\times V,\xi)$ where
\[
\forall v<1\;(x,v)\in\lm_{\xi}\F\iff\lambda(p_{X}[\F])(x)>v,
\]
and 
\[
(x,1)\in\lm_{\xi}\F\iff\lambda(p_{X}[\F])(x)=1.
\]
\[
\xymatrix{(\Cconv)^{op}\ar[r]^{\pt} & \conv\\
\Cap\ar[u]_{V^{(-)}}\ar[ur]_{C}
}
\]

Moreover, the functor $C=\pt\circ V^{(-)}:\Cap\to\conv$ is an embedding.
\end{thm}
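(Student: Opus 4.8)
\emph{Proof plan.} The plan is to compute $\pt\circ V^{(-)}(X)$ by unwinding the two functors one after the other. By Proposition~\ref{prop:Vfunctor}, $V^{(-)}(X)$ is the frame $V^{X}$ (ordered pointwise) equipped with $\lm_{\lambda}\F=\lambda(\theta^{-}(\F))$, where $\theta^{-}(\F)=\{A\subseteq X:\theta_{A}\in\F\}$ is a set-theoretic filter on $X$, and since $\mathsf{C}$ is the category of frames, $\mathbb{P}(1)$ is the two-element frame. So the underlying set of $\pt\bigl(V^{(-)}(X)\bigr)$ is the set of continuous frame homomorphisms $\varphi:V^{X}\to\mathbb{P}(1)$, and I would first identify this set with $X\times V$ via the order-theoretic description of the points of $V^{X}$: the pair $(x,v)$ corresponds to the homomorphism $\varphi_{x,v}$ with $\varphi_{x,v}(h)=\{1\}\iff h(x)>v$ when $v<1$, and $\varphi_{x,1}(h)=\{1\}\iff h(x)=1$. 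Continuity of each $\varphi_{x,v}$ in the sense of (\ref{eq:ptfreeCont}) is then automatic: the only non-trivial filter on $\mathbb{P}(1)$ to test is the principal filter of its top, for which the relevant preimage filter has $\theta^{-}$-image the principal ultrafilter $\{x\}^{\uparrow}$ (because $\theta_{A}(x)>v\iff x\in A$ for every $v<1$ and $\theta_{A}(x)=1\iff x\in A$), so that continuity reduces to $\lambda(\{x\}^{\uparrow})(x)>v$, which is the centeredness axiom (\ref{eq:CAPcentered}).

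Next I would push a filter $\F$ (a set-theoretic filter on $X\times V$, identified with a filter on the point set) through the formula (\ref{eq:ptconv}), namely $\lm_{\pt L}\F=\bigl(\lm_{L}\F^{\circ}\bigr)^{\bullet}$ with $L=V^{X}$. The single useful observation here is the identity $\theta_{A}^{\bullet}=A\times V$ for every $A\subseteq X$, immediate from the description of the points above. It gives
\[
\theta^{-}(\F^{\circ})=\{A\subseteq X:\theta_{A}^{\bullet}\in\F\}=\{A\subseteq X:p_{X}^{-1}(A)\in\F\}=p_{X}[\F],
\]
the image filter under the first projection $p_{X}:X\times V\to X$, so that $\lm_{L}\F^{\circ}=\lambda\bigl(\theta^{-}(\F^{\circ})\bigr)=\lambda(p_{X}[\F])\in V^{X}$; applying $(\cdot)^{\bullet}$ and the description of the points once more yields
\[
\lm_{\pt L}\F=\bigl(\lambda(p_{X}[\F])\bigr)^{\bullet}=\{(x,v):v<1,\ \lambda(p_{X}[\F])(x)>v\}\cup\{(x,1):\lambda(p_{X}[\F])(x)=1\},
\]
which is precisely the convergence $\xi$ asserted in the statement.

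For the embedding claim, $C=\pt\circ V^{(-)}$ is a functor because $V^{(-)}$ and $\pt$ are, and the triangle commutes by definition of $C$. For $f\in\Cap(X,Y)$, unwinding $\pt(V^{(-)}f)$ through the identification of the first paragraph gives $C(f)(x,v)=(f(x),v)$, i.e.\ $C(f)=f\times\id_{V}$; its continuity $(X\times V,\xi)\to(Y\times V,\xi)$ is automatic from functoriality (or follows directly from the contraction inequality $\lambda_{Y}(f[\G])\circ f\geq\lambda_{X}(\G)$ applied with $\G=p_{X}[\F]$ together with the displayed description of $\xi$). Since $V\neq\emptyset$, $f\times\id_{V}$ determines $f$, so $C$ is faithful; and since the underlying set $X$ (as the first-coordinate set) and then the structure $\lambda$ (via the description of $\xi$ and (\ref{eq:CAPcentered})) are recoverable from $(X\times V,\xi)$, the functor $C$ is injective on objects. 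Hence $C$ is an embedding in the sense of \cite{Categories}.

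The main obstacle is the first step: carrying out correctly the order-theoretic identification of the points of the frame $V^{X}$, in particular treating the boundary layer $v=1$ and checking that the continuity requirement is vacuous on all of these homomorphisms. Once that identification is in place the rest is formal, the only genuine content being the clean equality $\theta_{A}^{\bullet}=A\times V$, which is exactly what forces the projection filter $p_{X}[\F]$ — and hence the original approach structure $\lambda$ — to reappear on the point side.
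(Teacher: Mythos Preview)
This theorem is not proved in the present paper; it is quoted verbatim from \cite[Theorem 7]{myn.ptfreeAP} as part of the background recap in Section~3.1, so there is no in-paper argument to compare your proposal against.

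On the substance of your sketch: the computation of the convergence via the identity $\theta_{A}^{\bullet}=A\times V$, giving $\theta^{-}(\F^{\circ})=p_{X}[\F]$ and hence $\lm_{L}\F^{\circ}=\lambda(p_{X}[\F])$, is clean and is exactly the right mechanism once the point set is in hand; the embedding argument via $C(f)=f\times\id_{V}$ is also fine. The gap is precisely where you yourself locate it, but your proposed handling of the boundary layer does not work as written. Your candidate $\varphi_{x,1}$, defined by $\varphi_{x,1}(h)=\{1\}\iff h(x)=1$, is \emph{not} a frame homomorphism when $V=[0,1]$: the filter $\{h\in V^{X}:h(x)=1\}$ fails to be completely prime, since e.g.\ the constant functions $h_{n}\equiv 1-\tfrac{1}{n}$ have $\bigvee_{n}h_{n}=1$ while no single $h_{n}$ lies in the filter. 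So the underlying set of $\pt(V^{X})$ is not put in bijection with $X\times V$ by the map you describe, and for infinite $X$ you should also worry about frame points of $V^{X}$ not arising from evaluation at a single $x$. You will need to consult \cite{myn.ptfreeAP} for the actual parametrisation (and for what ``can be identified with'' means precisely), since the present paper does not supply those details.
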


Frames are pseudocomplemented. Let 
\[
a^{*}:=a\to\bot=\bigvee\{b\in L:a\wedge b=\bot\}
\]
 denote the pseudocomplement of $a$ in a frame $L$. Note also that
\cite[Appendix I, Proposition 7.1.2]{MR2868166}
\begin{equation}
(a\wedge b)^{**}=a^{**}\wedge b^{**},\label{eq:filter**}
\end{equation}
for every $a,b\in L$. As a result of (\ref{eq:filter**}), we have
\[
\F\in\mathbb{F}L\then\F^{**}:=\uparrow\{m^{**}:m\in\F\}\in\mathbb{F}L,
\]
and because $a^{***}=a^{*}$, $\F^{**}\subset\F.$ Hence, in a convergence
frame $\lim\F^{**}\leq\lim\F$.

Following \cite{convframes}, we call a convergence frame $**$-\emph{regular}
if 
\begin{equation}
\forall\F\in\mathbb{F}L,\,\lim\F=\lim(\F^{**}).\label{eq:doublestarlim}
\end{equation}

Of course, if $L$ is Boolean then $\cdot^{**}$ is the identity and
any convergence frame structure on $L$ is then $**$-regular. Note
that, despite trivializing in the Boolean case, this property is indeed
akin to regularity: a convergence space is \emph{regular }if for every
filter $\F$ the coarser filter $\adh^{\natural}\F:=\{\adh F:F\in\F\}^{\uparrow}$
has the same limit as $\F$. $**$-regularity is a similar concept
where $\adh$ is replaced by the operation $\cdot{}^{**}$. Similar
notions of regularity with respect to different operators or with
respect to a family of sets have been considered among others in \cite{DM.book}
(\footnote{The family $\{\ell\in L:\ell=\ell^{**}\}$ would play the role of
the lattice-theoretic analog of the family $\mathcal{Z}$ considered
in \cite[VII.2]{DM.book}.}). 

Given a $\mathsf{C}$-object $L$ and $\F,\G\subset L$, we say that
$\F$ and $\G$ \emph{mesh}, in symbols $\F\#\G$ if $f\wedge g\neq\bot$
for every $f\in\F$ and $g\in\G$. We call
\[
\F^{\#}=\{\ell\in L:\{\ell\}\#\F\}
\]
the \emph{grill of} $\F$. The definitions for families of subsets
of $X$ correspond to the case $L=(\mathbb{P}X,\subset)$.

Though (\ref{eq:centeredconv}) was not included in the pointfree
setting, the notion of adherence allows to recover it.

Let $(L,\lim)$ be an object of $\Cconv$. Given $\G\subset L$ the
\emph{adherence of $\G$ }(called \emph{raw adherence }in \cite{FredetJean})
is by definition
\[
\adh\G=\bigvee_{\mathbb{F}L\ni\F\#\G}\lim\F.
\]
Given $\ell\in L$, we abridge 
\[
\adh\ell:=\adh\{m\in L:\ell\leq m\}.
\]
Denoting $\mathbb{U}L$ the set of maximal filters on $L$ and, given
$\F\in\mathbb{F}L$, denoting $\beta(\F)=\{\U\in\mathbb{U}L:\U\supset\F\}$,
we have \cite{convframes}

\begin{equation}
\adh\F=\bigvee_{\mathbb{F}L\ni\H\#\F}\lim\H=\bigvee_{\mathbb{F}L\ni\H\supset\F}\lim\H=\bigvee_{\U\in\beta(\F)}\lim\U.\label{eq:adh}
\end{equation}

In particular, if $\U\in\mathbb{U}L$ then $\adh\U=\lim\U$ and $\adh\ell=\bigvee_{\ell\in\U\in\mathbb{U}L}\lim\U$.

A $\Cconv$-object $(L,\lim)$ is called \emph{centered }if $\adh\ell\geq\ell$
for every $\ell\in L$. This can be seen as a pointfree abstraction
of the point axiom (\ref{eq:centeredconv}) because an order-preserving
function $\lim:\mathbb{FP}X\to\mathbb{P}X$ satisfies the point axiom
if and only if $(\mathbb{P}X,\lim)$ is centered \cite[Remark 6.16]{FredetJean}.
It turns out that centeredness also corresponds to (\ref{eq:CAPcentered}). 

Objects of the image $V^{(-)}$-$\Cap$ of $\Cap$ in $\Cconv$ find
a simple characterization:
\begin{thm}
\label{cor:Vcapcharacterization}\cite[Corollary 17]{myn.ptfreeAP}
The objects of $V^{(-)}$-$\Cap$ are exactly the convergence frames
$(V^{X},\lim)$ that are $**$-regular and centered.
\end{thm}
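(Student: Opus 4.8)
The plan is to prove the two inclusions of the asserted equality separately; the engine is an explicit description of pseudocomplements in the frame $V^{X}$. Writing $S_{f}=\{x\in X:f(x)>0\}$ for the support of $f\in V^{X}$, one has $f^{*}=\theta_{X\setminus S_{f}}$, hence $f^{**}=\theta_{S_{f}}$; in particular $\theta_{A}^{**}=\theta_{A}$ and $m\leq\theta_{S_{m}}$ for all $A\subset X$ and $m\in V^{X}$. Consequently, if $\F$ is a set-theoretic filter on $X$ and $\theta[\F]$ denotes the order filter of $V^{X}$ generated by $\{\theta_{A}:A\in\F\}$, then $\theta^{-1}(\theta[\F])=\F$, while for any order filter $\G$ on $V^{X}$ the equality of subsets $\{\theta_{S_{m}}:m\in\G\}=\{\theta_{A}:\theta_{A}\in\G\}$ gives $\G^{**}=\theta[\theta^{-1}(\G)]$ and hence also $\theta^{-1}(\G)=\theta^{-1}(\G^{**})$.

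First I would check that every object $V^{(-)}(X,\lambda)=(V^{X},\lim_{\lambda})$ is $**$-regular and centered. For $**$-regularity, combine $\theta^{-1}(\G)=\theta^{-1}(\G^{**})$ with (\ref{eq:limlambda}): $\lim_{\lambda}\G=\lambda(\theta^{-1}\G)=\lambda(\theta^{-1}\G^{**})=\lim_{\lambda}\G^{**}$, which is (\ref{eq:doublestarlim}). For centeredness, fix $\ell\in V^{X}$ and $x\in X$; the case $\ell(x)=0$ is vacuous, and if $\ell(x)>0$ then $g:=\ell\wedge\theta_{\{x\}}$ is nonzero with $S_{g}=\{x\}$, so $\H:=\uparrow g$ is a proper filter for which $\ell\in\H$, $\H\#\uparrow\ell$ and $\theta^{-1}(\H)=\{x\}^{\uparrow}$ (all immediate from $g\leq\ell$ and $S_{g}=\{x\}$); therefore $\adh\ell(x)\geq\lim_{\lambda}\H(x)=\lambda(\{x\}^{\uparrow})(x)=1\geq\ell(x)$ by (\ref{eq:CAPcentered}).

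Conversely, given a $**$-regular, centered convergence frame $(V^{X},\lim)$, I would reconstruct a $\Cap$-structure on $X$ by $\lambda(\F):=\lim(\theta[\F])$ for $\F\in\mathbb{FP}X$. Monotonicity is immediate ($\F\subset\G$ forces $\theta[\F]\subset\theta[\G]$), and $\lim_{\lambda}=\lim$ follows directly from the identities above and $**$-regularity, since $\lim_{\lambda}\G=\lambda(\theta^{-1}\G)=\lim(\theta[\theta^{-1}\G])=\lim(\G^{**})=\lim\G$; thus $(V^{X},\lim)=V^{(-)}(X,\lambda)$. It remains to verify the point axiom (\ref{eq:CAPcentered}). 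Here $\theta[\{x\}^{\uparrow}]=\uparrow\theta_{\{x\}}$, and the key observation is that every proper filter $\H\supset\uparrow\theta_{\{x\}}$ satisfies $\theta^{-1}(\H)=\{x\}^{\uparrow}$ (the principal ultrafilter on $X$ being maximal among proper filters of $\mathbb{P}X$), whence $\H^{**}=\theta[\{x\}^{\uparrow}]=\uparrow\theta_{\{x\}}$ and, by $**$-regularity, $\lim\H=\lim\H^{**}=\lim(\uparrow\theta_{\{x\}})$; applying this to each $\U\in\beta(\uparrow\theta_{\{x\}})$ and using (\ref{eq:adh}) yields $\adh\theta_{\{x\}}=\lim(\uparrow\theta_{\{x\}})$, so that $\lambda(\{x\}^{\uparrow})(x)=\lim(\uparrow\theta_{\{x\}})(x)=\adh\theta_{\{x\}}(x)\geq\theta_{\{x\}}(x)=1$ by centeredness.

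The step I expect to be the genuine obstacle is this last one, namely extracting the point axiom from the abstract centeredness of $\lim$: centeredness only furnishes $\adh\theta_{\{x\}}\geq\theta_{\{x\}}$, whereas $\lim(\uparrow\theta_{\{x\}})$ is a priori merely $\leq\adh\theta_{\{x\}}$, and bridging this gap is exactly where $**$-regularity enters, via the collapse of the limits of all proper refinements of $\uparrow\theta_{\{x\}}$ to that of $\uparrow\theta_{\{x\}}$ itself. That collapse in turn hinges on the support description of $\F^{**}$ in $V^{X}$ and the maximality of $\{x\}^{\uparrow}$ in $\mathbb{P}X$. Everything else is routine filter bookkeeping, and the well-definedness of the reconstruction (that $\lim$ factors through $\theta^{-1}$) is precisely a reformulation of $**$-regularity.
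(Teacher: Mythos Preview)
Your argument is correct. The paper does not itself prove this theorem (it is quoted as \cite[Corollary 17]{myn.ptfreeAP}); the paragraph following the statement merely records the reconstruction $\lambda_{\lim}(\F)=\lim\theta[\F]$ and asserts that $\lim_{\lambda_{\lim}}=\lim$ under centeredness and $**$-regularity, which is exactly the skeleton you have fleshed out via the explicit description $f^{**}=\theta_{S_{f}}$ of pseudocomplements in $V^{X}$ and the collapse $\G^{**}=\theta[\theta^{-1}(\G)]$.
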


In fact, if $(V^{X},\lim)$ is a $\Cconv$-object, then the function
$\lambda_{\lim}:\mathbb{FP}X\to V^{X}$ defined by
\begin{equation}
\lambda_{\lim}(\F)=\lim\theta[\F]\label{eq:lambdalim}
\end{equation}
is monotone and satisfies (\ref{eq:CAPcentered}) and $\lim_{\lambda_{\lim}}=\lim$
when $\lim$ is centered and $**$-regular. 

\subsection{$V^{(-)}$-$\protect\prap$ and $V^{(-)}$-$\protect\ap$ within
$V^{(-)}$-$\protect\Cap$}

Let us denote by $V^{(-)}$-$\prap$ and $V^{(-)}$-$\ap$ the images
of $\prap$ and $\ap$ respectively, under the functor $V^{(-)}$. 

In view of Theorem \ref{cor:Vcapcharacterization} and (\ref{eq:preAP}),
\begin{thm}
\label{thm:VPrap} The objects of $V^{(-)}$-$\prap$ are exactly
the convergence frames $(V^{X},\lim)$ that are $**$-regular, centered,
and satisfy 
\begin{equation}
\lim(\bigcap_{\D\in\mathbb{D}}\D)=\bigwedge_{\D\in\mathbb{D}}\lm\D\label{eq:Vprap}
\end{equation}
for every $\mathbb{D}\subset\mathbb{F}(V^{X})$.
\end{thm}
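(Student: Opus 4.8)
The plan is to push the defining condition (\ref{eq:preAP}) of a pre-approach space back and forth across the correspondence of Theorem~\ref{cor:Vcapcharacterization}, and the whole argument will reduce to two elementary bookkeeping facts: the preimage operation $\theta^{-}(\cdot)$ and the image operation $\theta[\cdot]$ both commute with arbitrary intersections of filters. I would isolate these first. For any family $\mathbb{E}\subset\mathbb{F}(V^{X})$,
\[
\theta^{-}\Big(\bigcap_{\mathcal{E}\in\mathbb{E}}\mathcal{E}\Big)=\bigcap_{\mathcal{E}\in\mathbb{E}}\theta^{-}(\mathcal{E}),
\]
simply because $A\in\theta^{-}(\mathcal{E})$ means by definition $\theta_{A}\in\mathcal{E}$; moreover each $\theta^{-}(\mathcal{E})$ is a genuine set-theoretic filter on $X$ (it contains $X$ since $\theta_{X}=\mathbf{1}$, it is isotone since $\theta$ is, and $\theta_{A}\wedge\theta_{B}=\theta_{A\cap B}$ gives closure under finite intersections). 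Dually, for any family $\mathbb{D}\subset\mathbb{F}X$,
\[
\theta\Big[\bigcap_{\D\in\mathbb{D}}\D\Big]=\bigcap_{\D\in\mathbb{D}}\theta[\D]
\]
in $\mathbb{F}(V^{X})$: the inclusion $\subseteq$ is monotonicity of $\F\mapsto\theta[\F]$, and for $\supseteq$ one uses that $\theta_{A}\leq f$ is equivalent to $A\subset\{f=1\}$, so $f\in\bigcap_{\D}\theta[\D]$ forces $\{f=1\}\in\D$ for every $\D$, hence $\{f=1\}\in\bigcap_{\D}\D$ and $f\in\theta[\bigcap_{\D}\D]$.

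For the forward implication, let $(V^{X},\lim)\in V^{(-)}$-$\prap$, say $(V^{X},\lim)=V^{(-)}(X,\lambda)$ for a pre-approach space $(X,\lambda)$; by Proposition~\ref{prop:Vfunctor} this means $\lim=\lim_{\lambda}$, and by Theorem~\ref{cor:Vcapcharacterization} the frame is $**$-regular and centered, so only (\ref{eq:Vprap}) remains to be checked. Given $\mathbb{E}\subset\mathbb{F}(V^{X})$, I combine (\ref{eq:limlambda}), the first bookkeeping fact, the identity (\ref{eq:preAP}) applied to the subfamily $\{\theta^{-}(\mathcal{E}):\mathcal{E}\in\mathbb{E}\}\subset\mathbb{F}X$, and (\ref{eq:limlambda}) once more:
\[
\lim\Big(\bigcap_{\mathcal{E}\in\mathbb{E}}\mathcal{E}\Big)=\lambda\Big(\theta^{-}\big(\bigcap_{\mathcal{E}\in\mathbb{E}}\mathcal{E}\big)\Big)=\lambda\Big(\bigcap_{\mathcal{E}\in\mathbb{E}}\theta^{-}(\mathcal{E})\Big)=\bigwedge_{\mathcal{E}\in\mathbb{E}}\lambda\big(\theta^{-}(\mathcal{E})\big)=\bigwedge_{\mathcal{E}\in\mathbb{E}}\lim\mathcal{E}.
\]

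Conversely, let $(V^{X},\lim)$ be a $**$-regular centered convergence frame satisfying (\ref{eq:Vprap}). By Theorem~\ref{cor:Vcapcharacterization} (and the accompanying remark that $\lambda_{\lim}$ is a $\Cap$-structure with $\lim_{\lambda_{\lim}}=\lim$ once $\lim$ is $**$-regular and centered), $(V^{X},\lim)=V^{(-)}(X,\lambda)$ with $\lambda=\lambda_{\lim}$ and $\lim_{\lambda}=\lim$, so it suffices to verify (\ref{eq:preAP}) for $\lambda$. Given $\mathbb{D}\subset\mathbb{F}X$, I combine (\ref{eq:lambdalim}), the second bookkeeping fact, the hypothesis (\ref{eq:Vprap}), and (\ref{eq:lambdalim}) again:
\[
\lambda\Big(\bigcap_{\D\in\mathbb{D}}\D\Big)=\lim\theta\Big[\bigcap_{\D\in\mathbb{D}}\D\Big]=\lim\Big(\bigcap_{\D\in\mathbb{D}}\theta[\D]\Big)=\bigwedge_{\D\in\mathbb{D}}\lim\theta[\D]=\bigwedge_{\D\in\mathbb{D}}\lambda\D,
\]
so $(X,\lambda)\in\prap$ and hence $(V^{X},\lim)\in V^{(-)}$-$\prap$.

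The one step that is not purely formal — and the place where I expect a careful reader to pause — is the $\supseteq$ half of the second bookkeeping fact: since $\F\mapsto\theta[\F]$ is a priori only monotone, $\theta[\bigcap_{\D}\D]\subseteq\bigcap_{\D}\theta[\D]$ is free, but the reverse inclusion genuinely needs the special shape of the generators $\theta_{A}$, namely the equivalence $\theta_{A}\leq f\iff A\subset\{f=1\}$. It is worth noting that $**$-regularity and centeredness are \emph{not} used directly in either computation; they enter only through Theorem~\ref{cor:Vcapcharacterization}, which is what pins down the frames of the form $V^{(-)}(X,\lambda)$ and guarantees $\lim_{\lambda_{\lim}}=\lim$.
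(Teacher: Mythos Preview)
Your proof is correct and follows essentially the same route as the paper's own argument: both directions hinge on exactly the two ``bookkeeping'' identities $\theta^{-}(\bigcap\mathcal{E})=\bigcap\theta^{-}(\mathcal{E})$ and $\theta[\bigcap\D]=\bigcap\theta[\D]$, combined with (\ref{eq:preAP}) on one side and (\ref{eq:Vprap}) on the other. The paper states these identities without justification, whereas you spell out why each holds (in particular the non-automatic $\supseteq$ direction for the image filter via $\theta_{A}\leq f\iff A\subset\{f=1\}$), so your write-up is, if anything, slightly more complete.
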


\begin{proof}
If $(X,\lambda)$ is an object of $\prap$ then $\lm_{\lambda}\F=\lambda(\theta^{-}(\F))$
satisfies (\ref{eq:Vprap}), because $\theta^{-}(\bigcap_{\D\in\mathbb{D}}\D)=\bigcap_{\D\in\mathbb{D}}\theta^{-}(\D)$
so that 
\[
\lm_{\lambda}(\bigcap_{\D\in\mathbb{D}}\D)=\lambda(\bigcap_{\D\in\mathbb{D}}\theta^{-}(\D))\underset{(\ref{eq:preAP})}{=}\bigwedge\lambda(\theta^{-}(\D))=\bigwedge_{\D\in\mathbb{D}}\lm_{\lambda}\D.
\]
Conversely, if $(V^{X},\lim)$ is $**$-regular, centered and satisfies
(\ref{eq:Vprap}), then $\lim=\lim_{\lambda_{\lim}}$ and 
\[
\lambda_{\lim}(\bigcap_{\D\in\mathbb{D}}\D)=\lim\theta[\bigcap_{\D\in\mathbb{D}}\D]=\lim\bigcap_{\D\in\mathbb{D}}\theta[\D]\underset{(\ref{eq:Vprap})}{=}\bigwedge_{\D\in\mathbb{D}}\lim\theta[\D]=\bigwedge_{\D\in\mathbb{D}}\lambda_{\lim}\D.
\]
\end{proof}
Since $\adh\ell\in V^{X}$ for every $\ell\in V^{X}$, we can consider
the condition $\adh\ell\in\cont(X,V)$ in terms of the structure on
$X$, which corresponds to $V^{(-)}\adh\ell$ continuous in the $\Cconv$
sense. 

In the case where $(X,\lambda)$ is a $\Cap$-space, any $f\in V^{X}$
can be tested for $f\in\cont(X,V)$ via e.g., (\ref{eq:contXtoV}),
equivalently, $V^{(-)}f:V^{V}\to V^{X}$ is a $\Cconv$-morphism,
that is, for every $\G\in\mathbb{F}V^{X}$,
\[
\lm_{\lambda}\G\leq V^{(-)}f(\lm_{\lambda_{V}}(V^{(-)}f)^{-}(\G)).
\]
Since 
\[
(V^{(-)}f)^{-}(\G)=\{m\in V^{V}:m\circ f\in\G\}
\]
and $\lim_{\lambda_{V}}\F=\lambda_{V}(\theta^{-}(\F))=\lambda_{V}(\{A\subset V:\theta_{A}\in\F\})$,
this means
\[
\lm_{\lambda}\G\leq\lambda_{V}\left(\{A\subset V:\theta_{A}\circ f\in\G\}\right)\circ f.
\]
Moreover, $\theta_{A}\circ f\in\G$ if and only if there is $g\in\G$
with $f^{-1}(V\setminus A)\subset g^{-1}(0)$. Let 
\begin{eqnarray}
f^{\star}\G & = & \{A\subset V:\theta_{A}\circ f\in\G\}\label{eq:fstarG}\\
 & =\bigcup_{g\in\G} & \{A\subset V:f^{-1}(V\setminus A)\subset g^{-1}(0)\}.\nonumber 
\end{eqnarray}

As 
\[
\lambda_{V}(\F)(v)=v\oslash\limsup_{\F}\id=\bigwedge_{A\#\F}v\oslash\bigwedge A,
\]
this means that $f\in\cont(X,V)$ when 
\[
\lm_{\lambda}\G(\cdot)\leq f(\cdot)\oslash\limsup_{f^{\star}\G}\id_{V}
\]
for every $\G\in\mathbb{F}V^{X}$. 

Moreover, by \cite[(5.12)]{myn.ptfreeAP}, if $(X,\lambda)$ is a
convergence approach space and $f\in V^{X}$, then
\begin{equation}
\adh_{\lim_{\lambda}}f=\adh_{\lambda}\{f>0\},\label{eq:adhVXtoX}
\end{equation}
where $\{f>0\}:=\{x\in X:f(x)>0\}.$

As a result,
\begin{thm}
\label{thm:VAp} The objects of $V^{(-)}$-$\ap$ are exactly the
objects $(V^{X},\lim)$ of $V^{(-)}$-$\prap$\emph{ (}$**$-regular
centered convergence frame structures satisfying \emph{(\ref{eq:Vprap}))}
that also satisfy
\begin{equation}
\lm\G(\cdot)\leq\adh f(\cdot)\oslash\limsup_{(\adh f)^{\star}\G}\id_{V}\label{eq:VAp}
\end{equation}
for every $f\in V^{X}$ and $\G\in\mathbb{F}(V^{X})$. 
\end{thm}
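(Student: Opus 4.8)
The plan is to combine the characterization of $V^{(-)}$-$\prap$ from Theorem \ref{thm:VPrap} with the characterization of approach spaces among pre-approach spaces given by Corollary \ref{cor:approachadh}, translated into the pointfree language. Recall that by Corollary \ref{cor:approachadh}, a pre-approach space $(X,\lambda)$ is an approach space if and only if $\adh_{\lambda}A \in \cont(X,V)$ for every $A \subset X$; and by Theorem \ref{prop:adhcont} this is exactly condition (\ref{eq:diagonaladh}). So on the $\Cap$ side the extra requirement beyond pre-approach is precisely ``$\adh_{\lambda}A$ is a contraction for all $A\subset X$''. The point is to express this on the $\Cconv$ side. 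Since $V^{(-)}$ is an embedding, $(V^X,\lim)$ lies in $V^{(-)}$-$\ap$ iff it lies in $V^{(-)}$-$\prap$ (which is Theorem \ref{thm:VPrap}) and the corresponding $\lambda = \lambda_{\lim}$ satisfies $\adh_{\lambda}A\in\cont(X,V)$ for all $A\subset X$.

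First I would recall that for $f\in V^X$, continuity $f\in\cont(X,V)$ is equivalent, via the discussion preceding the theorem, to
\[
\lm_{\lambda}\G(\cdot)\leq f(\cdot)\oslash\limsup_{f^{\star}\G}\id_{V}
\]
for every $\G\in\mathbb{F}V^X$, where $f^{\star}\G$ is defined in (\ref{eq:fstarG}); this is simply the unwinding of the requirement that $V^{(-)}f$ be a $\Cconv$-morphism into $(V^V,\lim_{\lambda_V})$, using the explicit form (\ref{eq:lambdaV}) of $\lambda_V$. Applying this to $f=\adh_{\lambda_{\lim}}A = \adh_{\lim}\theta_A$ gives the condition (\ref{eq:VAp}) but only for $f=\adh\ell$ with $\ell=\theta_A$ an indicator. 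The remaining task is to see that it suffices to test this for arbitrary $f\in V^X$ in place of $\adh A$, i.e.\ that the family $\{(\adh f)^{\star}\G\}$ ranging over all $f\in V^X$ gives nothing beyond what $\{(\adh\theta_A)^{\star}\G\}$ gives, and conversely that (\ref{eq:VAp}) as stated (for all $f\in V^X$) is actually implied by the pre-approach axioms once it holds for indicators.

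The key technical bridge is the identity (\ref{eq:adhVXtoX}): $\adh_{\lim_{\lambda}}f = \adh_{\lambda}\{f>0\}$. This says the raw adherence of any $f\in V^X$ coincides with the (principal) adherence function of the subset $\{f>0\}$ of $X$, which in a pre-approach space we also know equals $\adh_{\lambda}\{f>0\} = \bigwedge_{B\#\{f>0\}}\adh B$ and moreover $\cont(\adh_{\lambda}\{f>0\}) = \cl_{\lambda}\{f>0\}$ by Theorem \ref{thm:adhcont}. In particular, the set of functions of the form $\adh_{\lim}f$, $f\in V^X$, is exactly the set of functions $\adh_{\lambda}A$, $A\subset X$ (take $A=\{f>0\}$, and conversely $\adh_{\lambda}A = \adh_{\lim}\theta_A$). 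Hence the condition ``$\adh_{\lim}f\in\cont(X,V)$ for every $f\in V^X$'' is literally the same as ``$\adh_{\lambda}A\in\cont(X,V)$ for every $A\subset X$'', which by Corollary \ref{cor:approachadh} characterizes approach spaces among pre-approach spaces. Unwinding ``$\adh_{\lim}f\in\cont(X,V)$'' through the continuity criterion displayed above (now with $\adh f$ in the role of the test function) yields exactly (\ref{eq:VAp}). Thus the proof is essentially: (i) quote Theorem \ref{thm:VPrap} for the $V^{(-)}$-$\prap$ part; (ii) observe via (\ref{eq:adhVXtoX}) that $\{\adh_{\lim}f : f\in V^X\} = \{\adh_{\lambda}A : A\subset X\}$; (iii) apply Corollary \ref{cor:approachadh}; (iv) translate $\adh_{\lim}f\in\cont(X,V)$ into (\ref{eq:VAp}) using (\ref{eq:fstarG}) and (\ref{eq:lambdaV}).

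The main obstacle I anticipate is bookkeeping rather than conceptual: one must be careful that $\lambda_{\lim} = \lambda$ genuinely recovers the $\Cap$-structure (this needs $**$-regularity and centeredness, already assumed via membership in $V^{(-)}$-$\prap$), and that the ``$\star$'' construction (\ref{eq:fstarG}) applied to $\adh f$ correctly encodes the image filter $(\adh f)[\G]$ meshing condition appearing in the definition of $\lambda_V$. A secondary subtlety is making sure the quantifier in (\ref{eq:VAp}) over \emph{all} $f\in V^X$ is not stronger than needed: here one uses that $\adh_{\lim}f$ depends only on $\{f>0\}$, so ranging $f$ over all of $V^X$ produces the same family of test functions as ranging over indicators $\theta_A$, and therefore (\ref{eq:VAp}) for all $f$ is equivalent to it for indicators, which is the ``$\adh A\in\cont(X,V)$ for all $A$'' condition of Corollary \ref{cor:approachadh}. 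Once these identifications are in place the equivalence is immediate.
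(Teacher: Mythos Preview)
Your proposal is correct and follows essentially the same route as the paper's own proof: invoke Theorem~\ref{thm:VPrap} for the $V^{(-)}$-$\prap$ part, use (\ref{eq:adhVXtoX}) to identify $\{\adh_{\lim}f:f\in V^{X}\}$ with $\{\adh_{\lambda}A:A\subset X\}$, apply Corollary~\ref{cor:approachadh}, and read off (\ref{eq:VAp}) as the $\Cconv$-translation of $\adh f\in\cont(X,V)$. Your discussion of the bookkeeping issues (that $\lambda_{\lim}$ recovers $\lambda$ under $**$-regularity and centeredness, and that quantifying over all $f\in V^{X}$ is no stronger than over indicators) is accurate and matches the implicit reasoning in the paper.
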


\begin{proof}
If $(X,\lambda)$ is an approach space then $\lim_{\lambda}$ is in
particular in $V^{(-)}$-$\prap$ and Theorem \ref{thm:VPrap} applies.
Moreover, if $f\in V^{X}$ then $\adh_{\lim_{\lambda}}f=\adh_{\lambda}\{f>0\}\in\cont(X,V)$
by Corollary \ref{cor:approachadh}. In view of the above discussion,
\[
\lm_{\lambda}\G(\cdot)\leq\adh_{\lim_{\lambda}}f(\cdot)\oslash\limsup_{(\adh_{\lim_{\lambda}}f)^{\star}\G}\id_{V}.
\]

Conversely, if $(V^{X},\lim)$ is an object of $V^{(-)}$-$\prap$,
then $\lim=\lim_{\lambda_{\lim}}$ where $\lambda_{\lim}(\F)=\lim\theta[\F]$
is a pre-approach space. If moreover (\ref{eq:VAp}) is satisfied,
then for every $A\subset X$, $\adh_{\lambda_{\lim}}A=\adh_{\lim}\theta_{A}$
by (\ref{eq:adhVXtoX}) and (\ref{eq:VAp}) ensures that $\adh_{\lambda_{\lim}}A\in\cont((X,\lambda_{\lim}),V)$
so that $(X,\lambda_{\lim})$ is an approach space by Corollary \ref{cor:approachadh}.
\end{proof}
Recall from \cite{myn.ptfreeAP,convframes} that an element $\ell$
of a convergence frame is \emph{closed }if 
\[
\ell\in\F\then\lim\F\leq\ell
\]
for every $\F\in\mathbb{F}L$. \cite[Corollary 19]{myn.ptfreeAP}
states that the closed elements of a convergence frame $(V^{X},\lim_{\lambda})$
where $(X,\lambda)$ is a convergence approach space are exactly the
functions $\theta_{A}$ where $A$ is a $r(\lambda)$-closed subset
of $X$. Hence, in view of Theorem \ref{thm:adhcont}, closed elements
of $(V^{X},\lim_{\lambda})$ are elements of $\cont(X,V)$, but not
conversely, as we have seen with Example \ref{exa:closurenonindicator}.
\bibliographystyle{plain}

\end{document}